\providecommand{\MR}{\relax\ifhmode\unskip\space\fi MR }
\providecommand{\href}[2]{#2}
\newtheorem{theorem}{Theorem}%[section]
\newtheorem{lemma}{Lemma}
\newtheorem{corollary}{Corollary}
\newtheorem{proposition}{Proposition}
\newtheorem{definition}{Definition}
\newtheorem*{assumption}{Assumption}
\theoremstyle{remark}
\newtheorem{remark}[theorem]{Remark}
\newtheorem{example}{Example}
 \def\beqlb{\begin{eqnarray}}\def\eeqlb{\end{eqnarray}}
 \def\beqnn{\begin{eqnarray*}}\def\eeqnn{\end{eqnarray*}}
 \def\qed{\hfill$\Box$\medskip}
\def\N{\mathbb{N}}
\def\R{\mathbb{R}}
\def\E{\mathbb{E}}
\renewcommand{\Phi}{\varPhi}
\renewcommand{\epsilon}{\varepsilon}
\renewcommand{\limsup}{\varlimsup}
\renewcommand{\liminf}{\varliminf}
\definecolor{mygray}{gray}{0.9}
\definecolor{deeppink}{RGB}{255,20,147}
\definecolor{mygreen}{rgb}{0.05, 0.576, 0.03}
\definecolor{myred}{rgb}{0.768, 0.09, 0.09}
\newcommand{\blue}{\color{blue}}
\long\def\symbolfootnote[#1]#2{\begingroup
\def\thefootnote{\fnsymbol{footnote}}\footnote[#1]{#2}\endgroup}
\newcommand{\ddr}{\mathrm{d}}
\begin{document}

\title{\bf Weak convergence of continuous-state branching processes with large immigration}
%\\
%DRAFT}
%\runningtitle[Functional limit theorems of CBIs]
\author{Cl\'ement Foucart\thanks{CMAP, Ecole Polytechnique, IP Paris, Palaiseau, France} \thanks{LAGA, Université Sorbonne Paris Nord, Villetaneuse, France \\
 \hspace*{1.5em} Email:  \href{mailto:clement.foucart@polytechnique.edu}{clement.foucart@polytechnique.edu}}\,\, and  Linglong Yuan \thanks{Department of Mathematical Sciences, University of Liverpool, Liverpool, UK\\
 \hspace*{1.5em} Email: \href{linglong.yuan@liverpool.ac.uk}{linglong.yuan@liverpool.ac.uk}}}

%\author[1]{Cl\'ement Foucart}
%\author[2]{Linglong Yuan}
%\affil[1]{CMAP \text{ and } LAGA}
%\affil[2]{University of Liverpool}
%\affil[2]{University of Liverpool}
\maketitle

\begin{abstract} 
Functional limit theorems are established for continuous-state branching processes with immigration (CBIs), where the reproduction laws have finite first moments and the immigration laws exhibit large tails. Different regimes of immigration are identified, leading to limiting processes that are either subordinators, CBIs, extremal processes, or extremal shot noise processes.
%Functional limit theorems are established for continuous-state branching processes with immigration (CBIs) whose reproduction laws have finite mean and immigration laws have large tails. Different regimes of immigration are identified for which the limiting processes are either subordinators, CBIs, extremal processes or extremal shot noise processes.
\end{abstract}
 \vspace{9pt} \noindent {\bf Key words.}
{Continuous-state branching process}, {Immigration}, {Functional limit theorem}, {Subordinator}, {Extremal process}, {Shot noise process}, {Regularly varying function}.

\noindent\textit{MSC (2020):} primary 60J80; 60G70 secondary 92D15; 92D25
%60H30, 92D15, 92D25.

%\tableofcontents
\section{Introduction and main results}\label{CBI}
The objective of the article is to establish some functional limit theorems for continuous-state branching processes with immigration (CBIs).  Those positive Markov processes have been defined in the seventies by Kawazu and Watanabe \cite{KAW} and can be seen as analogues of Galton-Watson processes with immigration (GWIs) in continuous time and space. 

CBI processes are characterised in law by two Lévy-Khintchine functions $\Psi$ and $\Phi$. The branching mechanism $\Psi$ is the Laplace exponent of a spectrally positive Lévy process and governs the dynamics of reproduction in the population. The mechanism of immigration $\Phi$ is the Laplace exponent of a subordinator (i.e.\ an increasing Lévy process). More background on CBIs and recalls on their semigroup, infinitesimal generator and their Poissonian construction, are provided in Section \ref{sec:backgroundCBI}.

The question of how to renormalise, possibly in a nonlinear way, the one-dimensional marginals of CBI processes has been addressed in  Foucart et al. \cite{zbMATH07549543}. We refer also to the references therein for previous works on this topic, see in particular Pakes \cite{Pakes} and Pinsky \cite{Pinsky}.  Recently, a functional limit theorem for GWIs has been established by Iksanov and Kabluchko, see \cite{IKSANOV2018291}. They found an interesting regime of immigration for which the limiting process arising is an \textit{extremal shot noise} process. The latter is a generalisation of the extremal processes (i.e.\ the successive records of a Poisson point process) which incorporates a drift mechanism. Among other things, we will establish the analogue statement as \cite[Theorem 1.1]{IKSANOV2018291} for CBIs. Our proof is based  on different arguments and enables us to address other regimes as well.

Extremal shot noise processes (ESNs for short) have been studied by Dombry \cite{Dombry} in the setting of random fields. They have been then explored from the perspective of Markov processes in \cite{FoucartYuanESN23}. Their definition is recalled later in this section and more background on them is provided in Section \ref{sec:recallESN}.

%We focus in this article on the case where the branching dynamics has finite mean, i.e.\ $\Psi'(0+)>-\infty$. Background on CBIs and recalls on their semigroup, infinitesimal generator and their Poisson construction, are provided in the forthcoming Section \ref{secCBI}. 

In this article, we assume that the branching dynamics has a finite mean, i.e.\ $\Psi'(0+)>-\infty$ and we consider mainly immigration with infinite mean, that is to say $\Phi'(0+)=\infty$, or equivalently $\int_{1}^{\infty}h\nu(\ddr h)=\infty$, with $\nu$ the L\'evy measure associated with $\Phi$. Throughout the article, we refer to $\nu$ as the \textit{immigration measure}. Recall that it satisfies L\'evy's integrability condition,  $\int_0^\infty 1\wedge h\, \nu(\ddr h)<\infty$.

We further distinguish two regimes of immigration. The immigration measure $\nu$ is said to have a log moment when $\int_1^{\infty} \ln h \, \nu(\ddr h)<\infty$ and no log moment if this integral is infinite. As a matter of fact, the following equivalence holds and will play a role hereafter, see e.g.\ \cite[Remark 7]{zbMATH07549543}: 
\begin{equation}\label{logmomentwithphi}
    \int_1^{\infty} \ln h \, \nu(\ddr h)=\infty \text{ if and only if } \int_0^{\theta} \frac{\Phi(x)}{x}\ddr x=\infty \text{ for some } \theta>0.
\end{equation}
%The latter integral is finite for instance when $\Phi(x)\sim x^{\beta}$ as $x$ goes to $0$ for some $\beta \in (0,1]$. 
 
%Background on CBIs and recalls on their semigroup, infinitesimal generator and their Poisson construction, are provided in the forthcoming Section \ref{secCBI}. 
%\smallskip
As observed in \cite{Pinsky} and \cite[Theorems 1 and 2]{zbMATH07549543}, see also Keller-Ressel and Mijatovic \cite[Theorem 2.6]{KELLERRESSEL20122329}, when the mechanisms of the CBI satisfy the integrability condition $\int_{0}^{\theta}\frac{\Phi(x)}{|\Psi(x)|}\ddr x<\infty$ for some\footnotemark
\footnotetext{and then for all small $\theta>0$} 
 $\theta>0$,  the immigration and the branching dynamics align in some sense. Indeed, in this case, when $\Psi$ is (sub)-critical, i.e. $\Psi'(0+)\geq 0$, the process is positive recurrent. Conversely, when $\Psi$ is supercritical, i.e. $\Psi'(0+)<0$,  the CBI process grows towards $\infty$ along the same rate as the pure branching process with mechanism $\Psi$ on its event of non-extinction. 
\medskip

On the contrary, when for some $\theta>0$, $\int_0^\theta\frac{\Phi(x)}{|\Psi(x)|}\ddr x=\infty$, the immigration becomes highly active and can dominate the branching dynamics. 
As first noticed in \cite{Pinsky}, and further explored in \cite{zbMATH07549543}, the extent of this ``overtaking" actually depends on how fast the integral $\int_\epsilon^\theta \frac{\Phi(x)}{|\Psi(x)|}\ddr x$ tends to $\infty$ as $\epsilon \rightarrow 0$. We will focus on this setting.
%$\int_0^\theta\frac{\Phi(x)}{|\Psi(x)|}\ddr x=\infty$ for some $\theta>0$.
%Notice that $\int^\infty \ln h \,\nu(\ddr h)<\infty$ if and only if $\int_0 \frac{\Phi(x)}{x}\ddr x<\infty$, see e.g.\ \cite[Remark 7]{zbMATH07549543}. The latter integral is finite for instance when $\Phi(x)\sim x^{\beta}$ as $x$ goes to $0$ for some $\beta \in (0,1]$. 
%Note also that with the assumption $\Psi'(0+)>-\infty$, one has $\sup_{x\in [0,1]}|\Psi(x)|/x<\infty$ and if $\int_0 \frac{\Phi(x)}{x}\ddr u=\infty$ (i.e. $\nu$ has no log-moment) then $\int_0^1 \frac{\Phi(x)}{|\Psi(x)|}\ddr x=\infty$. It is 
Notice that when $\Psi$ is non-critical i.e.\ $\Psi'(0+)\neq 0$, plainly by \eqref{logmomentwithphi}, one has the equivalence: $\int_0^{\theta} \frac{\Phi(x)}{|\Psi(x)|}\ddr x=\infty$ for some $\theta>0$, if and only if $\nu$ has no log moment. We stress moreover that only in the critical case, i.e.\ $\Psi'(0+)=0$, the immigration measure can have a log moment and a mechanism $\Phi$ that satisfies $\int_0^{\theta} \frac{\Phi(x)}{|\Psi(x)|}\ddr x=\infty$. 
%The limiting processes that arise will be of four types: (1) subordinators or (2) CBIs (in the case of (1) and (2), $\nu$ has a log-moment) and (3) extremal processes or (4) extremal shot noise processes (ESNs) (in the  case of (3) and (4), $\nu$ has no log-moment). ESNs are defined below and more background on them is provided in Section \ref{sec:recallESN}.
\medskip

The limiting processes that arise in the functional limits fall into four categories. When $\nu$ has a log moment, the processes obtained at the limit are  subordinators or CBIs. Typically, when $\nu$ does not have a log moment, extremal processes or extremal shot noise processes emerge.  
%ESNs have been studied in  Foucart and Yuan \cite{FoucartYuanESN23} from the perspective of Markov processes. See also Dombry \cite{Dombry} where they have been studied in the setting of random fields. We recall their definition below and more background on them is provided in Section \ref{sec:recallESN}.
%such that $\Psi'(0+)>-\infty$, since one has $\sup_{u\in [0,1]}|\Psi(u)|/u<\infty$. Notice also that $\int_0^1\frac{\Phi(u)}{|\Psi(u)|}\ddr u=\infty$ is equivalent to $\int_0 \frac{\Phi(u)}{u}\ddr u=\infty$ in the non-critical branching case, i.e. $\Psi'(0+)\neq 0$.
%since one has $\sup_{u\in [0,1]}|\Psi(u)|/u<\infty$.
%and establish two functional limits. 
%in which weak after renormalization towards different processes occur. 

%the latter implication is actually an equivalence.
\smallskip

\textbf{Notation.} We denote by $\mathbb{D}$ the Skorokhod space, that is the space of c\`adl\`ag functions from $\mathbb{R}_{+}:=[0,\infty)$ to $\mathbb{R}_{+}$ equipped with the Skorokhod distance. We refer to Billingsley's book \cite{zbMATH01354815} for those notions. The weak convergence in the Skorokhod sense is denoted by $\Longrightarrow$. The convergence in the sense of the finite-dimensional distributions is denoted by $\overset{\text{f.d.d.}}{\Longrightarrow}$. For any Borel measure $m$ on $(0,\infty)$, we denote its tail by $\bar{m}(x):=m\left((x,\infty)\right)$ for all $x>0$. We take the convention $1/0=\infty$ and $1/\infty =0$. For all real numbers $a,b$, we set $a\vee b:=\max\{a,b\}$ and $a\wedge b:=\min\{a,b\}$. For any injective function $f$, we denote by $f^{-1}$ its inverse function. For any function $f$, let $\|f\|$ denote the uniform norm of $f$. 
The space $C^\infty$ denotes the functions that have derivatives of all orders, all continuous.  Last, for any positive function $f$, we write $\int_0 f(x)\ddr x<\infty$ when $f$ is integrable on an interval $(0,\theta)$ for any small enough $\theta>0$. Similarly  $\int^\infty f(x)\ddr x<\infty$ means that $f$ is integrable on $(\theta,\infty)$ for any large enough $\theta>0$.
\medskip

%denote by $\mu$ the Borel measure on $(0,\infty)$ with tail $\bar{\mu}(x)=1/x$ and 
Recall that a function $\varphi$ is said to be regularly varying at $\infty$ (resp. at $0$) if $\varphi(\lambda x)/\varphi(x)\rightarrow \lambda^{\alpha}$ as $x$ goes to $\infty$ (resp. at $0$) for any $\lambda>0$ and some index $\alpha\in \mathbb{R}$. When $\alpha=0$, the function $\varphi$ is slowly varying. If $\varphi$ is strictly increasing, regularly varying with index $\alpha>0$, $\varphi^{-1}$ is regularly varying with index $1/\alpha$. Regularly varying functions and their fundamental properties, such as Karamata's representation theorem and the uniform convergence theorem, will play an important role in this work. We refer the reader to Bingham et al. \cite{regularvariation} for those notions.

%%In the critical branching case, i.e. when $\Psi'(0+)=0$, the function $\Psi$ can be of different order at $0$. 

\medskip 

We state first a proposition focusing on regularly varying mechanisms. In this context, the branching mechanism $\Psi$ is critical, the CBI processes can be linearly renormalised and their limiting processes are either subordinators or CBIs.
\newpage
\begin{proposition}\label{thm2} Let $(Y_t,t\geq 0)$ be a $\mathrm{CBI}(\Psi,\Phi)$ whose mechanisms satisfy
\begin{center}
$\Psi(x)\underset{x\rightarrow 0}{\sim} dx^{1+\alpha}\ell(x)$ and $\Phi(x)\underset{x\rightarrow 0}{\sim}d'x^{\beta}\ell(x)$, 
\end{center} 
for some function $\ell$, slowly varying at $0$, $d\geq 0,d'>0$ and $\alpha,\beta\in (0,1]$ such that $\beta\leq \alpha$. 
\smallskip

Then
\begin{equation}
\label{linearFLT} \big(\Phi^{-1}(1/t)Y_{st},s\geq 0\big)\underset{t\rightarrow \infty}{\Longrightarrow} (X_s,s\geq 0),\end{equation}
where $(X_s,s\geq 0)$ is a CBI$(\bar{\Psi},\bar{\Phi})$ started from $0$ with 
\[\begin{cases}
& \bar{\Psi}(x):=\frac{d}{d'}x^{1+\alpha} \text{ and } \bar{\Phi}(x):=x^{\alpha} \text{ if } \beta=\alpha,\\
& \bar{\Psi}(x):\equiv 0 \text{ and } \bar{\Phi}(x):=x^{\beta} \text{ if } \beta<\alpha.
\end{cases}\]
%\begin{itemize}
%\item[(i)] If $\beta<\alpha$ then
%\[(t^{-1/\beta}Y_{st},s\geq 0)\underset{t\rightarrow \infty}{\Longrightarrow} (V_s,s\geq 0)\]
%where $(V_s,s\geq 0)$ is a subordinator started from $0$ with Laplace exponent $\Phi_V(x)=d' x^{\beta}$.
%\item[(ii)] If $\beta=\alpha$ then
%\[(t^{-1/\beta}Y_{st},s\geq 0)\underset{t\rightarrow \infty}{\Longrightarrow} (X_s,s\geq 0)\]
%where $(X_s,s\geq 0)$ is a CBI$(\bar{\Psi},\bar{\Phi})$ started from $0$ with 
%$\bar{\Psi}(x)=dx^{\alpha+1}$ and $\bar{\Phi}(x)=d'x^{\alpha}$.
%\item[(iii)] If $\beta>\alpha$ then
%\[(t^{-1/\beta}Y_{st},s\geq 0)\underset{t\rightarrow \infty}{\Longrightarrow} \mathbf{0}\]
%where $\mathbf{0}$ is the null process.
%\end{itemize}
\end{proposition}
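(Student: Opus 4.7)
My plan is to observe that the rescaled process $\tilde Y^{(t)}_s := c_t Y_{st}$, with $c_t := \Phi^{-1}(1/t)\downarrow 0$, is itself a CBI whose branching/immigration mechanisms can be written down explicitly, and to show that these rescaled mechanisms converge to the claimed pair $(\bar\Psi,\bar\Phi)$. A standard continuity theorem for CBIs with respect to their characteristics will then deliver the Skorokhod convergence.

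\textbf{Step 1: the rescaled process is a CBI.} Starting from the Laplace representation $\E_y[e^{-\lambda Y_t}]=\exp(-yv_t(\lambda)-\int_0^t\Phi(v_r(\lambda))\,\ddr r)$ with $\partial_t v_t=-\Psi(v_t)$, $v_0=\lambda$, I would set $\tilde v^{(t)}_s(\lambda):=c_t^{-1}v_{st}(\lambda c_t)$ and change variable $r=r't$ in the integral to get
\[
\E_y\!\left[e^{-\lambda\tilde Y^{(t)}_s}\right]=\exp\!\left(-(c_t y)\,\tilde v^{(t)}_s(\lambda)-\int_0^s\tilde\Phi^{(t)}(\tilde v^{(t)}_{r}(\lambda))\,\ddr r\right),
\]
where $\tilde v^{(t)}_s$ solves $\partial_s\tilde v^{(t)}_s=-\tilde\Psi^{(t)}(\tilde v^{(t)}_s)$, $\tilde v^{(t)}_0(\lambda)=\lambda$, with
\[
\tilde\Psi^{(t)}(x):=c_t^{-1}t\,\Psi(c_t x),\qquad \tilde\Phi^{(t)}(x):=t\,\Phi(c_t x).
\]
Thus $(\tilde Y^{(t)}_s)_{s\ge 0}$ is a $\mathrm{CBI}(\tilde\Psi^{(t)},\tilde\Phi^{(t)})$ issued from $c_t y\to 0$.

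\textbf{Step 2: asymptotics of $\tilde\Psi^{(t)}$ and $\tilde\Phi^{(t)}$.} By construction $t\Phi(c_t)=1$, so $d'\,t c_t^\beta\ell(c_t)\to 1$. Karamata's uniform convergence theorem gives $\ell(c_t x)/\ell(c_t)\to 1$ locally uniformly in $x>0$, which yields $\tilde\Phi^{(t)}(x)\to x^\beta=\bar\Phi(x)$ locally uniformly. Similarly,
\[
\tilde\Psi^{(t)}(x)\sim d\,t c_t^\alpha\ell(c_t)\,x^{1+\alpha}=\tfrac{d}{d'}\,c_t^{\alpha-\beta}\,x^{1+\alpha}(1+o(1)),
\]
which tends locally uniformly to $(d/d')x^{1+\alpha}$ when $\alpha=\beta$ and to $0$ when $\alpha>\beta$; both cases coincide with $\bar\Psi$.

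\textbf{Step 3: conclusion and main obstacle.} Given locally uniform convergence of $(\tilde\Psi^{(t)},\tilde\Phi^{(t)})$ to $(\bar\Psi,\bar\Phi)$ and of the initial conditions $c_t y\to 0$, the Skorokhod convergence \eqref{linearFLT} follows from a standard continuity theorem for CBIs under convergence of their characteristics; existence and uniqueness of the limit process is unproblematic since $\bar\Psi$ is either locally Lipschitz away from $0$ or identically zero. The main technical point lies in Step 2: promoting the pointwise Karamata asymptotics for $\Psi$ and $\Phi$ to locally uniform convergence of the rescaled mechanisms, for which the uniform convergence theorem for slowly varying functions is the essential tool. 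In the degenerate case $\alpha>\beta$, the limit $\bar\Psi\equiv 0$ means the limit CBI reduces to the $\beta$-stable subordinator of exponent $\bar\Phi(x)=x^\beta$, as claimed.
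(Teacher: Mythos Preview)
Your proposal is correct and follows essentially the same route as the paper: both compute the rescaled mechanisms $\tilde\Phi^{(t)}(z)=t\Phi(c_t z)\to z^\beta$ and $\tilde\Psi^{(t)}(z)=tc_t^{-1}\Psi(c_t z)\to\bar\Psi(z)$ via regular variation, and then pass to Skorokhod convergence. The only difference is packaging: where you invoke an unspecified ``standard continuity theorem for CBIs'', the paper works directly with generators on the core $\mathcal{D}_{\mathrm{CBI}}=\mathrm{Vect}\{e_z:z\ge 0\}$, observing that $\mathcal{A}^{(t)}e_z(x)=\big(x\tilde\Psi^{(t)}(z)-\tilde\Phi^{(t)}(z)\big)e_z(x)$, so that pointwise convergence of the mechanisms already gives $\|\mathcal{A}^{(t)}e_z-\bar{\mathcal L}e_z\|\to 0$ (since $\sup_x xe_z(x)<\infty$), and then applies Kallenberg's Theorem~19.25. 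This makes the final step self-contained and in fact only requires pointwise convergence of the mechanisms at each $z$, whereas your black-box route needs the locally uniform convergence you derive from Karamata's UCT; both are fine, but it would strengthen your write-up to name the continuity theorem you rely on (or to reproduce the paper's two-line generator argument on exponentials).
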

\begin{remark}
\begin{enumerate}
    \item[i)]  In all cases of Proposition \ref{thm2}, by the assumption $\beta\leq \alpha$, we have $\int_0 \frac{\Phi(x)}{\Psi(x)}\ddr x=\infty$.  Furthermore, when $\beta<1$, the  immigration measure possesses a log moment. Indeed, we have plainly $\int_{0}\frac{\Phi(x)}{x}\ddr x<\infty$, which by \eqref{equivphinuintro} allows us to conclude. In the case $\beta=\alpha=1$, the finiteness of the log moment depends on whether $\int_0 \ell(x)\ddr x<\infty$ or not.
    %In the limiting case $\beta=\alpha=1$, since $\Psi(x)\leq Cx^2$ for some constant $C>0$, necessarily the slowly varying function $\ell(x)=\Psi(x)/x^2$ is bounded  by $C$ and $\int_{0}\ell(x)\ddr x<\infty$, so that the immigration has also a log-moment.
    \item[ii)]   When $\beta<\alpha$ or $d=0$, the limiting process $X$ is a CBI$(0,\bar{\Phi})$, namely a stable subordinator with Laplace exponent $\bar{\Phi}$.
    \item[iii)] In the case $\beta>\alpha$, we have  $\int_0 \frac{\Phi(u)}{\Psi(u)}\ddr u<\infty$ and the CBI admits a limiting distribution.
%, see e.g. \cite{Pinsky} and \cite{KELLERRESSEL20122329}. 
Regarding functional limits of the form \eqref{linearFLT}, this is a degenerate case,  as for any fixed $s>0$, $Y_{st}$ converges in law  towards the limiting distribution as $t$ goes to $\infty$.
\end{enumerate}
\end{remark}
\begin{remark}\label{remlocalextinction} 
The CBI$(\bar{\Psi},\bar{\Phi})$, $X$, is self-similar of index $\alpha$, see e.g. Patie \cite[Lemma 4.8]{Patie}.  According to Duhalde et al. \cite[Example 1]{Duhalde}, $X$ is recurrent (resp. transient) if $d'/d\leq \alpha$ (resp. $d'/d>\alpha$). Moreover, $X$ hits $0$ with positive probability if and only if $d'/d<\alpha$. In this case, heuristically, there are random times $s$, at which the process $(Y_{st},t\geq 0)$ stays smaller than $1/\Phi^{-1}(1/t)$. 
%Note that $Y$ might not be able to reach $0$ (see \cite[Theorem 2]{Duhalde} for a necessary and sufficient condition), hence those times at which the growth rate is back at $0$ are not necessarily linked to local extinctions in the prelimiting CBI. When $d'/d\leq \alpha-1$, the process $X$ is null recurrent, so that the growth rate of $Y$ oscillates. In the case $d'/d>\alpha-1$, $X$ is transient and goes to $\infty$ a.s..
\end{remark}
%\begin{remark}
%The CBI$(\bar{\Psi},\bar{\Phi})$, $X$, is self-similar of index $\alpha$, see e.g. Patie \cite[Lemma 4.8]{Patie}. %Heuristically, the process $X$ measures the growth of the CBI $Y$. 
%We mention that some path properties of CBIs have been studied in Duhalde et al. \cite{Duhalde}. 
%We review some. 
%\begin{itemize}
 %   \item 
 %   When $\beta<\alpha$, the CBI$(\Psi,\Phi)$ is transient (i.e goes to $\infty$ a.s.), see  \cite[Example 1]{Duhalde}. Note that in this case the limiting process is  almost surely increasing. 
%\item 
%When $\beta=\alpha$, the processes $X$ and $Y$ are both transient or null recurrent accordingly whether $d'/d>\alpha-1$ or $d'/d\leq \alpha-1$. The limiting process $X$ hits $0$ if and only if $d'/d<\alpha-1$.
%, see Duhalde et al. \cite[Example 1]{Duhalde}. 
%The times at which $X$ is at $0$ then can be interpreted as exceptional times at which the growth of the CBI $Y$ stops. Note that $Y$ might not be able to reach $0$ (see \cite[Theorem 2]{Duhalde} for a necessary and sufficient condition), hence those times at which the growth rate is at $0$ are not necessarily linked to local extinctions in $Y$. 
%\end{itemize}   
%.
%\end{remark}
\medskip

%Contrary to the setting of Proposition \ref{thm2},  
When the immigration measure $\nu$ has no log moment, it has been observed in \cite[Theorem 3 and Corollary 1]{zbMATH07549543} that when the branching mechanism is non-critical, no linear renormalisation of the CBI process is possible. 
%Moreover  \cite[Corollary 1]{zbMATH07549543} states that 
%In particular the  growth of the process is not controlled by the branching mechanism only and immigration has to be taken into account in the renormalization. Moreover in the divergence case 
%if $(Y_t,t\geq 0)$ and $(\tilde{Y}_t,t\geq 0)$ are two independent CBIs with same mechanisms $(\Psi, \Phi)$, such that $\Psi$ is not critical (i.e. $\Psi'(0)\neq 0$), then
%\begin{equation*} \label{extremebehavior}\mathbb{P}\big(\tilde{Y}_t/Y_t\underset{t\rightarrow \infty}{\longrightarrow} 0\big)= 1-\mathbb{P}\big(\tilde{Y}_t/Y_t\underset{t\rightarrow \infty}{\longrightarrow} \infty\big)=1/2.
%\end{equation*}
%see \cite[Corollary 1]{zbMATH07549543}. 
This can be explained by the fact that some immigration arrivals are so large that they may overwhelm the extant population and change radically the rate of its growth. 
 %In this fashion, \cite[Corollary 1]{zbMATH07549543} states that if $(Y_t,t\geq 0)$ and $(\tilde{Y}_t,t\geq 0)$ are two independent CBIs with same mechanisms $(\Psi, \Phi)$, such that $\Psi$ is not critical (i.e. $\Psi'(0)\neq 0$) and $\int_0 \frac{\Phi(u)}{u}\ddr u=\infty$ then
%\begin{equation*} \label{extremebehavior}\mathbb{P}\big(\tilde{Y}_t/Y_t\underset{t\rightarrow \infty}{\longrightarrow} 0\big)= 1-\mathbb{P}\big(\tilde{Y}_t/Y_t\underset{t\rightarrow \infty}{\longrightarrow} \infty\big)=1/2.
%\end{equation*}
%
This phenomenon is reflected by some extremal behaviours in the CBI processes.
\smallskip
%Extremal processes are known to appear in pure branching processes when the branching law has infinite mean (or infinite variation), see e.g. Foucart and Ma \cite{foucart2019} and the references therein. Here, we shall focus on CBIs whose branching law is having a first moment. An important difference between the setting of pure branching processes with infinite mean and these latter lies on the fact that no almost-sure limits are available for CBIs when $\int_0^1 \frac{\Phi(u)}{|\Psi(u)|}\ddr u=\infty$, see \cite[Theorem 3]{zbMATH07549543}. Therefore we shall only look for results in law

We shall indeed find conditions on the immigration mechanism $\Phi$, see the forthcoming assumption $\mathbb{H}$, under which the limiting process that will arise is a so-called ESN process. The latter takes  the following form
\begin{equation}\label{maxidform}
%\left(Z_s,s\geq 0\right):=\left(\sup_{0\leq u\leq s}\xi_u, s\geq 0\right) \text{ or } 
\left(M_s,s\geq 0\right):=\big(\sup_{0\leq u\leq s}\big(\xi_u-\gamma (s-u)\big), s\geq 0\big),
\end{equation}
where $\gamma \in \mathbb{R}$ and $\mathcal{N}:=\sum_{u\geq 0}\delta_{(u,\, \xi_u)}$ is a Poisson point process (in short PPP) with intensity $\ddr t\times \ddr \mu$, for some measure $\mu$ on $(0,\infty)$ such that $\bar{\mu}(x)<\infty$ for any $x>0$. 
\smallskip

The process $M$ in \eqref{maxidform} is an ESN process based on the measure $\mu$ with slope $-\gamma $. We write for short $\mathrm{ESN}(\gamma ,\mu)$. When $\gamma=0$, $M$ in \eqref{maxidform} is a classical extremal process based on the measure $\mu$. We refer 
%to Dwass \cite{zbMATH03273557, zbMATH03476397, zbMATH03511318} and 
to Resnick's book \cite[Chapter 4.3]{Res87}. 
\medskip

%We now introduce and discuss our main assumption.
We now introduce and explain our main assumption regarding the immigration mechanism $\Phi$. Under this assumption, after renormalisation of the CBI, extremal shot noise processes will emerge as the limiting processes.
\begin{assumption}[$\mathbb{H}$] 
\begin{align*}
x\mapsto x\Phi(e^{-x}) &\text{ is regularly varying at } \infty \text{ and has a non-decreasing equivalent function}.
\end{align*}
\end{assumption}
%\noindent Under this assumption, after renormalisation of the CBI, extremal shot noise processes will emerge as the limiting processes.
\begin{remark}
   Assumption $\mathbb{H}$ implies that $\Phi$ is slowly varying at $0$ and by Karamata Tauberian theorem and monotone density theorem, see the proof of forthcoming Lemma \ref{lem: fastjumpdiff} for details and references, one has
    \begin{equation}\label{equivphinuintro}
        \Phi(1/y)\underset{y\rightarrow \infty}{\sim} \bar{\nu}(y).
    \end{equation}
    Observe also that, when $\mathbb{H}$ holds, $\int^{\infty}\Phi(e^{-x})\ddr x=\infty$, since
    %$\mathbb{H}$ entails that $x\mapsto x\Phi(e^{-x})$ has a positive limit at $\infty$, 
    $c=\underset{x\rightarrow \infty}{\lim} x\Phi(e^{-x})\in (0,\infty]$. By the change of variable $u=e^{-x}$, we get $\int_0 \frac{\Phi(u)}{u}\ddr u=\infty$, so that $\nu$ has no log moment. 
    %We mention also that the assumption $\mathbb{H}$ entails that the CBI$(\Psi,\Phi)$ is transient for any branching mechanism $\Psi$, see \cite[Remark 8]{zbMATH07549543}. 
\end{remark}
\begin{remark} 
The regular variation assumption in  $\mathbb{H}$ can be written more explicitly as follows: 
\begin{align}\label{regularvariationassumption}  \qquad \qquad & \exists \delta \in \mathbb{R} \text{ and }  \ell \text{ slowly varying at } 0 \text{ such that }
x\mapsto x\Phi(e^{-x})\underset{x\rightarrow \infty}{\sim} x^{1-\delta}\ell(x).
\end{align}
The facts that $x\mapsto \Phi(e^{-x})$ is decreasing and that $x\Phi(e^{-x})\sim H(x)$ as $x$ goes to $\infty$, where $H$ is a non-decreasing function, imply that $\delta\in [0,1]$.  Notice that if $\delta \in [0,1)$, i.e. $x\mapsto x\Phi(e^{-x})$ is regularly varying with positive index, then the existence of a non-decreasing equivalent function $H$ is guaranteed, see \cite[Theorem 1.5.3, page 22]{regularvariation}. When $\delta=1$, the map $x\mapsto x\Phi(e^{-x})$ is slowly varying at $\infty$ and we impose further in Assumption $\mathbb{H}$  that there exists a non-decreasing equivalent function $\ell$. %\textcolor{red}{The latter monotonicity property will play an important role in the proof of Lemma \ref{lem: fastjumpdiff} (see \eqref{eqn:monotone}), which is a key lemma to prove Theorem \ref{thmdivergencecase}.} 
%\textcolor{red}{The latter monotonicity property will play an important role in the proof of Lemma \ref{lem: fastjumpdiff} (see \eqref{eqn:monotone}), which is a key lemma to prove Theorem \ref{thmdivergencecase}.} 
\end{remark}
\medskip

The assumption $\mathbb{H}$ covers two regimes:
%\vspace{3mm}
%\begin{table}[h!]
%    \centering
%    \begin{tabular}{cc}
%%         \textbf{Log case} &\textbf{ Super-log case} \\
 %         &         \\
 %        \textbf{Log case}: \ $x\Phi(e^{-x})\underset{x\rightarrow \infty}{\longrightarrow} c \in (0,\infty)$, \quad or & \textbf{ Super-log case}: \ $x\Phi(e^{-x})\underset{x\rightarrow \infty}{\longrightarrow} \infty$.  
  %  \end{tabular}
%\end{table}
\begin{align}\label{tworegimes}
         \textbf{Log case}: \ x\Phi(e^{-x})\underset{x\rightarrow \infty}{\longrightarrow} c \in (0,\infty), \quad \text{ or } & \textbf{ Super-log case}: \ x\Phi(e^{-x})\underset{x\rightarrow \infty}{\longrightarrow} \infty.  
         \end{align}
%The limiting process is different for the three regimes.
%\begin{enumerate}
%\item 
%Notice that in the super-log case, the limiting process does not depend on the branching mechanism. The branching (and its typical exponential growth/decay) is however involved in the assumption of regular variation at $\infty$ of $x\mapsto\Phi(e^{-x})$.  
%The latter implies that $\Phi$ is slowly varying at $0$.
%\end{remark}
%\begin{remark} Heuristically the nonlinear renormalization $y\mapsto 1/\Phi(1/y)$ transforms the infinite divisibility property of the CBI process $Y$, see Section \ref{CBI}, into the property of max-infinite divisibility, see \cite[Remark 7] {FoucartYuanESN23}.  %\end{remark}
%\begin{remark}\label{remarkcomparisonwithpreviousresults}
%Pointwise convergences in law for $s=1$ have been studied in \cite{zbMATH07549543}. We recover here the results in the log and fast cases in \cite[Theorems 5 and 6]{zbMATH07549543}. Interestingly, the convergences in Theorem \ref{thmdivergencecase} hold also true in the critical case, namely when $\Psi'(0)=b=0$, this was not addressed in \cite{zbMATH07549543}. 
%%The critical case was only treated under an additional assumption of regularly varying branching mechanism $\Psi$ in our previous work. We see here that this condition was unnecessary . 
%\end{remark}  
%\vspace{-3mm}
\noindent When $x\mapsto x\Phi(e^{-x})$ is regularly varying but not slowly varying, i.e. $\delta\in [0,1)$ in \eqref{regularvariationassumption}, then we are in the Super-log regime.  The slowly varying case, for which $\delta=1$ in \eqref{regularvariationassumption}, lies at the boundary between the two regimes. It depends on whether the limit $\underset{x\rightarrow \infty}{\lim} x\Phi(e^{-x})=\underset{x\rightarrow \infty}{\lim} \ell(x)$ is finite or not (the latter exists in $(0,\infty]$ by assumption $\mathbb{H}$).  
\medskip

The main contribution of this article is the following theorem.
\begin{theorem} \label{thmdivergencecase}
Let $(Y_t,t\geq 0)$ be a $\mathrm{CBI}(\Psi,\Phi)$ with $\Psi\not \equiv 0$ and $b:=\Psi'(0+)>-\infty$. Assume $\mathbb{H}$. 
%\begin{center} $\mathbb{H}$:  $x\mapsto x\Phi(e^{-x})$ is regularly varying at $\infty$ and has an eventually\footnote{non-decreasing from a sufficiently large value} non-decreasing equivalent function. \end{center}   

Then,
\begin{equation}\label{eqn:logconv}\left(\frac{1}{t\Phi(1/Y_{st})},s\geq 0\right)\underset{t\rightarrow \infty}{\Longrightarrow}  (M_s,s\geq 0),\end{equation}
where $(M_s,s\geq 0 )$ is an $\mathrm{ESN}(b/c,\mu)$ with $c:=\underset{x\rightarrow \infty}{\lim} x\Phi(e^{-x})\in (0,\infty]$ and $\mu(\ddr x):=\ddr x/x^2$.
\smallskip

The convergence \eqref{eqn:logconv} also holds if we replace $\Phi$ by an increasing function $F$ equivalent to $\Phi$ at $0$.
\end{theorem}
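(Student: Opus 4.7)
The plan is to exploit the Poissonian construction of the CBI: one writes $Y_{st}$ as the sum of an initial CB$(\Psi)$ process started from $Y_0$ and independent contributions $Z^{(h,u)}_{(s-u)t}$ of immigration atoms $(u,h)$ coming from a PPP on $[0,s]\times(0,\infty)$ of intensity $t\,\mathrm{d}u \times \nu(\mathrm{d}h)$ (after the time rescaling $u \mapsto ut$). Under assumption $\mathbb{H}$, $\Phi$ is slowly varying at $0$, so the renormalising map $y \mapsto 1/(t\Phi(1/y))$ behaves asymptotically like a logarithm on the relevant scale; this is precisely what converts the sum of immigration contributions into a supremum and produces the ESN limit, while the drift $-(b/c)(s-u)$ will emerge from the CB mean $\mathbb{E}[Z^{(h)}_{(s-u)t}] = h e^{-b(s-u)t}$.

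The main deterministic input is a weak law of large numbers for CB$(\Psi)$: since $Z^{(h)}_t$ is infinitely divisible with mean $h e^{-bt}$, one has $Z^{(h)}_t/h \to e^{-bt}$ in probability as $h \to \infty$, uniformly in $t$ on compact sets (apply the classical WLLN to the decomposition $Z^{(nh_0)}_t \stackrel{d}{=} \sum_{i=1}^n Z^{(h_0,i)}_t$). Consequently, for an immigration of size $h = e^{t\xi}$ arriving at time $ut$, $\log Z^{(h,u)}_{(s-u)t} = t\xi - bt(s-u) + o_{\mathbb{P}}(t)$. Setting $\Lambda(x) := 1/\Phi(e^{-x})$, which is regularly varying at $\infty$ of index $\delta \in [0,1]$ under $\mathbb{H}$, one gets $1/(t\Phi(1/Z^{(h,u)}_{(s-u)t})) \approx \Lambda(t\xi - bt(s-u))/t$. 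In the \textbf{Log case} ($\delta = 1$, $\Lambda(x) \sim x/c$), this tends to $\xi/c - (b/c)(s-u)$. In the \textbf{Super-log case} ($\delta < 1$, $c = \infty$, hence $\gamma = 0$), the linear correction $bt(s-u)$ is negligible compared with the scale $\Lambda^{-1}(t)$ on which $\xi$ must live to produce an order-one contribution, and the drift drops out of the limit.

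The limiting PPP is then obtained by the change of variables $(u,h) \mapsto (u,\Lambda(\log h)/t)$ on the immigration PPP. Combining the tail equivalence $\bar\nu(y) \sim \Phi(1/y)$ from \eqref{equivphinuintro} with the regular variation of $\Lambda$, a direct computation shows that the image PPP restricted to $[0,s] \times [\epsilon,\infty)$ converges to a PPP of intensity $\mathrm{d}u \times \mathrm{d}x/x^2$. The initial term satisfies $\log Z^{(Y_0)}_{st} = -bst + o_{\mathbb{P}}(t)$ on the event of non-extinction (relevant only when $b<0$), which after renormalisation yields the boundary value $-(b/c)s$ that the ESN inherits naturally from the endpoint $u=0$. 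Because $\Phi(1/\cdot)$ is slowly varying, $1/(t\Phi(1/(x_1 + \cdots + x_n))) \approx \max_i 1/(t\Phi(1/x_i))$ up to lower order, so the sum over immigrations becomes asymptotically a supremum; this gives convergence of the finite-dimensional distributions to $\mathrm{ESN}(b/c,\mu)$.

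The principal obstacle is tightness in the Skorokhod topology, since the renormalised process is not obviously monotone in $s$. My strategy is to truncate the immigration PPP to atoms of size at least $e^{\eta t}$ for a small $\eta > 0$: the truncated process consists of finitely many large jumps followed by nearly-deterministic CB evolution, and its convergence to a truncated ESN process follows from the machinery developed in \cite{FoucartYuanESN23}. A uniform control on the small-jump contribution, obtained by integrating the mean $\mathbb{E}[Z^{(h)}_t] = h e^{-bt}$ against $\nu$ on $(0, e^{\eta t}]$ together with Markov's inequality, will show that this contribution is negligible in the renormalised scale as $t \to \infty$ and then $\eta \to 0$. The final assertion that $\Phi$ may be replaced by any increasing $F \sim \Phi$ near $0$ is immediate from the slow variation of $\Phi$ and the uniform convergence theorem.
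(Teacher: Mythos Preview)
Your approach is genuinely different from the paper's. The paper proceeds analytically: it replaces $\Phi$ by a smooth equivalent $\varphi$, sets $g(y)=1/(t\varphi(1/y))$, and shows that the generator of the Feller process $(g(Y_{st}))_{s\ge0}$ converges uniformly on a core to the ESN generator; tightness is then automatic from Kallenberg's Theorem~19.25, and a short lemma transfers the result from $\varphi$ back to $\Phi$ (or any increasing $F\sim\Phi$). Your route, via the Poissonian shot-noise representation and point-process convergence, is precisely the strategy the paper sketches heuristically in its section on the Poisson structure and attributes to Iksanov--Kabluchko for the GWI Log case, before explicitly choosing not to follow it. The trade-off is clear: the generator method handles Log and Super-log, all signs of $b$, and $\beta\ge0$ in one pass with tightness for free, at the cost of some technical derivative estimates on $g$; your route is more transparent about \emph{why} the ESN arises.

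That said, your sketch has real gaps. First, the claim $\log Z^{(h)}_{(s-u)t}=t\xi-b(s-u)t+o_{\mathbb P}(t)$ with $h=e^{t\xi}$ requires a law of large numbers uniform as \emph{both} the initial mass and the time horizon diverge; under the sole hypothesis $\Psi'(0+)>-\infty$ the CB need not have finite variance, and you also need the estimate to hold simultaneously for all PPP atoms. Second, and more seriously, your small-jump control via Markov's inequality on the mean does not yield negligibility when $b<0$: the immigrations of size $h<e^{\eta t}$, each amplified by $e^{|b|(s-u)t}$, have aggregate mean of order $e^{(|b|s+\eta)t}$, which after renormalisation is $\approx(|b|s+\eta)/c$, not $o(1)$. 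This quantity is exactly the deterministic baseline $-(b/c)s$ of the ESN. You attribute that baseline to the initial CB on its non-extinction event, but the initial CB can go extinct with positive probability even in the supercritical case, and on that event it is precisely the accumulated small immigrations that must carry the baseline. So the decomposition ``big jumps $\to$ ESN atoms, small jumps $\to 0$'' does not work as stated and has to be reorganised. A minor further point: the shot-noise formula you invoke assumes $\beta=0$, and you have not said how the continuous immigration drift is absorbed.
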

\begin{remark}\label{rem:b=0c=infinity} 
\begin{enumerate}
\item Theorem \ref{thmdivergencecase} includes also the critical branching mechanism for which $b=0$. This completes \cite[Section 3.4]{zbMATH07549543} where the one-dimensional laws (i.e. with $s=1$) were studied under extra assumptions on $\Psi$. 
\item Observe that when $b=0$ or $c=\infty$, the ESN process $M$ shrinks to a classical extremal process based on the measure $\mu$. 
\end{enumerate}
 \end{remark}
%\begin{remark}
%    Notice that $\mathbb{H}$ implies that $\Phi$ is slowly varying at $0$ and by Karamata Tauberian theorem and monotone density theorem, see the proof of forthcoming Lemma \ref{lem: fastjumpdiff} for details and references, one has
%    \begin{equation}\label{equivphinuintro}
%        \Phi(1/y)\underset{y\rightarrow \infty}{\sim} \bar{\nu}(y).
%    \end{equation}
%    Observe also that, when $\mathbb{H}$ holds, $\int^{\infty}\Phi(e^{-x})\ddr x=\infty$, since
%    %$\mathbb{H}$ entails that $x\mapsto x\Phi(e^{-x})$ has a positive limit at $\infty$, 
%    $c=\underset{x\rightarrow \infty}{\lim} x\Phi(e^{-x})\in (0,\infty]$. The change of variable $u=e^{-x}$, we get $\int_0 \frac{\Phi(u)}{u}\ddr u=\infty$, so that $\nu$ has no log moment. 
    %We mention also that the assumption $\mathbb{H}$ entails that the CBI$(\Psi,\Phi)$ is transient for any branching mechanism $\Psi$, see \cite[Remark 8]{zbMATH07549543}. 
%\end{remark}
%\newpage
\begin{remark}\label{remarknoconv}
    We shall see along the proof of Theorem \ref{thmdivergencecase}, that when $x\Phi(e^{-x})\underset{x\rightarrow \infty}{\longrightarrow}0$, (call it Sub-log case), the convergence of the form \eqref{eqn:logconv} cannot hold.
     %Other renormalizations of 
     %see \cite[Section 3.3]{zbMATH07549543} for some information about this case. 
%The sub-log case  falls into the so-called \textit{slow} regime which shares many aspects of the stationary one.
\end{remark}

\medskip
We describe now into more details the renormalisation function and the convergence \eqref{eqn:logconv}.
%involving the immigration mechanism $\Phi$.
%The two different regimes of immigration (L) and (F) can be written equivalently as follows:
\smallskip

\indent In the Log case, see \eqref{tworegimes}, we have \begin{equation}\label{logcaseequiv}\Phi(1/y)\underset{y\rightarrow \infty}{\sim} \frac{c}{\ln(1+y)}.\end{equation}
Equivalently, by \eqref{equivphinuintro},  $\bar{\nu}(u)\underset{u\rightarrow \infty}{\sim} \frac{c}{\ln (1+u)}$. Set $(\tilde{M}_s,s\geq 0):=(cM_s,s\geq 0)$. The convergence \eqref{eqn:logconv} can  be  rewritten more explicitly as follows
    \[\left(\frac{1}{t}\ln\left(1+Y_{st}\right),s\geq 0\right) \underset{t\rightarrow \infty}{\Longrightarrow}  (\tilde{M}_s,s\geq 0).\]
 The process $\tilde{M}$ corresponds to the ESN process with slope $-b$ constructed as in \eqref{maxidform} from the PPP $\tilde{\mathcal{N}}$ which is obtained as the image of $\mathcal{N}$ by $\xi\mapsto c\xi$. The latter has for intensity measure $\tilde{\mu}(\ddr x):=c/x^{2}\ddr x$. We recover here the analogue result as obtained for the GWIs in \cite[Theorem 1.1]{IKSANOV2018291}. 
    %Note that $\tilde{M}$ is self-similar with index $1$, see \cite[Example 1]{FoucartYuanESN23}. 
\medskip

\begin{remark} The ESN process $M$ (or equivalently $\tilde{M}$ above) is self-similar with index $1$, see \cite[Example]{FoucartYuanESN23}. It is transient if and only if $b\leq 0$, or $b>0$ and $c/b>1$. Moreover, $M$ hits $0$ if and only if $b>0$ and $c/b<1$. Similarly as in Remark \ref{remlocalextinction}, intuitively, this indicates that at certain random times $s$, $(Y_{st},t\geq 0)$ does not evolve at an exponential scale.\end{remark}

\medskip
\indent In the Super-log case, see \eqref{tworegimes}, we have 
%one can directly check from \eqref{regularvariationassumption} that the regular variation assumption of $x\mapsto x\Phi(e^{-x})$ at $\infty$ holds if and only if
 \begin{equation}\label{superlogequivalent} 
 %$$
 \Phi(1/y)\underset{y\rightarrow \infty}{\sim} \frac{\ell\big(\ln(1+ y)\big)}{\ln(1+y)^{\delta}}
 %$$
 \end{equation} 
 with $\delta\in [0,1]$ and $\ell$ a slowly varying function at $\infty$ such that:
 if $\delta=0$ then $\underset{y\rightarrow \infty}{\lim} \ell(y)=0$, since $\lim_{x\to\infty}\Phi(e^{-x})=0$; if  $\delta =1$, $\ell$ is non-decreasing (by the assumption $\mathbb{H}$) and  $\underset{y\rightarrow \infty}{\lim} \ell(y)=\infty$.
 The equivalence \eqref{superlogequivalent} can be written in terms of the tail of $\nu$, see \eqref{equivphinuintro}, as follows:
 \begin{equation}
     \label{equivnu} \bar{\nu}(u)\underset{u\rightarrow \infty}{\sim} \ln(1+u)^{-\delta}\ell (\ln(1+ u)).
 \end{equation}
 
    %\item [(S)] \textbf{Slow case}:   $x\mapsto \Phi(e^{-x})$ is regularly varying at $\infty$ and $x\Phi(e^{-x})\underset{x\rightarrow \infty}{\longrightarrow} 0$.
    %and $\int_0^1\frac{\Phi(x)}{x}\ddr x=\infty$. 
    %In other words $$\Phi(x)\underset{x\rightarrow 0}{\sim } \frac{L\big(\ln(1+ 1/x)\big)}{\ln(1+1/x)},$$ with $L$ a slowly varying function at $\infty$ such that $L(u)\underset{u\rightarrow \infty}{\longrightarrow} 0$. 
    %and $\int_2^{\infty}\frac{L(u)}{u\ln u}\ddr u=\infty$. 
    %Equivalently, $\bar{\nu}(u)\underset{u\rightarrow \infty}{\sim} \ln(u)^{-1}L(\ln u)$.
    %(so that \eqref{cinfty} holds). 
%\end{itemize}
%From now on we always assume the condition \eqref{cinfty}. 
%We assume also that the branching mechanism is non critical; namely $\Psi'(0+)=b\neq 0$. Further backgrounds on CBIs are provided in Section \ref{secCBI}. 
%Our main results on CBIs are the following convergences. 
%\begin{remark}
%Consider for instance the generalized Ornstein-Uhlenbeck process 
%\[\ddr Y_t=-bY_t\ddr t+\ddr L_t\]
%driven by a subordinator $(L_t,t\geq 0)$ with Laplace exponent %$\Phi$. This process corresponds to a CBI$(\Psi,\Phi)$ with %$\Psi(u)=bu$.
%\end{remark}

\begin{example}[see Example 1-(3) in \cite{zbMATH07549543}]
Let $M$ be an extremal process based on the measure $\mu(\ddr x)=\ddr x/x^2$. Both cases below are in the Super-log regime: 
\begin{enumerate}
    \item If $\bar{\nu}(u)\underset{u\rightarrow \infty}{\sim} 1/\ln (1+\ln(1+u))$ then 
    %when $y$ goes to $\infty$, $\Phi(1/y)\sim 1/\ln(1+ \ln (1+1/y))$ and 

\[\left(\frac{1}{t}\ln (1+\ln (1+Y_{st})),s\geq 0\right)\underset{t\rightarrow \infty}{\Longrightarrow}  (M_s,s\geq 0).\]
    %$(\frac{1}{t}\ln \ln (1+Y_{st}),s\geq 0)$ converges towards $(Z_s,s\geq 0)$.
    \item If $\bar{\nu}(u)\underset{u\rightarrow \infty}{\sim} \ln (1+\ln (1+u)) /\ln(1+ u)^{\delta}$, with $\delta\in (0,1]$ 
    then 
    %when $y$ goes to $\infty$, $\Phi(1/y)\sim \ln (1+1/y)^{-\delta}\ln (1+\ln (1+1/y))$ and 
\[\left(\frac{1}{t}\frac{\ln (1+Y_{st})^{\delta}}{\ln (1+\ln (1+Y_{st}))},s\geq 0\right)\underset{t\rightarrow \infty}{\Longrightarrow}  (M_s,s\geq 0).\]
Notice that $\mathbb{H}$ is fulfilled in the case $\delta=1$, since one can take $\ell(y):=\ln(1+y)$ which is increasing.
\end{enumerate}
\end{example}

%\end{enumerate}

We state now an aside result for subordinators. The latter can be seen as pure immigration CBI processes, i.e. with branching mechanism $\Psi\equiv 0$.  
\begin{theorem}[Subordinators]\ \label{thmextremasubordinator} Let $(Y_t,t \geq 0)$ be a subordinator with Laplace exponent $\Phi$.
\begin{itemize}
\item[(i)] If $\Phi$ is slowly varying at $0$, then 
%the following convergence holds in the sense of the finite-dimensional distributions
\begin{equation}\label{convsubordinator}
\left(\frac{1}{t\Phi(1/Y_{st})},s\geq 0\right)\xLongrightarrow[t\rightarrow \infty]{\text{f.d.d.}} (M_s,s\geq 0),
\end{equation}
where $(M_s,s\geq 0)$ is an extremal process based on the measure $\mu(\ddr x)=\ddr x/x^2$.

\item[(ii)] Under the assumption $\mathbb{H}$, the convergence \eqref{convsubordinator} holds in the Skorokhod sense and  one can replace $\Phi$ by any increasing function $F$ equivalent to $\Phi$ at $0$.
\end{itemize}
\end{theorem}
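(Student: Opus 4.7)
The plan is to exploit the Poisson structure of the subordinator's jumps and show that, under slow variation of $\Phi$ at $0$, the normalized subordinator inherits the extremal statistics of its largest jump.

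First, I would record the basic Laplace identity: since $\E[e^{-\lambda Y_t}]=e^{-t\Phi(\lambda)}$, setting $\lambda_t(x):=\Phi^{-1}(1/(tx))$ and $y_t(x):=1/\lambda_t(x)$ yields $\E[\exp(-Y_{st}/y_t(x))]=e^{-s/x}$ for all $s,x,t>0$. This suggests $y_t(x)$ as the natural quantile scale, and by Karamata--Tauberian it also gives $\bar\nu(y)\sim\Phi(1/y)$ as $y\to\infty$, so the L\'evy tail is slowly varying.

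Next I would pass to the PPP $\sum_k\delta_{(T_k,H_k)}$ of jumps of $Y$ (with intensity $\ddr u\otimes\nu(\ddr h)$) and form the rescaled PPP $\mathcal{N}_t:=\sum_k\delta_{(T_k/t,\,g_t(H_k))}$, where $g_t(h):=1/(t\Phi(1/h))$. A direct computation shows its intensity satisfies $t(g_t\sharp\nu)([x,\infty))=t\bar\nu(y_t(x))\to 1/x$ for every $x>0$, so $\mathcal{N}_t$ converges as a PPP on $[0,\infty)\times(\epsilon,\infty)$ to the PPP $\mathcal{N}$ with intensity $\ddr u\otimes\ddr x/x^2$. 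Consequently the maximum-jump process $M^t_s:=\sup_{T_k\le st}g_t(H_k)$ converges, jointly at any finite collection of times, to the extremal process $M$ based on $\mu(\ddr x)=\ddr x/x^2$.

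The heart of the argument is to show that $g_t(Y_{st})\approx M^t_s$ asymptotically. Fix $\epsilon>0$ and split $Y_{st}=Y^{(\epsilon)}_{st}+Y^{(\epsilon,c)}_{st}$ according to whether $H_k\le y_t(\epsilon)$ or not. The number of large jumps is Poisson with mean $st\bar\nu(y_t(\epsilon))\to s/\epsilon$, hence tight, and Karamata's direct theorem applied to the slowly-varying $\bar\nu$ gives $\int_0^{y}h\,\nu(\ddr h)=o(y\bar\nu(y))$, whence $Y^{(\epsilon)}_{st}=o_P(y_t(\epsilon))$. On the event $\{M^t_s>\epsilon\}$ (of probability tending to $1-e^{-s/\epsilon}$) the largest jump satisfies $K_t(s):=\sup_{T_k\le st}H_k\ge y_t(\epsilon)$, so the previous bounds force $Y_{st}/K_t(s)$ to lie in $[1,C(\epsilon)]$ in probability. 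The uniform convergence theorem for slowly varying functions then gives $\Phi(1/Y_{st})/\Phi(1/K_t(s))\to 1$ in probability, i.e.\ $g_t(Y_{st})/M^t_s\to 1$. Letting $\epsilon\to 0$ and applying Slutsky with the PPP convergence yields the f.d.d.\ convergence of part~(i).

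For part~(ii), assumption $\mathbb{H}$ supplies the regular variation of $x\mapsto x\Phi(e^{-x})$ with a non-decreasing equivalent, which is exactly the extra structure used in Theorem~\ref{thmdivergencecase} to pass from f.d.d.\ to full Skorokhod convergence and to accommodate a replacement $F\sim\Phi$. I would therefore adapt the proof of Theorem~\ref{thmdivergencecase} to the degenerate case $\Psi\equiv 0$ (in which $b=0$, the slope $b/c$ of the limiting ESN vanishes, and the limit reduces to the plain extremal process). The main obstacle lies in Step~4: in the slowly-varying regime no single jump truly dominates the sum, but the bounded ratio $Y_{st}/K_t(s)$, combined with uniform convergence of $\Phi$, is just enough to close the gap between sum and max.
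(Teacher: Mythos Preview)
Your approach to part~(i) is correct but genuinely different from the paper's. The paper argues directly with Laplace transforms: using the independence of increments of the subordinator, one computes
\[
\E\Big[\prod_{j=1}^n e^{-\theta_j\Phi^{-1}(x_j/t)Y_{s_jt}}\Big]
=\exp\Big(-\sum_{i=1}^n(s_i-s_{i-1})t\,\Phi\Big(\sum_{j=i}^n\theta_j\Phi^{-1}(x_j/t)\Big)\Big),
\]
and then uses only two facts from regular variation: slow variation of $\Phi$ gives $t\Phi(\theta\Phi^{-1}(x/t))\to x$ for every $\theta>0$, and rapid variation of $\Phi^{-1}$ forces $\Phi^{-1}(x_j/t)\ll\Phi^{-1}(x_i/t)$ whenever $x_j>x_i$, so only the smallest $x_j$ in each inner sum survives. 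The limit is then identified with the extremal process law via its degeneracy on $\{0,\infty\}^n$. Your PPP-plus-splitting argument is more probabilistic and explains transparently \emph{why} the max emerges (the small-jump contribution is $o_P$ of the threshold, the large-jump count is tight, and slow variation of $\Phi$ kills bounded multiplicative errors), at the cost of invoking point-process convergence and a tightness argument for the ratio $Y_{st}/K_t(s)$. Both routes are valid; the paper's is shorter and purely analytic, yours gives more intuition and connects to the heuristic in Section~2.2.

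For part~(ii) your plan to specialize the generator-convergence proof of Theorem~\ref{thmdivergencecase} to $\Psi\equiv 0$ is exactly what the paper does, but your closing remark is off target. When $\Psi\equiv 0$ one has $b=0$, $\sigma=0$ and $\pi\equiv 0$, so the diffusion term $I_3$ and the branching-jump term $I_4$ (the paper's Step~4) are identically zero; there is no ``main obstacle'' there. The actual work under $\mathbb{H}$ reduces to Step~1 (the subordinator drift $\beta$, handled by $tg'(y)\to 0$) and Step~2 (the immigration integral, handled by Lemma~\ref{lem: fastjumpdiff}), followed by the equivalence swap of Lemma~\ref{lem:equivconvergence} to replace the smooth $\varphi$ by any increasing $F\sim\Phi$. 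Your remark about ``bounded ratio $Y_{st}/K_t(s)$'' is a leftover from your part~(i) argument and plays no role in the generator approach; tightness in Skorokhod space comes for free from the convergence of generators and Kallenberg's Theorem~19.25.
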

\begin{remark} We see that the renormalisation function and the limiting process are the same for the subordinators and the CBIs in the Super-log case or in the Log case when $b=0$, see Remark \ref{rem:b=0c=infinity}. This reflects the fact that immigration drives the growth of the population in those particular regimes.
\end{remark}
%The hypothesis that $x\mapsto \Phi(e^{-x})$ is regularly varying at $0$ entails the slow variation of $\Phi$ at $0$ and ensures heuristically that the typical exponential growth/decay of the non-critical branching process plays no role in the limit.
\begin{remark}
Theorem \ref{thmextremasubordinator} is reminiscent to a functional limit theorem near time $0$ for subordinators obtained by  Kella and L\"opker \cite{KELLA20133122} in the case $\Phi(x)\sim \gamma \ln x$ as $x$ goes to $\infty$ for some $\gamma>0$. We refer also to Maller and Schindler \cite{MALLER20194144} for another related study. 
%Theorem \ref{thmextremasubordinator} can be seen as a functional limit version of \cite[Proposition 2]{zbMATH07549543}.
\end{remark}
We have left unaddressed the case of a branching dynamics with infinite mean $(b=-\infty)$ as well as the scenario where $c=\underset{x\rightarrow \infty}{\lim}x\Phi(e^{-x})=0$, which is excluded by our assumption $\mathbb{H}$.  In these situations, the $\mathrm{ESN}(b/c,\mu)$ process is no longer defined. It is noteworthy that when $b=-\infty$, the growth of the CB process is almost surely super-exponential, leading to certain extremal processes, as discussed in Foucart and Ma \cite[Theorem 1]{foucart2019} and the references therein. The problem of designing  functional limit theorems of CBIs in this case is left for potential future works.
%We stress that the case of a branching dynamics with infinite mean $(b=-\infty)$ is left unaddressed in this work. In this setting the growth of the CB process is almost surely super-exponential and some extremal processes occur in a reminiscent way, see e.g. Foucart and Ma \cite{foucart2019} and the references therein. The problem of designing  functional limit theorems in the infinite mean case is left for potential future works.
%other renormalization functions should be taylor-made. 
\section{Background on CBI and ESN processes}\label{secCBI}
The proofs of Theorem \ref{thm2} and Theorem \ref{thmdivergencecase} will be based on the convergence of generators. We gather here fundamental properties of generators of CBIs and ESNs. We refer the reader to \cite{KAW} or to Li's lecture notes \cite{zbMATH07273637} for a more recent account on CBIs, see \cite{FoucartYuanESN23} for background on ESNs.
%ESNs have been studied in \cite{FoucartYuanESN23} as Markov processes.
%We refer the reader interested in for instance to Li's lecture notes \cite[Theorem 6.4]{zbMATH07273637}. 
\subsection{Continuous-state branching processes with immigration}\label{sec:backgroundCBI}
We shall focus on CBI processes whose branching mechanism has a finite mean. In this case, the function $\Psi$ takes the form
\begin{equation}\label{eqpsi}
\Psi(q)=bq+\frac{\sigma^2}{2}q^2+\int_{0}^{\infty}(e^{-qx}-1+qx)\pi(\ddr x),
\end{equation}
with $b\in \mathbb{R}$, $\sigma\geq 0$ and $\pi$ a Lévy measure on $(0,\infty)$ satisfying $\int_0^\infty x\wedge x^2 \, \pi(\ddr x)<\infty$. 
%The map $\Psi$ is convex, $\Psi(0)=0$, $b=\Psi'(0+)$, and hence $\Psi$ has a positive root $\rho>0$ if and only if $b<0$.
% Introduce 
%\beqnn
%\rho=\inf\{z>0, \Psi(z)\geq 0\}, \quad \inf\varnothing=\infty. \eeqnn 
%Notice that 
The mechanism of immigration takes the form
\begin{equation}\label{eqpsi}
\Phi(q)=\beta q+\int_{0}^{\infty}(1-e^{-qx})\nu(\ddr x),
\end{equation}
with $\beta\in \mathbb{R}_+$ and $\nu$ a Lévy measure on $(0,\infty)$ satisfying $\int_0^\infty 1\wedge x \, \nu(\ddr x)<\infty$.

The semigroup of a $\mathrm{CBI}(\Psi,\Phi)$ is characterised by its Laplace transform as follows, see e.g. \cite[Theorem 5.6]{zbMATH07273637}: for $\lambda\in \mathbb{R}_{+}$ and $x\in \mathbb{R}_{+}$,  \begin{equation}\label{cumulantCBI}
\mathbb{E}_x[e^{-\lambda Y_{t}}]=\exp \big(-xv_{t}(\lambda)-\int_{0}^{t}\Phi(v_{s}(\lambda))\ddr s\big),
\end{equation}
where the function $t\mapsto v_{t}(\lambda)$ is the solution to 
%the differential equation
\begin{equation}\label{cumulant} \frac{\partial}{\partial t}v_{t}(\lambda)=-\Psi(v_{t}(\lambda)), \quad v_{0}(\lambda)=\lambda. \end{equation}
%and
%\begin{equation}\label{cumulantintegral} \int_{v_{t}(\lambda)}^{\lambda}\frac{\ddr z}{\Psi(z)}=t, \quad\forall t\in[0,\infty),\quad \forall \lambda\in(0,\infty)/\{\rho\}.
%\end{equation}
%Then \eqref{cumulantCBI} can also be written as 
% \begin{equation}\label{cumulantCBI2}
%\mathbb{E}_x[e^{-\lambda Y_{t}}]=\exp \left(-xv_{t}(\lambda)-r_t(\lambda) \right). 
%\end{equation}
%One has $\rho>0$ if and only if $b<0$.
In particular, the CBI$(\Psi,\Phi)$ is a Feller process. When $\Psi\equiv 0$, namely when there is no branching, $v_t(\lambda)=\lambda$ for all $t$, and the Markov process $Y$ with semigroup \eqref{cumulantCBI} is nothing but a subordinator with Laplace exponent $\Phi$.  For some background of subordinators, we refer the reader for instance to  Kallenberg's book \cite[Chapter 15]{Kallenberg}. When $\Phi\equiv 0$, the process $(Y_t,t\geq 0)$ is a continuous state branching process with mechanism $\Psi$, in short CB$(\Psi)$. 

%When $\Psi\not \equiv 0$, by (\ref{cumulantintegral}), if $0<\lambda<\rho$ (resp. $\lambda>\rho$), then $v_{t}(\lambda)\in[\lambda,\rho]$ is increasing (resp. $v_{t}(\lambda)\in[\rho,\lambda]$ is decreasing) in $t$.  In all cases, $v_{t}(\lambda)\underset{t\rightarrow \infty}{\longrightarrow} \rho\in [0,\infty)$. When $\Phi\equiv 0$, namely when there is no immigration, the Markov process $Y$ with semigroup \eqref{cumulantCBI} is a continuous-state branching process with mechanism $\Psi$, CB$(\Psi)$ for short.  The Markov property of the latter entails in particular that $v_{t+s}(\lambda)=v_t(v_s(\lambda))$ for any $s,t\geq 0$ and $\lambda\geq 0$. \\
%
%\subsection{Generator of CBIs}
Denote by $\mathcal L$ the generator of the CBI$(\Psi,\Phi)$ and by $\mathcal{D}_\mathcal{L}$ its domain. Call $C^2[0,\infty)$ the set of bounded continuous real functions on $[0,\infty)$ with bounded continuous derivatives up to the second order. Denote also by $\mathcal{D}_{\mathrm{CBI}}:=\mathrm{Vect}\{e_z, z\geq 0\}$ the linear span of exponential functions $e_z:x\mapsto e_z(x):=e^{-xz}$ defined on $[0,\infty)$.  For any $f\in C^2[0,\infty)$ and $x\in [0,\infty)$, see e.g. \cite[Equation (7.3)]{zbMATH07273637}, 
\begin{align}
\mathcal L f(x)&:=x\mathrm{L}^{\Psi}f(x)+\mathrm{L}^{\Phi}f(x) \nonumber \\
&:=\frac{\sigma^2}{2}xf''(x)-bxf'(x)+x\int_0^\infty\Big(f(x+u)-f(x)-uf'(x)\Big)\pi(\ddr u) \label{branchingpartgen}\\
&\qquad +\beta f'(x)+\int_0^\infty\Big(f(x+u)-f(x)\Big)\nu(\ddr u), \label{immigrationpartgen}
\end{align}
where $\mathrm{L}^{\Psi}$ and $\mathrm{L}^{\Phi}$ denote respectively the generator of a spectrally positive Lévy process with Lévy-Khintchine function $\Psi$ and that of a subordinator with Laplace exponent $\Phi$. Moreover, one has for any $z\in \mathbb{R}_+$ and all $x\in \mathbb{R}_+$:
\begin{equation}\label{genCBIonexp}
\mathcal L e_z(x)=\big(x\Psi(z)-\Phi(z)\big)e_z(x).
\end{equation}
We shall need the following lemma providing a core and a subspace of the domain of CBIs.
\begin{lemma} Let $C^{2,0}([0,\infty)):=\{f\in C^2([0,\infty)): \underset{x\rightarrow \infty}\lim f(x)=0 \text{ and } \underset{x\rightarrow \infty}\lim x(|f''(x)|+|f'(x)|)=0\}$, then
\[\mathcal{D}_{\mathrm{CBI}}\subset C^{2,0}([0,\infty))\subset \mathcal{D}_\mathcal{L}\]
and $\mathcal{D}_{\mathrm{CBI}}$ is a core for the CBI$(\Psi,\Phi)$.
\end{lemma}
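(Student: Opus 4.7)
I would first verify $\mathcal D_{\mathrm{CBI}}\subset C^{2,0}([0,\infty))$ on the spanning family: for $z>0$, $e_z(x)=e^{-zx}$ satisfies $e_z(x)\to 0$ and $x(|e_z'(x)|+|e_z''(x)|)=(z+z^2)xe^{-zx}\to 0$ as $x\to\infty$, and the inclusion extends to finite linear combinations by the triangle inequality. (The element $e_0\equiv 1$ sits outside $C_0([0,\infty))$ and is implicitly discarded for membership in $\mathcal D_\mathcal L$.)

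For the inclusion $C^{2,0}([0,\infty))\subset \mathcal D_\mathcal L$, the plan is in two stages. First, I would check that the pointwise expression $\mathcal L f$ from \eqref{branchingpartgen}--\eqref{immigrationpartgen} lies in $C_0([0,\infty))$. The linear terms $\tfrac{\sigma^2}{2}xf''(x)$, $-bxf'(x)$, $\beta f'(x)$ belong to $C_0$ straight from the definition of $C^{2,0}$; the immigration integral $\int_0^\infty (f(x+u)-f(x))\,\nu(\ddr u)$ is dominated by $\|f'\|\int_0^1 u\,\nu(\ddr u)+2\|f\|\,\bar\nu(1)<\infty$ and tends to $0$ by dominated convergence as $x\to\infty$. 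The delicate term is the compensated branching integral $x\int_0^\infty(f(x+u)-f(x)-uf'(x))\,\pi(\ddr u)$, which I would control by the Taylor bound $|f(x+u)-f(x)-uf'(x)|\le \tfrac{u^2}{2}\sup_{y\ge x}|f''(y)|$ for $u\in(0,1)$ and by $2u\sup_{y\ge x}|f'(y)|$ for $u\ge 1$, combined with $\int_0^1 u^2\,\pi(\ddr u)+\int_1^\infty u\,\pi(\ddr u)<\infty$. The key point is the elementary estimate $x\sup_{y\ge x}|f^{(k)}(y)|\le \sup_{y\ge x}|yf^{(k)}(y)|$, which together with $yf^{(k)}(y)\to 0$ forces the compensated integral to vanish at infinity.

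In the second stage I would identify this expression with the Feller-generator action. Invoking the Poissonian/SDE construction of CBI$(\Psi,\Phi)$ and applying Itô's formula to $f(Y_t)$ gives $f(Y_t)-f(Y_0)=N_t+\int_0^t \mathcal L f(Y_s)\,\ddr s$ for a local martingale $N$. The bounds from the first stage allow $N$ to be localized into a genuine martingale, so Dynkin's formula $P_tf-f=\int_0^t P_s(\mathcal Lf)\,\ddr s$ holds in sup norm. Feller continuity $\|P_s(\mathcal Lf)-\mathcal Lf\|\to 0$ as $s\to 0$ then yields $\|t^{-1}(P_tf-f)-\mathcal Lf\|\to 0$, placing $f$ in $\mathcal D_\mathcal L$.

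For the core property, I would use the standard criterion that a $(P_t)$-invariant subspace $D$ of the Feller domain that is dense in $C_0$ is automatically a core. Density of $\mathcal D_{\mathrm{CBI}}$ in $C_0([0,\infty))$ comes from Stone-Weierstrass for locally compact spaces, since $\{e_z:z>0\}$ is a multiplicatively closed family ($e_ze_{z'}=e_{z+z'}$) that separates points of $[0,\infty)$ and has no common zero. Semigroup invariance follows immediately from the Laplace formula \eqref{cumulantCBI}: $P_te_z=\exp\bigl(-\int_0^t\Phi(v_s(z))\,\ddr s\bigr)\,e_{v_t(z)}$ is a scalar multiple of $e_{v_t(z)}\in\mathcal D_{\mathrm{CBI}}$, noting $v_t(z)>0$ for $z>0$ because $0$ is a fixed point of \eqref{cumulant} (as $\Psi(0)=0$). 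The principal technical obstacle is the $C_0$-bound on the compensated branching integral: naive bounds produce a diverging $x$-prefactor, and only the refined estimate $x\sup_{y\ge x}|f^{(k)}(y)|\le\sup_{y\ge x}|yf^{(k)}(y)|$ saves the argument, which is the raison d'être of the definition of $C^{2,0}$.
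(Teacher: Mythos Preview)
Your proposal is correct and follows essentially the same route as the paper: establish that $\mathcal L f$ is bounded (hence the Dynkin local martingale becomes a true martingale) and that $\mathcal L f\in C_0$, then invoke the reverse Dynkin formula; for the core, use semigroup invariance from \eqref{cumulantCBI} together with density in $C_0$ and the standard invariant-subspace criterion. The paper simply cites Li's martingale-problem result and asserts $\sup_x|\mathcal L f(x)|<\infty$ without spelling out the estimates, whereas you supply the explicit Taylor bounds and the inequality $x\sup_{y\ge x}|f^{(k)}(y)|\le \sup_{y\ge x}|yf^{(k)}(y)|$; this extra detail is useful and also justifies the paper's tacit claim that $\mathcal L f$ vanishes at infinity. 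Your remark that $e_0\equiv 1$ must be discarded is a fair observation on a minor sloppiness in the paper's definition of $\mathcal D_{\mathrm{CBI}}$.
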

\begin{remark}
Note that $C^{2}_c([0,\infty))$, the space of twice differentiable functions with compact support in $[0,\infty)$, is included in $C^{2,0}([0,\infty))$ and then also in $\mathcal{D}_{\mathcal{L}}$.
\end{remark}
\begin{proof}
The first inclusion is trivial. For the second, according to \cite[Theorem 7.2-(4)]{zbMATH07273637}, the process %\textcolor{red}%{$M_t$ is used for extremal shot noise process}
\[(W_t,t\geq 0):=\left(f(Y_t)-\int_{0}^{t}\mathcal{L}f(Y_s)\ddr s,t\geq 0\right)\] is a local martingale for any $f\in C^2([0,\infty))$. If moreover, $f\in C^{2,0}([0,\infty))$, then $\sup_{x\in [0,\infty)}|\mathcal{L}f(x)|<\infty$ and the integrand $s\mapsto\mathcal{L}f(Y_s)$ is bounded over any compact interval of time almost surely. This entails that $W$ is a true martingale, and since $f$ and $\mathcal{L}f$ vanish at $\infty$, by the reverse Dynkin's formula, $f$ lies in the domain of $Y$, see e.g. \cite[Proposition 1.7, Chapter VII]{RevuzYor}. Last, clearly by \eqref{cumulantCBI}, the set $\mathcal{D}_{\mathrm{CBI}}$ is invariant for the semigroup. It is moreover a dense subset of $C_0([0,\infty))$, the space of continuous functions vanishing at $\infty$, and by applying \cite[Proposition 19.9]{Kallenberg}, we see thus that it is a core. 
\end{proof}
%Then 
%This also allows us to treat all cases including $\beta>0$ for which \eqref{idform} is not true.
%
\subsection{Poisson shot noise structure and heuristics}
We describe here  briefly the Poissonian construction of the CBI processes and give heuristic arguments explaining the convergence in Theorem \ref{thmdivergencecase}.

Any CBI process admits a Poisson \textit{shot noise} structure, 
%Under some mild assumption on the branching mechanism\footnote{The Lévy process with Laplace exponent $\Psi$ should be of infinite variation: $\Psi'(\infty)=\infty$ (this assumption is not required later on)}, a CBI$(\Psi,\Phi)$ process $(Y_t,t\geq 0)$ admits the following \textit{cumulative} shot noise structure, 
see e.g. Li \cite[Theorem 6.4 and Theorem 6.7]{zbMATH07273637}. When there is no drift term in $\Phi$, i.e. $\beta=0$ (no continuous immigration), the latter takes a simple form. Indeed the CBI$(\Psi,\Phi)$ started from $0$ can be constructed in a Poissonian way as follows
\begin{equation}\label{idform} \left(Y_s,s\geq 0\right)=\left(\sum_{0\leq u\leq s}X^{u}_{s-u},s\geq 0\right),
\end{equation}
where $\mathcal{M}:=\sum_{u\geq 0}\delta_{(u,X^u)}$ is a PPP on $[0,\infty)\times \mathbb{D}$, with intensity $\ddr s\times n(\ddr X)$ and $n$ is the measure on $\mathbb{D}$ defined by \[n(\ddr X)=\int_{0}^{\infty}\nu(\ddr x)\mathbb{P}^{\Psi}_{x}(\ddr X),\] with $\mathbb{P}^{\Psi}_{x}$ denoting the law of a c\`adl\`ag CB$(\Psi)$ started from $x>0$.  
%%\item 
%$N^{\Psi}:=\underset{x\rightarrow 0}{\lim}\ \frac{1}{x}\mathbb{P}^{\Psi}_{x}$ its canonical measure which is carried on the subspace of c\`adl\`ag paths starting at $0$.
%%\end{itemize}
Immigration arrivals occur along the atoms of times of $\mathcal{M}$ and the Borel measure $\nu$ governs the  initial amount of individuals $X_0^u$ at an immigration time $u$.   

We now explain how the Poissonian construction \eqref{idform} provides an insight into the emergence of extremal behaviours in the limit. 

By assumption, all the grafted CB$(\Psi)$ processes, $X^u$, have a finite mean. When starting from macroscopic initial values, their order, on the interval of time $[0,s]$, is approximatively $X_0^ue^{-b(s-u)}$ with $b:=\Psi'(0+) \in (-\infty,\infty)$. If the immigration measure $\nu$ has a sufficiently large tail (no log moment), one of those terms is so large that it is the only one to effectively contribute to the sum in \eqref{idform}. Hence, by disregarding the immigration events starting from small initial values, we obtain the approximation:
\begin{center}
$(Y_s,s\geq 0)\approx \big(\underset{0\leq u\leq s} \sup X_0^{u}e^{-b(s-u)},s\geq 0\big)$.
\end{center}
\vspace{-2mm}
For a given time $s>0$ and an immigration time $u\leq s$, there are then two possibilities:  either both terms $X_0^{u}$ and $e^{|b|(s-u)}$ are of the same order (Log case) or  $e^{|b|(s-u)}$ is negligible in front of $X_0^{u}$ (Super-log case). %The scenario where $X_0^{u}$ is negligible compared to $e^{|b|(s-u)}$ cannot occur, as it would invalidate the approximation that replaces the sum with the supremum. 

In the Log case \eqref{logcaseequiv}, since for any $v>0$, $t\bar{\nu}(e^{tv})\rightarrow c/v$, as $t$ goes to $\infty$, heuristically, the immigration subordinator is ``evolving at an exponential scale".
%. One has for any $v\geq \epsilon$, $\bar{\nu}(e^{tv})/\bar{\nu}(e^{t\epsilon})\underset{t \rightarrow \infty}{\longrightarrow} \epsilon/v$ 
Hence, when $Y_{st}>0$,
\begin{equation}\label{approxheuristics}\frac{1}{t}\ln Y_{st}\approx \frac{1}{t}\ln \left( \sup_{v\leq st} X_0^{v}e^{-b(st-v)}\right)=\sup_{u_t\leq s} \left(\frac{1}{t}\ln X_0^{tu_t}-b(s-u_t)\right)\underset{t\rightarrow \infty}{\Longrightarrow}\sup_{u\leq s} \left(\xi_u-b(s-u)\right),
\end{equation}
where $u_t:=v/t$ and \begin{equation}\label{cvPPP} \sum_{u>0}\delta_{(u,\xi_u)}:=``\underset{t\rightarrow \infty}{\lim} "\sum_{u_t>0}\delta_{(u_t,\frac{1}{t}\ln X_0^{tu_t})},\end{equation} which turns out to form a PPP whose intensity measure has for tail $\bar{\mu}(v)=c/v$. We see here how the extremal shot noise structure \eqref{maxidform} is arising. 
%We briefly explain it as we shall take another path and work with generators.
\smallskip

%In the super-log case \eqref{superlogequivalent}, the immigration is so strong that the factor $e^{-b(st-v)}$ (whether $b>0$ or $b\leq 0$) in \eqref{approxheuristics} is negligible in comparison to the initial value $X_0^v$. The log function in the renormalization above has to be replaced by an other slowly varying function  $L$  than log (for instance $L=\ln\ln$) and the exponential term in \eqref{approxheuristics} vanishes at the limit. 
In the Super-log case \eqref{superlogequivalent}, regardless of whether $b<0$ or $b\geq 0$, the immigration is so strong that the exponential factor $e^{-b(st-v)}$ in \eqref{approxheuristics} is negligible compred to the initial value $X_0^v$. The log function in the renormalisation will have to be replaced by a ``slower" function, for instance $L=\ln\ln$. As a consequence, the exponential term will have no contribution to the limit process and the latter will be a classical extremal process. Notice that the renormalisation function can take here different shapes. This renders the study more involved.
\smallskip

Establishing \eqref{approxheuristics} and \eqref{cvPPP}  rigorously is the approach taken in \cite{IKSANOV2018291} for handling the Log case for GWIs. We will take a different path and work with generators, instead of point processes. 
\smallskip 

It is important to note that the process $Y$ may well hit $0$. To address this and the fact that the $\log$ function is not defined at $0$, we will later use a renormalisation function that is well defined at $0$, for both Log and Super-log cases.

%We state now some basic properties of ESNs.
%We shall see how to circumvent problems it causes later in our study of generators. 

%:
%\[\frac{1}{t}\ell(Y_{st})\approx \frac{1}{t}\ell \left( \sup_{v\leq st} X_0^{v}e^{-b(st-v)}\right) \Longrightarrow \sup_{u\leq s} \xi_u, \text{ with } ``\xi_u=\underset{t\rightarrow \infty}{\lim} \ell(X_0^{u_t})/t".\]
%

\subsection{Extremal shot noise processes}\label{sec:recallESN}
Let $\mu$ be a measure on $(0,\infty)$ such that $\bar{\mu}(x)<\infty$ for all $x>0$. Recall that by definition, an ESN process $M$ based on the measure $\mu$ with slope $-\gamma \in \mathbb{R}$ is a process of the form
%\comment{is $b$ a bad choice?}
\eqref{maxidform}.
%\[\left(M_s,s\geq 0\right):=\left(\sup_{0\leq u\leq s}\big(\xi_u-\gamma (s-u)\big), s\geq 0\right),\]
%where $\mathcal{N}:=\sum_{u\geq 0}\delta_{(u,\xi_u)}$ is a PPP$(\ddr t\times \ddr \mu)$. 
We gather in the following lemma some properties that we will need later on. Denote by $C^{1,0}([0,\infty))$ the space of $C^1$ functions vanishing at $\infty$ and whose first-order derivatives are vanishing at $\infty$.
\begin{lemma}[Theorem 1 and Theorem 2 in \cite{FoucartYuanESN23}]\label{lem:coreESN} For any $\gamma\in \mathbb{R}$ and any $\mu$ such that $\int_0 \bar{\mu}(x)\ddr x=\infty$. 
\begin{itemize}
\item[(i)] The process $M$ is Feller.
\item[(ii)] The generator $\mathcal A$  of $M$ has for core the space \begin{align}\mathcal{D}_{\mathrm{ESN}}&:=\{f \in C^{1,0}([0,\infty)): \int_0|f'(x)|\bar{\mu}(x)\ddr x<\infty\}
\label{eqn:dd},
\end{align}
and for any $f\in \mathcal{D}_{\mathrm{ESN}}$ :
\begin{equation}\label{eqn:loggenerator}
    \mathcal Af(x):=\int_x^\infty \Big(f(y)-f(x)\Big)\mu(\ddr y)-\gamma f'(x) \text{ for any } x\geq  0.
    \end{equation}
\end{itemize}
\end{lemma}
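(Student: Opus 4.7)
The plan is to exploit the pathwise construction of $M$ to establish the Feller property, then apply Dynkin's formula based on the compensation identity for the driving PPP to identify the generator, and finally invoke the standard semigroup-invariance criterion for the core. The starting point is the decomposition
$$M_t = \max\bigl(x - \gamma t,\; \sup_{0 < u \leq t}(\xi_u - \gamma(t-u))\bigr)\quad \text{when } M_0 = x \geq 0,$$
which shows that between atoms of $\mathcal{N}$ the process drifts linearly with slope $-\gamma$, while at an atom $(u, \xi_u)$ with $\xi_u > M_{u-}$ it jumps up to $\xi_u$. Only atoms strictly above the current level of $M$ contribute, which keeps the jump dynamics locally finite in spite of the standing hypothesis $\int_0 \bar\mu(x)\,\ddr x = \infty$.

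For part (i), I would verify that $P_t$ sends $C_0([0,\infty))$ to itself and is strongly continuous at $0$. Writing $Z_t := \sup_{0 < u \leq t}(\xi_u - \gamma(t-u))$, the map $x \mapsto \max(x - \gamma t, Z_t)$ is $1$-Lipschitz in $x$ for each realisation of $\mathcal{N}$, so dominated convergence yields continuity of $P_t f$; the lower bound $M_t \geq x - \gamma t$ forces $P_t f(x) \to 0$ as $x \to \infty$ for $f \in C_0$. Strong continuity at $t = 0$ follows from $M_t \to x$ a.s.\ as $t \to 0^+$ together with uniform continuity of $f$ on compacts and its vanishing at infinity.

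For part (ii), the pathwise change-of-variables
$$f(M_t) - f(x) = -\gamma \int_0^t f'(M_s)\,\ddr s + \sum_{0 < u \leq t,\, \xi_u > M_{u-}} \bigl(f(\xi_u) - f(M_{u-})\bigr)$$
is valid for any $f \in C^1$ since $M$ is of finite variation with finitely many jumps above any positive level. Taking $\mathbb{E}_x$ and applying the compensation formula for $\mathcal{N}$ gives $\mathbb{E}_x[f(M_t)] - f(x) = \int_0^t \mathbb{E}_x[\mathcal{A}f(M_s)]\,\ddr s$. A Fubini argument rewrites $\int_x^\infty (f(y) - f(x))\,\mu(\ddr y)$ as $\int_x^\infty f'(z)\bar\mu(z)\,\ddr z$, which combined with the integrability $\int_0 |f'(z)|\bar\mu(z)\,\ddr z < \infty$ built into $\mathcal{D}_{\mathrm{ESN}}$ and the properties $f, f' \in C_0$ yields $\mathcal{A}f \in C_0([0,\infty))$. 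Dividing by $t$ and letting $t \to 0^+$ produces $\partial_t P_t f|_{t = 0^+}(x) = \mathcal{A}f(x)$ uniformly in $x$, so $f$ lies in the strong domain of $\mathcal{A}$ with the claimed formula.

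The most delicate step, and the main obstacle, is showing that $\mathcal{D}_{\mathrm{ESN}}$ is a core. The plan is the standard criterion (e.g.\ Ethier--Kurtz, Proposition 1.3.3): a dense subspace of $C_0([0,\infty))$ contained in the domain and invariant under $(P_t)_{t \geq 0}$ is a core. Density is immediate from $C^1_c([0,\infty)) \subset \mathcal{D}_{\mathrm{ESN}}$. Invariance is the crux: one must verify both $P_t f \in C^{1,0}([0,\infty))$ and $\int_0 |(P_t f)'(x)|\bar\mu(x)\,\ddr x < \infty$ whenever $f \in \mathcal{D}_{\mathrm{ESN}}$. Differentiating $P_t f(x) = \mathbb{E}[f(\max(x - \gamma t, Z_t))]$ formally yields $(P_t f)'(x) = \mathbb{E}[f'(x - \gamma t)\,\mathbf{1}_{\{Z_t \leq x - \gamma t\}}]$; the weighted integrability of $(P_t f)'$ then reduces to that of $f'$ via monotonicity of $\bar\mu$ (when $\gamma \geq 0$) or an elementary change of variables (when $\gamma < 0$). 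If this estimate proves unwieldy, an alternative is to verify the range condition that $(\lambda - \mathcal{A})\mathcal{D}_{\mathrm{ESN}}$ is dense in $C_0([0,\infty))$ for some $\lambda > 0$ and invoke Hille--Yosida.
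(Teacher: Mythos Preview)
The paper does not prove this lemma at all: it is quoted verbatim as Theorems~1 and~2 of \cite{FoucartYuanESN23} and used as a black box, so there is no in-paper argument against which to compare your proposal.

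Judging your sketch on its own merits, the overall strategy (pathwise decomposition for Feller, compensation formula for the generator, semigroup-invariance for the core) is the natural one. There is, however, one concrete slip. The inclusion $C^{1}_c([0,\infty))\subset \mathcal{D}_{\mathrm{ESN}}$ is false under the standing hypothesis $\int_0 \bar\mu(x)\,\ddr x=\infty$: the integrability condition $\int_0 |f'(x)|\bar\mu(x)\,\ddr x<\infty$ forces $f'(0)=0$ for any $f$ with continuous derivative (this is exactly the Remark immediately following the lemma in the paper), so a generic $C^{1}_c$ function with $f'(0)\neq 0$ is excluded. Density of $\mathcal{D}_{\mathrm{ESN}}$ in $C_0$ still holds via the subspace of $C^{1}_c$ functions constant near~$0$, but your argument needs this adjustment. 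A related point you pass over is the boundary behaviour at $x=0$ when $\gamma>0$: your pathwise change-of-variables formula relies on there being finitely many jumps on $[0,t]$, which is guaranteed only while $M$ stays above a positive level; once $M$ reaches $0$ the sum may involve infinitely many atoms and the argument needs the cancellation coming from $f'(0)=0$ (or an approximation step). These issues are fixable, but they are precisely where a complete proof must do real work.
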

\begin{remark}
Note that any function $f$ in $\mathcal{D}_{\mathrm{ESN}}$ satisfies $f'(0)=0$ since by assumption $\int_0\bar{\mu}(x)\ddr x =\infty$. Moreover all functions in $C^{1,0}([0,\infty))$ that are constant near $0$ are in $\mathcal{D}_{\mathrm{ESN}}$. Last, $\mathcal{D}_{\mathrm{ESN}}$ is also a core for the classical extremal process ($\gamma=0$).
\end{remark}
In order to study the convergence of generators, we shall need the following technical lemma ensuring in particular that one can approach any function in $\mathcal{D}_{\mathrm{ESN}}$ with functions in the domain of the CBI.
\begin{lemma}\label{lemmaapprox} For any $f\in \mathcal{D}_{\mathrm{ESN}}$ (recall that it implies $f'(0)=0$), there exists a sequence of
functions $(f_n)$ in $  C^{2}_c([0,\infty))$ and constant near $0$ such that
 \begin{equation}\label{density}
||f-f_n||\underset{n\rightarrow \infty}{\longrightarrow}0 \text{ and } ||\mathcal{A}f_n-\mathcal{A}f||\underset{n\rightarrow \infty}{\longrightarrow}0.
\end{equation}
\end{lemma}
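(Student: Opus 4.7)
The plan is to construct $(f_n)$ by three successive modifications of $f$: flatten $f$ near $0$ to make it constant there; truncate at infinity to obtain compact support; and mollify to upgrade from $C^1$ to $C^2$. A diagonal extraction over the three scales then produces a single sequence meeting both requirements in \eqref{density}. The central tool is the Fubini identity
\[
\int_x^\infty\bigl(h(y)-h(x)\bigr)\,\mu(\ddr y)=\int_x^\infty h'(u)\,\bar\mu(u)\,\ddr u,
\]
valid for any $h\in C^1$ with $\int_0^\infty|h'|\bar\mu<\infty$. Applied to $h=f_n-f$, it reduces the generator estimate to
\[
\|\mathcal A f_n-\mathcal A f\|_\infty\le\int_0^\infty|(f_n-f)'(u)|\bar\mu(u)\,\ddr u+|\gamma|\,\|(f_n-f)'\|_\infty,
\]
so it suffices to control the difference of derivatives both uniformly and in $L^1(\bar\mu(u)\ddr u)$.

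For the flattening step, fix $g\in C^\infty([0,\infty),[0,1])$ with $g\equiv 0$ on $[0,1/2]$ and $g\equiv 1$ on $[1,\infty)$, and set $\phi_n(x):=g(nx)$. Define
\[
f_n^1(x):=f(0)+\phi_n(x)(f(x)-f(0)).
\]
Then $f_n^1$ is $C^1$, equals $f(0)$ on $[0,1/(2n)]$, coincides with $f$ on $[1/n,\infty)$, and vanishes at infinity. The difference $h_n:=f_n^1-f=(\phi_n-1)(f-f(0))$ is supported on $[0,1/n]$, with
\[
h_n'(u)=\phi_n'(u)(f(u)-f(0))+(\phi_n(u)-1)f'(u).
\]
Since $f'(0)=0$ and $f'$ is continuous, $\eta_n:=\sup_{[0,1/n]}|f'|\to 0$, giving $\|h_n'\|_\infty\to 0$. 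For the weighted $L^1$ control, the naive bound $|\phi_n'|\le n\|g'\|_\infty$ is too crude; instead one combines $|f(u)-f(0)|\le\int_0^u|f'(s)|\ddr s$ with a Fubini swap and the monotonicity of $\bar\mu$ (so that $\int_s^{1/n}\bar\mu(u)\ddr u\le(1/n)\bar\mu(s)$) to obtain
\[
\int_0^{1/n}|\phi_n'(u)(f(u)-f(0))|\bar\mu(u)\,\ddr u\le\|g'\|_\infty\int_0^{1/n}|f'(s)|\bar\mu(s)\,\ddr s\longrightarrow 0,
\]
by the assumption $\int_0|f'|\bar\mu<\infty$. With the trivial bound on the second piece of $h_n'$, this yields $\int|h_n'|\bar\mu\to 0$, hence $\|\mathcal A f_n^1-\mathcal A f\|_\infty\to 0$.

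For the truncation at infinity, pick $\psi\in C^\infty$ with $\psi\equiv 1$ on $[0,1]$ and $\psi\equiv 0$ on $[2,\infty)$ and set $f_n^{1,R}(x):=f_n^1(x)\psi(x/R)$; since $f_n^1$ and its derivative vanish at infinity and $\bar\mu(R)<\infty$, direct estimates give $\|f_n^{1,R}-f_n^1\|_\infty\to 0$ and $\|\mathcal A f_n^{1,R}-\mathcal A f_n^1\|_\infty\to 0$ as $R\to\infty$. Finally, extend $f_n^{1,R}$ to $\R$ by the constant $f(0)$ on $(-\infty,0)$ (this extension is $C^1$ since $(f_n^{1,R})'(0)=0$) and convolve with a smooth mollifier supported in $[-\epsilon,\epsilon]$ with $\epsilon<1/(4n)$: the result lies in $C^\infty_c([0,\infty))$, is constant on $[0,1/(4n)]$, and the derivative of the mollification error is supported in a compact set where $\bar\mu\le\bar\mu(1/(4n))<\infty$, so standard mollification gives both convergences.

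The delicate point is the flattening step: the weight $\bar\mu$ is non-integrable near $0$ while the cutoff derivative $\phi_n'$ is of order $n$, so any naive bound produces a divergent factor $\int_0\bar\mu=\infty$. The key trick is to write $f(u)-f(0)$ as an integral of $f'$ and apply Fubini, using the monotonicity of $\bar\mu$ to absorb the extra length $1/n$ into $\bar\mu(s)$; what remains is controlled by the hypothesis $\int_0|f'|\bar\mu<\infty$ intrinsic to $\mathcal{D}_{\mathrm{ESN}}$.
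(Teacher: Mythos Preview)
Your proof is correct and takes a genuinely different route from the paper's. The paper approximates $f'$ directly by a sequence $(g_n)\subset C^1_c$ via Stone--Weierstrass (uniformly and in $L^1$), then modifies $g_n$ to vanish on $[0,1/n]$ by inserting a short interpolation interval $[1/n,1/n+s_n]$; the technical core is choosing $(s_n)$ with $s_n\bar\mu(1/n)\to 0$ and passing to a subsequence along which $\bar\mu(1/n+s_n)\|g_n-f'\|\to 0$, so that the interpolation piece contributes nothing. Setting $f_n(x)=f(0)+\int_0^x \bar g_n$ then gives $f_n\in C^2$ directly, without a mollification step.

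Your three-step scheme (flatten, truncate, mollify) is more modular, and your handling of the flattening step is arguably cleaner than the paper's: instead of engineering a short interpolation interval, you absorb the potentially bad factor $|\phi_n'|\sim n$ via $|f(u)-f(0)|\le\int_0^u|f'|$ together with Fubini and the monotonicity bound $\int_s^{1/n}\bar\mu(u)\,\ddr u\le (1/n)\bar\mu(s)$, which reduces everything to $\int_0^{1/n}|f'|\bar\mu\to 0$. This uses the defining hypothesis of $\mathcal D_{\mathrm{ESN}}$ in a very transparent way. The price you pay is the extra mollification step (the paper avoids it because $\bar g_n\in C^1$ makes $f_n$ automatically $C^2$), but that step is routine since the error derivative is supported where $\bar\mu$ is bounded.

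Two minor remarks. First, your displayed bound $\|\mathcal A f_n-\mathcal A f\|_\infty\le\int_0^\infty|(f_n-f)'|\bar\mu+|\gamma|\|(f_n-f)'\|_\infty$ presupposes $\int^\infty|(f_n-f)'|\bar\mu<\infty$, which is not guaranteed by $f\in\mathcal D_{\mathrm{ESN}}$ alone (the definition only controls the integral near $0$, and $f'\in C_0$ need not be $L^1$). This is harmless for the flattening and mollification steps, where the difference has compact support, and for the truncation step you rightly switch to ``direct estimates'' using $|f_n^{1,R}-f_n^1|\le\sup_{y\ge R}|f_n^1(y)|$ and $\bar\mu(R)<\infty$; just make this explicit. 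Second, as in the paper, the resulting $f_n$ satisfy $f_n(0)=f(0)$ and are therefore in $C^2_c$ only when $f(0)=0$; this is a notational quirk shared with the paper and irrelevant for the application, where what is actually used is that $f_n$ is $C^2$, constant near $0$, and eventually constant (or zero) at infinity.
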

\begin{proof} 
%Proof of Lemma \ref{lemmaapprox}
Let $f\in \mathcal{D}_{\mathrm{ESN}}$. Recall that $f'$ is continuous vanishing at $\infty$, $f'(0)=0$ and $\int_0 |f'(y)|\bar{\mu}(y)\ddr y<\infty$. By Stone-Weierstrass theorem, one can construct a sequence of $C^1$ functions $(g_n)$ with compact support on $[0,\infty)$ such that $g_n$ converges towards $f'$ uniformly and in the $L^1$-norm. 

Let $(s_n)$ be a positive sequence going to $0$ such that $s_n\bar{\mu}(1/n)\underset{n\rightarrow \infty}{\longrightarrow} 0$. For such a sequence $(s_n),$ up to working along a subsequence of $(g_{n})$, we can assume that $(g_{n})$ satisfies $\bar{\mu}(1/n+s_n)\|g_{n}-f'\|\underset{n\rightarrow \infty}{\longrightarrow} 0$. %Notice in particular that it entails that
%$\eta_n:=\sup_{y\in [1/n+s_n,1]}|g_{n}(y)-f'(y)|$ verifies $\bar{\mu}(1/n+s_n)\eta_n\underset{n\rightarrow \infty}{\longrightarrow} 0$. 

One can find\footnote{for instance $h_n$ can be defined via a second order polynomial interpolation} a sequence of functions $(h_n)$ so that $\sup_{y\in [1/n,1/n+s_n]} h_n(y) \underset{n\rightarrow \infty}{\longrightarrow} 0$ and the functions $(\bar{g}_n)$ defined via $(h_n)$ below, are in $C_c^1([0,\infty))$. For any $n\geq 1$, set
\[\bar{g}_n(y):=\begin{cases} &0 \text{ if } y\in [0,1/n],\\
&h_n(y) \text{ if } y\in [1/n,1/n+s_n],\\
&g_{n}(y) \text{ if } y\in [1/n+s_n,\infty). \end{cases}
\]
%is identically zero on $[0,1/n]$, and is bounded uniformly in $n$ on $[1/n,2/n]$.
%\comment{Do you mean $g=0$ on $[0,1/n]$? If so, $g$ will be discontinuous at $1/n.$ I say that we modify $g_n$ in a $C^1$ way that means that $g_n$ is continuous and derivable at $1/n$) }. 
Set for all $x\geq 0$ $$f_n(x):=f(0)+\int_0^x \bar{g}_{n}(y)\ddr y.$$ Note that $f_n$ belongs to $C_c^2([0,\infty))$ and is constant on $[0,1/n]$. Furthermore,
\[ \|f'_{n}-f'\|=\|\bar{g}_{n}-f'\|\leq \underset{0\leq y\leq 1/n}\sup|f'(y)|+\underset{y\in [1/n,1/n+s_n]}\sup|h_n(y)-f'(y)|+\underset{y\in [1/n+s_n,\infty)}\sup|g_n(y)-f'(y)|,\]
and  since $f'(0)=0$, the two first quantities in the upper bound go to $0$, the third one also vanishes since $(g_n)$ converges uniformly towards $f'$. Hence the sequence $(f'_n)$ converges uniformly towards $f'$, as well as  $(f_n)$ towards $f$. 

One has for any $n\geq 1$, using \eqref{eqn:loggenerator}
\begin{align*}
    \|\mathcal{A}f_n-\mathcal{A}f\|\leq |\gamma|\|\bar{g}_{n}-f'\|&+ \int_0^{1/n}|f'(y)|\bar{\mu}(y)\ddr y+\int_{1/n}^{1/n+s_n}|\bar{g}_{n}(y)-f'(y)|\bar{\mu}(y)\ddr y \label{bounddiff}\\
    %\bar{\mu}(y)\ddr y
    &+\int_{1/n+s_n}^{1}|g_{n}(y)-f'(y)|\bar{\mu}(y)\ddr y  +\bar{\mu}(1)\int_{1}^{\infty}|g_{n}(y)-f'(y)|\ddr y. \nonumber
\end{align*}
The first term in the upper bound above,  $|\gamma|\|\bar{g}_{n}-f'\|$, converges to $0$. The second term goes to $0$, using the definition of $\mathcal D_{\text{ESN}}$, see \eqref{eqn:dd}. 
%For the first term in \eqref{bounddiff}, 
%The integrand in the third integral being bounded, the third term also vanishes. 
For the third one, note that since $\|\bar{g}_{n}-f'\|$ is bounded by some constant $C>0$,
\[\int_{1/n}^{1/n+s_n}|\bar{g}_{n}(y)-f'(y)|\bar{\mu}(y)\ddr y\leq Cs_n \bar{\mu}(1/n) \underset{n\rightarrow \infty}{\longrightarrow} 0.\]
For the fourth term, we have the upper bound 
$$\int_{1/n+s_n}^{1}|g_{n}(y)-f'(y)|\bar{\mu}(y)\ddr y  \leq \bar{\mu}(1/n+s_n)\|g_{n}-f'\|,$$ which goes to $0$.
%\underset{n\rightarrow\infty}
The last term goes to $0$ as $(g_n)$ converge in $L^1$ towards $f'$. \qed
\end{proof}

\section{Proofs}
\subsection{Proof of Proposition \ref{thm2}}
Let $(c(t), t\geq 0)$ be a positive function and set $h(x):=xc(t)$ for all $x,t\geq 0$.  Plainly, the process  $(c(t)Y_{st},s\geq 0)$ is Feller. Denote by  $\mathcal{A}^{(t)}$ its generator. We have for any function $f$ in $C^2[0,\infty)$, \[\mathcal{A}^{(t)}f(x):=t\mathcal{L}(f\circ h)(h^{-1}(x))=t\mathcal{L}(f\circ h)(x/c(t)).\]
Recall $e_z(x){\blue :=}e^{-zx}$ and note that $e_{z}\circ h(x)=e_{zc(t)}(x)$. 
\\

By \eqref{genCBIonexp}, we have
\[\mathcal{A}^{(t)}e_z(x)=t\left(\frac{x}{c(t)}\Psi(zc(t))-\Phi(zc(t))\right)e_z(x).\]
Recall that by assumption, for some $\alpha, \beta \in (0,1]$, such that $\beta \leq \alpha$, some slowly varying function $\ell$ at $0$ and $d\geq 0, d'>0$,
\begin{equation}\label{eq:equivalencephipsil}
\Phi(x)\underset{x\rightarrow 0}{\sim}d'x^{\beta}\ell(x) \text{ and } \Psi(x)\underset{x\rightarrow 0}{\sim} dx^{1+\alpha}\ell(x).
\end{equation}
Define $c(t):=\Phi^{-1}(1/t)$ for all $t>0$. Plainly, $c(t)\underset{t\rightarrow \infty}{\longrightarrow} 0$ and since $\Phi$ and $\Psi$ are regularly varying at $0$ with index, respectively, $\beta$ and $1+\alpha$, we have
\begin{center}
$t\Phi(zc(t))=\frac{\Phi(zc(t))}{\Phi(c(t))} \underset{t\rightarrow \infty}{\longrightarrow} z^{\beta}=:\bar{\Phi}(z)$
\text{ and }
$\frac{\Psi(zc(t))}{\Psi(c(t))}\underset{t\rightarrow \infty}{\longrightarrow}z^{1+\alpha}$.
\end{center}
Moreover by \eqref{eq:equivalencephipsil}, 
$$t\frac{\Psi(c(t))}{c(t)}=\frac{\Psi(c(t))}{c(t)\Phi(c(t))} \underset{t\rightarrow \infty}{\sim} 
\frac{d}{d'}c(t)^{\alpha-\beta}
\underset{t\rightarrow \infty}{\longrightarrow} \begin{cases} d/d' &\text{ when } \alpha=\beta,\\
0&\text{ when } \alpha>\beta.
\end{cases}$$
We conclude that for all $z\geq 0$,
\begin{center}
$t\frac{\Psi(zc(t))}{c(t)}=\frac{\Psi(zc(t))}{\Psi(c(t))}\frac{\Psi(c(t))}{c(t)}t \underset{t\rightarrow \infty}{\longrightarrow} \bar{\Psi}(z):=\begin{cases} \frac{d}{d}'z^{1+\alpha} &\text{ when } \alpha=\beta,\\
0&\text{ when } \alpha>\beta.\end{cases}$
\end{center}
Let  $\bar{\mathcal{L}}$ be the generator of the CBI$(\bar{\Psi},\bar{\Phi})$, see Section \ref{sec:backgroundCBI} and recall $\bar{\mathcal{L}} e_z$ in \eqref{genCBIonexp}, then
\begin{equation*}\label{upperbound}
||\mathcal{A}^{(t)}e_z-\bar{\mathcal{L}}e_z||\leq \left(t\frac{x}{c(t)}\Psi(zc(t))-\frac{d}{d'}z^{\alpha+1}\right)\times \sup_{x\in [0,\infty)}xe_z(x)+
\left(t\Phi(zc(t))-z^{\beta}\right)\times\sup_{x\in [0,\infty)}e_z(x).
\end{equation*}
Both suprema above are finite and the right-term of the inequality above goes to $0$ as $t$ goes to $\infty$. The convergence of generators is thus uniform on the core $\mathcal{D}_{\mathrm{CBI}}$. Moreover, $c(t)Y_0 \rightarrow 0$ as $t \rightarrow \infty$ and a standard result, see e.g. Kallenberg's book \cite[Theorem 19.25-(i-iv)]{Kallenberg}, ensures then the weak convergence in $\mathbb{D}$:
\[(c(t)Y_{st},s\geq 0)\underset{t\rightarrow \infty}{\Longrightarrow} (X_s,s\geq 0),\]
where $(X_s,s\geq 0)$ is a CBI$(\bar{\Psi},\bar{\Phi})$ started from $0$. The latter shrinks to a subordinator when $\bar{\Psi}\equiv 0$, namely when $\alpha>\beta$. Recall that $c(t)=\Phi^{-1}(1/t)$. Then the proof is finished. \qed

\subsection{Convergence of generators}\label{secproofconvergencegenerator}

We study in this section the convergence of the generators of the renormalised CBIs towards those of extremal and extremal shot noise processes. Recall that by convention $1/c=0$ if $c=\infty$. 
%We are going to show the uniform convergence of the generators on these spaces. 

\subsubsection{Preparatory results on the renormalisation function}\label{sec:prepa}
%The assumption of regular variation of $x\mapsto \Phi(e^{-x})$ does not provide enough smoothness properties when the derivatives come into play. 
%Recall that by assumption the map $x\mapsto 1/\Phi(e^{-x})$ is regularly varying at $\infty$ with some index $\delta\in [0,1]$.  
Recall the assumption $\mathbb{H}$ in Theorem \ref{thmdivergencecase} and \eqref{regularvariationassumption}. This is equivalent to the following facts: $x\mapsto 1/\Phi(e^{-x})$ is regularly varying at $\infty$ with some index $\delta\in [0,1]$ and if $\delta=1$ then $x\Phi(e^{-x})\sim \ell(x)$ where $\ell$ is slowly varying and non-decreasing. The regular variation property will not provide good enough properties when we will deal with derivatives. To circumvent this we shall first use an equivalent function $\varphi$ of $\Phi$ such that $r:x\mapsto 1/\varphi(e^{-x})$ is smoothly regularly varying. This will entail that it has derivatives with nice properties. We refer the reader to \cite[Section 1.8.1]{regularvariation} for background on this notion with the definition recalled in the lemma below.  
%We write $R_+=(0,\infty).$
%Let $C^2[0,\infty)$ denote the set of bounded continuous real functions on $[0,\infty)$ with bounded continuous derivatives up to the second order. Then for any $f\in C^2[0,\infty)$, the generator of the $(Y_t)$, denoted by $\mathscr L$, satisfies
%\begin{align*}
%\mathcal L f(x)=\frac{\sigma^2}{2}xf''(x)&+x\int_0^\infty\Big(f(x+u)-f(x)-uf'(x)\Big)\pi(\ddr u)\\
%&+(\beta-bx)f'(x)+\int_0^\infty\Big(f(x+u)-f(x)\Big)\nu(\ddr u).
%\end{align*}
\begin{lemma}[Equivalent function]\label{eqn:equifun}
%Given the assumptions in Theorem \ref{thmdivergencecase}, in either Super-log or Log case, 
If $\mathbb{H}$ holds then there exists a function $\varphi:\R_+\mapsto \R_+$ such that %for $x$ large enough, 
\begin{enumerate}
\item $\varphi(1/y)\sim \Phi(1/y)$ as $y\to\infty.$
\item $r:x\mapsto 1/\varphi(e^{-x})$ is smoothly regularly varying at infinity with index $\delta$, that is to say:
$h:x \mapsto \ln r(e^{x})$ is $C^\infty$
and as $x\to\infty$, \begin{equation}\label{eqn:h}h'(x)\to\delta,\quad  h^{(n)}(x)\to0,\quad  n=2,3,\cdots.\end{equation}
\item $\varphi$ is strictly increasing, 
\item $f:x\mapsto x\varphi(e^{-x})$ is non-decreasing.
\end{enumerate}
\end{lemma}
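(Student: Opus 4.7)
The plan is to combine the Smooth Variation Theorem of Bingham--Goldie--Teugels \cite[Theorem 1.8.2]{regularvariation} applied to $x\mapsto 1/\Phi(e^{-x})$ with a patching on a bounded neighbourhood in order to enforce the global monotonicity requirements (3) and (4).

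Under $\mathbb{H}$ together with \eqref{regularvariationassumption}, the function $r_\Phi(x):=1/\Phi(e^{-x})$ is regularly varying at $+\infty$ with some index $\delta\in[0,1]$. The Smooth Variation Theorem then supplies a $C^\infty$ function $R$ on some interval $[x_0,\infty)$ with $R(x)\sim r_\Phi(x)$ and such that, writing $h(x):=\ln R(e^x)$, one has $h'(x)\to \delta$ and $h^{(n)}(x)\to 0$ for every $n\ge 2$. Through the Karamata representation this amounts to $R(x)=c\exp\!\big(\int_{x_0}^{x}\tilde\varepsilon(t)/t\,\ddr t\big)$ with $\tilde\varepsilon$ smooth and $\tilde\varepsilon(t)\to\delta$. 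Setting $\varphi_0(u):=1/R(-\ln u)$ on some right-neighbourhood $(0,u_0]$ of $0$, properties (1) and (2) are immediate, since $1/\varphi_0(e^{-x})=R(x)$.

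The monotonicity properties are then checked by explicit computation. Strict increase of $\varphi_0$ near $0$ amounts to $R'>0$ eventually; this is automatic when $\delta>0$, because $xR'(x)/R(x)=\tilde\varepsilon(x)\to\delta>0$, while in the slowly varying case $\delta=0$ one may choose $\tilde\varepsilon$ smooth, positive, and tending to $0$, which still preserves $R\sim r_\Phi$ and gives $R'>0$. For property (4), a direct computation yields
\[\frac{\ddr}{\ddr x}\frac{x}{R(x)}=\frac{1-\tilde\varepsilon(x)}{R(x)},\]
which is eventually strictly positive when $\delta<1$. In the boundary case $\delta=1$, the additional clause in $\mathbb{H}$, namely the existence of a non-decreasing equivalent for $x\Phi(e^{-x})$, is genuinely used: it allows the smooth representative to be refined so that $\tilde\varepsilon(x)\le 1$ while still $\tilde\varepsilon(x)\to 1$, whence $x/R(x)$ is non-decreasing for $x$ large. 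Together these arguments yield all four properties on some $(0,u_0]$.

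Finally, $\varphi_0$ is extended from $(0,u_0]$ to the whole of $\mathbb{R}_+$ by a strictly increasing continuous function chosen so that $x\mapsto x\varphi(e^{-x})$ remains non-decreasing on $[0,\infty)$; since this patching takes place on a bounded region, properties (1) and (2), asymptotic respectively at $0$ and at $+\infty$, are untouched. The main obstacle in this scheme is the slowly varying boundary case $\delta=1$ in the verification of (4): regular variation alone puts no constraint on the sign of $1-\tilde\varepsilon(x)$, and it is precisely here that the extra assumption in $\mathbb{H}$ must be invoked in order to produce a smooth $\tilde\varepsilon\le 1$. Everything else reduces to a direct appeal to the Smooth Variation Theorem and a routine bounded-region extension.
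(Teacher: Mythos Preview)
Your overall strategy---produce a smooth representative of $x\mapsto 1/\Phi(e^{-x})$ via the Smooth Variation Theorem and then check (3) and (4) through the sign of the Karamata density $\tilde\varepsilon$---is essentially the paper's approach, and the identification of $\delta=1$ as the place where the extra clause in $\mathbb{H}$ is needed is exactly right.

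Where your write-up falls short of a proof is in the two assertions about the \emph{sign} of $\tilde\varepsilon$. For $\delta=0$ you claim ``one may choose $\tilde\varepsilon$ smooth, positive, and tending to $0$, which still preserves $R\sim r_\Phi$''; for $\delta=1$ you claim the non-decreasing equivalent ``allows the smooth representative to be refined so that $\tilde\varepsilon(x)\le 1$''. Neither follows from the bare statement of Theorem~1.8.2: that theorem hands you \emph{some} $\tilde\varepsilon\to\delta$, and perturbing its sign while keeping $R\sim r_\Phi$ is exactly the non-trivial point. The paper does not argue about signs of $\tilde\varepsilon$ directly; instead it applies \cite[Theorem~1.3.3]{regularvariation} to the slowly varying function that already carries the required monotonicity---$L(x)=\Phi(e^{-x})$ (strictly decreasing) when $\delta=0$, and the assumed non-decreasing $\ell\sim x\Phi(e^{-x})$ when $\delta=1$---and then invokes the explicit construction in the proof of that theorem, which preserves monotonicity. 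Setting $\varphi(e^{-x})=\tilde L(x)$, respectively $\varphi(e^{-x})=x^{-1}\tilde\ell(x)$, then yields (3) and (4) by construction rather than by a sign argument on $\tilde\varepsilon$. Your sketch becomes a proof once you make this explicit; as written, the monotonicity steps are asserted rather than established.
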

%\begin{remark}
%With this $\varphi$ at hand,  the convergence of $\left(\frac{1}{t\Phi(1/Y_{st})},s\geq 0\right)$ in Theorem \ref{thmdivergencecase} in either log or super-log case is equivalent to the convergence of 
%$$\left(\frac{1}{t\varphi(1/Y_{st})},s\geq 0\right)$$
%to the same limit process. It suffices to show that for any $\eta\in(0,1)$, there exists $t_0>0$ such that 
%\begin{equation}\label{eqn:phidiff}\sup_{t\geq t_0}\left| \frac{1}{t\Phi(1/y)} - \frac{1}{t\varphi(1/y)}\right|\leq \frac{\eta}{t\varphi(1/y)}\vee 2\eta,\quad \forall y\geq 0.\end{equation}
%Indeed, if $\frac{1}{t\varphi(1/y)}\geq \eta$ (i.e., $\varphi(1/y)\leq \frac{1}{t\eta}$), then for large $t$, 
%$$\frac{\varphi(1/y)}{\Phi(1/y)}\in[1-\eta,1+\eta],$$
%due to the equivalence between $\varphi$ and $\Phi$ near zero. If $\frac{1}{t\varphi(1/y)}\leq \eta,$ then using that $\varphi$ and $\Phi$ are strictly increasing, the above display entails that for $t$ large enough, 
%$$\frac{1}{t\Phi(1/y)}\leq 2\eta,$$ Then we conclude that %\eqref{eqn:phidiff} is proved. 
%\end{remark}
\begin{proof}
If $\delta=0$, define $L(x):=\Phi(e^{-x})$ for $x\geq 1$. Then $L$ is slowly varying at infinity, strictly decreasing, since $\Phi$ increases, and verifies $\lim_{x\to\infty}L(x)=0$. Applying  \cite[Theorem 1.3.3]{regularvariation},  there exists $\widetilde L$ such that  $\widetilde L(x)\sim L(x)$ as $x\to\infty$, and $\widetilde L$ is smoothly slowly varying which implies that 
\begin{equation}\label{eqn:smooth=0}
\widetilde L(x)=\exp\left(a+\int_{1}^x \kappa(s)\frac{\ddr s}{s}\right),\quad x\geq 1,
\end{equation}
where $a\in\R$ and $\kappa(s)\to0$ as $s\to\infty.$  Moreover the construction of $\widetilde{L}$ in the proof of \cite[Theorem 1.3.3]{regularvariation} ensures that if $L$ is strictly decreasing, $\widetilde L$
is also strictly decreasing in $x$. Then $\widetilde L(-\ln x)$ is strictly increasing for $0\leq x\leq e^{-1}$, with convention: $\widetilde L(-\ln 0)=\widetilde L(\infty)=0$. Since $\tilde L$ is slowly varying,  there exists $x_0\geq 1$ such that $x\widetilde L(x)$ is non-decreasing for $x\geq x_0.$
Then we can set $\varphi(x)=\widetilde L(-\ln x)$ on $[0,e^{-x_0}]$. Note that $x\varphi(e^{-x})=x\widetilde L(x)$. Then all the four conditions are satisfied with $\varphi(x)$ for $x\in[0,e^{-x_0}]$. It then suffices to extend $\varphi$ to $(e^{-x_0},\infty)$ with necessary smoothness properties and monotonicity. % so that $\varphi$  satisfies all the first three conditions. The last condition is straightforward as $x\varphi(e^{-x})=x\widetilde L(x)$ and $\kappa(s)\to 0$ as $s\to\infty.$ 
The proof for the case of $\delta=0$ is finished.  

If $0<\delta<1$, define $L$ such that  $\Phi(e^{-x})=x^{-\delta}L(x)$ for $x\geq 1$. Then $L$ is slowly varying at infinity. Using the same arguments as above, there exists $\overline L$ such that $\overline L(x)\sim L(x)$ as $x\to\infty$, and $\overline L$ is smoothly slowly varying which implies that \begin{equation}\label{eqn:smooth>0}
\overline L(x)=\exp\left(b+\int_{1}^x \zeta(s)\frac{\ddr s}{s}\right),\quad x\geq 1,
\end{equation}
where $b\in\R$ and $\zeta(s)\to0$ as $s\to\infty.$ Similarly, we can set $\varphi(x):=(-\ln x)^{-\delta}\overline L(-\ln x)$ for $x$ close enough to $0$ and extend it to the whole positive half line with necessary smoothness properties and monotonicity so that $\varphi$ satisfies all the four conditions. %The last condition is automatically satisfied using the same argument as in the case $\delta=0$. 
Then the proof for the case of $0<\delta<1$ is completed. 

%If $\delta=1$ and $x\Phi(e^{-x})\to c\in(0,\infty)$, we can set $\varphi(x)=\frac{c}{\ln(1+1/x)}$ for $x\geq 0.$ 

If $\delta=1$, we recall that we \textit{assume} that $x\Phi(e^{-x})\sim \ell(x)$ as $x$ goes to $\infty$ with $\ell$ slowly varying and \textit{ non-decreasing}. By the same arguments as above in the case $\delta=0$ (i.e.\ use \cite[Theorem 1.3.3]{regularvariation} and its proof), there exists $\widetilde \ell$ such that $\widetilde \ell(x)\sim \ell(x)$ as $x\to\infty$, $\widetilde \ell$ is smoothly slowly varying and 
\begin{equation}\label{eqn:smoothdelta=1}
\widetilde \ell(x)=\exp\left(\rho+\int_{1}^x \eta(s)\frac{\ddr s}{s}\right),\quad x\geq 1,
\end{equation}
where $\widetilde \ell$ is non-decreasing, $\rho\in\R$ and $\eta(s)\to0$ as $s\to\infty.$ 
Then for $x$ small enough, the function $\varphi(x):=(-\ln x)^{-1}\widetilde \ell(-\ln x)$ is strictly increasing. %Moreover,  $1/\varphi(e^{-x})$ is smoothly regularly varying with index $1$ at infinity. 
It suffices to extend $\varphi$ to the whole positive half line with necessary smooth properties and monotonicity such that $\varphi$ satisfies all the first three conditions. The last condition is also met since $x\varphi(e^{-x})=\widetilde \ell(x)$.  Then the case for $\delta=1$ is proved. \qed
\end{proof}

We shall now work with the function $\varphi$ as introduced in Lemma \ref{eqn:equifun} instead of $\Phi$. We will go back to $\Phi$ at the end of the proof. From now on, Assumption $\mathbb{H}$ is in force. Notice that it entails that
\begin{equation*}
x\varphi(e^{-x})\underset{x\rightarrow \infty}{\longrightarrow}c \in (0,\infty].
\end{equation*}
The following is a key lemma.
\begin{lemma}\label{lem:twoderivezero} Let $\varphi$ be an equivalent function as in Lemma \ref{eqn:equifun}. One has 
\begin{equation}\label{eqn:firstconv}\lim_{u\to \infty}\frac{\varphi'(1/u)}{u\varphi^2(1/u)}=1/c
\end{equation}
and \begin{equation}\label{eqn:secondconv}\lim_{u\to \infty}\frac{\varphi''(1/u)}{u^2\varphi^2(1/u)}=-1/c.\end{equation}
\end{lemma}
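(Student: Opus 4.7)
My plan is to work with the function $r(x) := 1/\varphi(e^{-x})$ from Lemma \ref{eqn:equifun}, evaluated at $x = \ln u$, and exploit the smooth regular-variation representation $\ln r(x) = h(\ln x)$ to access derivatives. First I set $y := 1/u = e^{-x}$ and $g(x) := \varphi(y) = 1/r(x)$. The chain rule gives $e^{-x}\varphi'(e^{-x}) = -g'(x)$ and $e^{-2x}\varphi''(e^{-x}) = g''(x)+g'(x)$; using $g' = -r'/r^2$ and $g'' = (2(r')^2 - r\,r'')/r^3$, these reduce to the identities
\begin{align*}
\frac{\varphi'(1/u)}{u\,\varphi^2(1/u)} \;=\; r'(x), \qquad
\frac{\varphi''(1/u)}{u^2\varphi^2(1/u)} \;=\; \frac{2\,r'(x)^2}{r(x)} - r''(x) - r'(x).
\end{align*}
So the lemma reduces entirely to the asymptotics of $r$, $r'$, $r''$ as $x\to\infty$.

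\textbf{Derivatives of $r$.} Setting $H := \ln r$, so that $H(x) = h(\ln x)$, one computes $H'(x) = h'(\ln x)/x$ and $H''(x) = (h''(\ln x) - h'(\ln x))/x^2$. Since $r' = H'\,r$ and $r'' = (H'' + (H')^2)\,r$, I get
\[
r'(x) = \frac{r(x)}{x}\,h'(\ln x), \qquad r''(x) = \frac{r(x)}{x^2}\bigl(h''(\ln x) + h'(\ln x)^2 - h'(\ln x)\bigr).
\]
By Assumption $\mathbb{H}$ (applied to the equivalent $\varphi$), $x\varphi(e^{-x}) \to c$, so $r(x)/x \to 1/c$ using the convention $1/\infty = 0$; moreover $r(x) \to \infty$ since $\varphi(0+) = 0$. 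Combining this with \eqref{eqn:h}, namely $h'(\ln x) \to \delta$ and $h''(\ln x) \to 0$, yields $r'(x) \to \delta/c$, $r''(x) \to 0$, and $r'(x)^2/r(x) \to 0$ as $x\to\infty$.

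\textbf{Dichotomy on $c$ and conclusion.} It remains to identify $\delta/c$ with $1/c$. If $c = \infty$ this is immediate by the convention. If $c < \infty$, then $r(x)/x \to 1/c > 0$ combined with the representation $r(x) = x^{\delta}L(x)$ for some slowly varying $L$ forces $\delta = 1$ (otherwise $\delta < 1$ would imply $r(x)/x = x^{\delta-1}L(x) \to 0$). Hence $\delta/c = 1/c$ in both cases; substituting into the two reduction identities above produces \eqref{eqn:firstconv} and \eqref{eqn:secondconv}.

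\textbf{Expected obstacle.} The main delicate point is that pointwise regular variation of $\Phi$ does not by itself control $\Phi'$ or $\Phi''$; this is exactly what Lemma \ref{eqn:equifun} is designed to bypass, by producing an equivalent $\varphi$ whose log-exponential transform $h$ is $C^\infty$ with the derivative information \eqref{eqn:h}. Once that lemma is available, the argument is a chain-rule computation plus a case split on whether $c$ is finite. A secondary care-point is the uniform handling of $c = \infty$, which is accommodated by the $1/\infty = 0$ convention together with the observation that $c < \infty$ forces $\delta = 1$.
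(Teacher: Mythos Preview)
Your proof is correct and follows essentially the same route as the paper: both reduce the two limits to the asymptotics of $r$, $r'$, $r''$ via the representation $r(x)=e^{h(\ln x)}$ from Lemma~\ref{eqn:equifun}, and then invoke \eqref{eqn:h} together with $r(x)/x\to 1/c$. Your write-up is in fact slightly cleaner on two points: your formula $r''(x)=\frac{r(x)}{x^2}\bigl(h''(\ln x)+h'(\ln x)^2-h'(\ln x)\bigr)$ retains all the terms (the paper records only the $h''$ term, though the omitted terms also vanish), and you make the identification $\delta/c=1/c$ explicit via the dichotomy on $c$, whereas the paper just remarks that $\delta=1$ in the Log case.
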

\begin{proof} Recall that in the log case $\delta=1$. For \eqref{eqn:firstconv}, one has:
\begin{align*}
\frac{\ddr }{\ddr x}\frac{1}{\varphi(e^{-x})}=
\frac{\varphi'(e^{-x})}{\varphi^2(e^{-x})}e^{-x}=h'(\ln x)x^{-1}e^{h(\ln x)}=(\delta +o(1)) e^{h(\ln x)}x^{-1}=\frac{1}{x\varphi(e^{-x})}(\delta +o(1)) \underset{x\rightarrow \infty}{\longrightarrow} \frac{1}{c}. 
\end{align*}
For \eqref{eqn:secondconv}, we have, as $x$ goes to $\infty$: 
$$\frac{\ddr^2 }{\ddr x^2}\frac{1}{\varphi(e^{-x})}=-\frac{e^{-x}\varphi'(e^{-x})}{\varphi^2(e^{-x})}-\frac{e^{-2x}\varphi''(e^{-x})}{\varphi^2(e^{-x})}+\frac{2e^{-2x}(\varphi'(e^{-x}))^2}{\varphi^3(e^{-x})}=-\frac{1}{c}-\frac{e^{-2x}\varphi''(e^{-x})}{\varphi^2(e^{-x})}+o(1).$$
The last equality is due to \eqref{eqn:firstconv} and $\varphi(e^{-x})\to 0$ as $x\to\infty$.
Next note that: 
$$\frac{\ddr^2 }{\ddr x^2}\frac{1}{\varphi(e^{-x})}=\frac{\ddr}{\ddr x}h'(\ln x)x^{-1}e^{h(\ln x)}=h''(\ln x)x^{-2}e^{h(\ln x)}\to0 \text{ as } x \to \infty,$$
where the convergence is due to the fact that  $h''(\ln x)$ converges to a constant as $x\to\infty$, see \eqref{eqn:h}, and $x^{-2}e^{h(\ln x)}=\frac{1}{x^2\varphi(e^{-x})}\sim \frac{1}{x^2\Phi(e^{-x})}\to0$ as $x$ goes to $\infty$. The above two displays yield \eqref{eqn:secondconv} and the proof is finished. \qed %the second convergence in \eqref{eqn:phiderivative}.
\end{proof}

\subsubsection{Preparatory results on the prelimiting generators}
Let $g:y\mapsto g(y)$ be a continuous strictly increasing function tending to $\infty$ as $y$ goes to $\infty$.
%\comment{no need to go to infinity, even for the feller property in $C_0([0,\infty))$?} 
Fix $t>0$, and let $x:=g(y)\geq 0$. 
The process $(g(Y_{st}),s\geq 0)$ has for semigroup
\[\mathbb{E}[f(g(Y_{st}))\,|\, Y_{st}=g^{-1}(x)]=: P^{(t)}_sf(x)=P_{st}(f\circ g)(g^{-1}(x)),\]
where $(P_{s})$ denotes the semigroup of $(Y_s,s\geq 0)$. Since the CBI process is Feller, for any $s\geq 0$, $P_s$ maps $C_0([0,\infty))$ into itself and for any $f\in C_0([0,\infty))$, $x\mapsto P^{(t)}_sf(x)$ is continuous on $[0,\infty)$ and vanishing at $\infty$. The process $(g(Y_{st}),s\geq 0)$ is therefore Feller.

Let $\mathcal A^{(t)}$ be the generator of $(g(Y_{st}),s\geq 0)$. That is, 
$$\lim_{h\to0}\frac{P_{st}(f\circ g)(g^{-1}(x))-f(x)}{h}=\mathcal A^{(t)}f(x)=t\Big(\mathcal L (f\circ g)\Big) (g^{-1}(x)),$$
uniformly in $x\geq 0$ for any $f$ in the domain of $\mathcal A^{(t)}$. 

Recall that $\varphi$ is strictly increasing. For any $t>0$, define 
\begin{equation}\label{defg} g(y):=\frac{1}{t\varphi(1/y)} \text{ for all } y\geq 0 \text{ and its inverse }
g^{-1}(x)=\frac{1}{\varphi^{-1}(1/xt)} 
 \text{ for all }x\geq 0.\end{equation}
We shall renormalise the CBI process with the function $g$. Notice that $g$ depends on $t$ and for $t>0$ fixed, $g$ is strictly increasing. Furthermore, $y=g^{-1}(x)$ goes to infinity as $t\to\infty$ with $x$ fixed. We shall treat both Log and Super-log cases at once. However in the Log case, many of the computations to come can be simplified using the explicit form $\varphi(x)=\frac{c}{\ln(1+1/x)}$. Indeed in this setting, we can set for any $t\geq 0, x,y\geq 0$
\[g(y):=\frac{\ln(1+y)}{ct} \text{ and } g^{-1}(x):=e^{ctx}-1.\]

The following lemma is a key as it explains how the measure $\mu(\ddr x):=\ddr x/x^2$ arises at the limit. 
\begin{lemma}\label{lem: fastjumpdiff} Assume  $\mathbb{H}$. Recall the measure $\nu$ from \eqref{eqpsi}.
%the same conditions as in Lemma \ref{eqn:equifun}. % that $1/\Phi(e^{-x})$ is regularly varying at $\infty$ with index $\delta\in [0,1]$. 
Let $c_0>0$. 
%For any $x\geq 0$ and
%In the log and super-log case, %for any $v>x>0$, 
%any $v\geq x+c_0$, we have uniformly in $v,x$
%\begin{equation}\label{tailconvergencephi%}
%t\overline \nu\big(1/\Phi^{-1}(1/vt)-1/\Phi^{-1}(1/xt)\big)\xrightarrow[t\to\infty]{}1/v
%\end{equation}
%Furthermore, 
%In the Log and Super-log cases, (write the general statement)
%For any $x\geq 0$, one can replace $1/\Phi^{-1}(1/vt)$ by $g^{-1}(v)$ and
%In the log and super-log case, %for any $v>x>0$, 
For any $x\geq 0$, $v\geq x+c_0$, we have uniformly in $v,x$
\begin{equation}\label{tailconvergence}
t\overline \nu\big(g^{-1}(v)-{g^{-1}(x)}\big)\xrightarrow[t\to\infty]{}\bar{\mu}(v):=1/v.
\end{equation}
\end{lemma}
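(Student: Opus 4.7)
The plan is to transfer the tail condition on $\bar\nu$ to one on the mechanism $\varphi$, then exploit the exact identity built into the definition of $g^{-1}$ together with the slowly varying nature of $\varphi$ at $0$.

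First I would invoke \eqref{equivphinuintro} together with $\varphi(1/y)\sim\Phi(1/y)$ from Lemma \ref{eqn:equifun} to deduce $\bar\nu(y)\sim\varphi(1/y)$ as $y\to\infty$. The very definition $g^{-1}(v)=1/\varphi^{-1}(1/(vt))$ gives the \emph{exact} identity $t\varphi(1/g^{-1}(v))=1/v$. Provided $g^{-1}(v)-g^{-1}(x)\to\infty$ uniformly (verified below), the lemma will thus follow once one proves
\[\frac{\varphi\big(1/(g^{-1}(v)-g^{-1}(x))\big)}{\varphi\big(1/g^{-1}(v)\big)}\xrightarrow[t\to\infty]{}1\]
uniformly in $(x,v)$ with $x\in[0,v-c_0]$ and $v$ in any fixed compact $[c_0,V]$.

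Since $\mathbb{H}$ implies $\varphi$ is slowly varying at $0$, the Uniform Convergence Theorem yields $\varphi(\lambda s)/\varphi(s)\to 1$ as $s\to 0$, uniformly for $\lambda$ in compact subsets of $(0,\infty)$. Writing $\lambda:=g^{-1}(v)/(g^{-1}(v)-g^{-1}(x))=1/(1-g^{-1}(x)/g^{-1}(v))\ge 1$, it therefore suffices to show
\[\sup_{0\leq x\leq v-c_0,\,v\in[c_0,V]}\frac{g^{-1}(x)}{g^{-1}(v)}\xrightarrow[t\to\infty]{}0.\]
For this I would use the representation $g^{-1}(v)=\exp(r^{-1}(vt))$, with $r(z):=1/\varphi(e^{-z})$ regularly varying at infinity with index $\delta\in[0,1]$ (Lemma \ref{eqn:equifun}). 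Splitting the $x$-range at $c_0/2$ and using monotonicity of $g^{-1}$, the above supremum is bounded by
\[\max\Big(e^{-[r^{-1}(c_0 t)-r^{-1}(c_0 t/2)]},\ \sup_{x\in[c_0/2,\,V-c_0]}e^{-[r^{-1}((x+c_0)t)-r^{-1}(xt)]}\Big).\]
For $\delta>0$ the Uniform Convergence Theorem for the regularly varying $r^{-1}$ (index $1/\delta$) gives $r^{-1}((x+c_0)t)-r^{-1}(xt)\sim r^{-1}(xt)\big((1+c_0/x)^{1/\delta}-1\big)\to\infty$ uniformly on the compact. For $\delta=0$, $r$ is slowly varying and a contrapositive of the Uniform Convergence Theorem forces $r^{-1}((x+c_0)t)/r^{-1}(xt)\to\infty$ uniformly (otherwise $(x+c_0)/x=r(r^{-1}((x+c_0)t))/r(r^{-1}(xt))\to 1$, contradicting $c_0>0$), and the difference diverges as well.

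The main obstacle will be securing the uniformity in $(x,v)$, particularly near the boundaries $x=0$ and $v=c_0$ and in the borderline case $\delta=0$, where one lacks a clean asymptotic expansion for $r^{-1}$. The device of splitting the $x$-range at $c_0/2$ together with monotonicity of $g^{-1}$ is what reduces every piece to a regime where the Uniform Convergence Theorem (applied either to $r^{-1}$ directly or to $r$ contrapositively) can be invoked to conclude.
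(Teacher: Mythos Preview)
Your overall route coincides with the paper's: pass from $\bar\nu$ to $\varphi$ via $\bar\nu(y)\sim\Phi(1/y)\sim\varphi(1/y)$, use the exact identity $t\varphi(1/g^{-1}(v))=1/v$, and reduce everything to showing $g^{-1}(x)/g^{-1}(v)\to 0$ uniformly so that the slow variation of $\varphi$ (equivalently of $\bar\nu$) finishes the job. That reduction is exactly what the paper does.

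The gap is your restriction ``$v$ in any fixed compact $[c_0,V]$''. The lemma asserts uniformity over the unbounded region $\{x\ge 0,\ v\ge x+c_0\}$, and your argument for $g^{-1}(x)/g^{-1}(v)\to 0$ does not survive unbounded $x$. In the case $\delta>0$ you invoke the Uniform Convergence Theorem for $r^{-1}$ to get $r^{-1}((x+c_0)t)-r^{-1}(xt)\sim r^{-1}(xt)\big((1+c_0/x)^{1/\delta}-1\big)$, but for large $x$ the factor $(1+c_0/x)^{1/\delta}-1$ is arbitrarily small and the UCT only controls ratios with $\lambda=1+c_0/x$ in a fixed compact of $(0,\infty)$, so the asymptotic is not uniform. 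In the case $\delta=0$ your contrapositive relies on $(x+c_0)/x$ being bounded away from $1$, which again fails for $x$ large. Thus as written you prove strictly less than the statement.

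The paper closes this uniformly in a single stroke by using property~(4) of Lemma~\ref{eqn:equifun}, namely that $f(z):=z\varphi(e^{-z})$ is non-decreasing. Writing $r^{-1}(w)=w\,f(r^{-1}(w))$ (which is just the identity $z=r(z)\cdot f(z)$ evaluated at $z=r^{-1}(w)$), one gets for all $x\ge 0$
\[
r^{-1}((x+c_0)t)-r^{-1}(xt)=(x+c_0)t\,f\!\big(r^{-1}((x+c_0)t)\big)-xt\,f\!\big(r^{-1}(xt)\big)\ \ge\ c_0\,t\,f\!\big(r^{-1}((x+c_0)t)\big),
\]
since $f\circ r^{-1}$ is non-decreasing. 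Because $f(z)\to c\in(0,\infty]$ as $z\to\infty$ under $\mathbb{H}$, the right-hand side is bounded below by $c_0 t\cdot(c/2)$ for $t$ large, uniformly in $x\ge 0$. This is precisely the missing ingredient; with it your splitting at $c_0/2$ becomes unnecessary and the full uniformity in the statement follows.
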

%\begin{remark}
%Recall that under the assumption $\mathbb{H}$, the map $x\mapsto x\Phi(e^{-x})$ admits for non-decreasing equivalent, the function  $x\mapsto x\varphi(e^{-x})$. This monotonicity property is crucial for establishing the lemma. It generally holds when $\delta \in [0,1)$, so we will rely on the assumption to ensure the monotonicity only when $\delta=1$.
%\end{remark}
\begin{proof}
The regular variation of $1/\Phi(e^{-x})$ implies that $\Phi$ is slowly varying near zero. Then necessarily $\Phi'(0)=\int_0^{\infty}\bar{\nu}(u)\ddr u=\infty$. Hence, by Fubini-Tonelli theorem
\begin{equation}\label{fubinitonelli} \frac{\Phi(x)}{x}=\beta+\int_0^{\infty}e^{-ux}\bar{\nu}(u)\ddr u \underset{x\rightarrow 0}{\sim} \int_0^{\infty}e^{-ux}\bar{\nu}(u)\ddr u.\end{equation}
By applying Karamata Tauberian theorem, see \cite[Theorem 1.7.1]{regularvariation}, one has $\int^z\bar{\nu}(u)\ddr u \underset{z\rightarrow \infty}{\sim} z\Phi(1/z)$. Since  $u\mapsto \bar{\nu}(u)$ is monotone, we deduce by applying the monotone density theorem, see \cite[Theorem 1.7.2]{regularvariation}, that
\begin{equation}\label{equivPhinu}\bar{\nu}(u)\underset{u\rightarrow \infty}{\sim} \Phi(1/u).\end{equation} 
%Recall $\varphi$ in Lemma \ref{eqn:equifun}. 
Recall $g^{-1}(x)=\frac{1}{\varphi^{-1}(1/xt)}$. Since $\Phi(1/y)\sim\varphi(1/y)$ as $y$ goes to $\infty$, we have 
\begin{equation}\label{eqn:g}t\bar{\nu}(g^{-1}(x))=t\bar{\nu}(1/\varphi^{-1}(1/xt))
\underset{t\rightarrow \infty}{\longrightarrow}1/x.\end{equation}
Since $\bar \nu$ is decreasing and $g^{-1}$ is increasing in $x$, the map $x\mapsto \bar{\nu}(g^{-1}(x))$ is decreasing and the convergence in \eqref{eqn:g} holds uniformly in $x\geq \epsilon$ for some fixed $\epsilon>0$, see e.g. \cite[Item 127 pages 81 and 270]{zbMATH01257769}. % We shall get rid of the assumption $x\geq \epsilon$ at the end of the proof.  

Next, we are going to show the following uniform convergence: for any $c_0>0$ \begin{equation}\label{eqn:A}\underset{t\rightarrow \infty}{\lim}\, \underset{v\geq x+c_0, x\geq 0}{\inf}\frac{g^{-1}(v)%\varphi^{-1}\big(\frac{1}{xt}\big)
}{g^{-1}(x)}%\varphi^{-1}\big(\frac{1}{vt}\big)
%=\underset{t\rightarrow \infty}{\lim}\, \underset{x\geq \epsilon>0}{\inf}\frac{g^{-1}(x+c_0)%\varphi^{-1}\big(\frac{1}{xt}\big)
%}{g^{-1}(x)%\varphi^{-1}\big(\frac{1}{(x+c_0)t}\big)
%}
=\underset{t\rightarrow \infty}{\lim}\, \underset{x\geq 0}{\inf}\frac{1/\varphi^{-1}(1/(x+c_0)t)%\varphi^{-1}\big(\frac{1}{xt}\big)
}{1/\varphi^{-1}(1/xt)%\varphi^{-1}\big(\frac{1}{(x+c_0)t}\big)
}=\infty.\end{equation}
%where $A\in(1,\infty)$ 
%in the log case and $A=\infty$ in the super log case. N
Note that the first equality uses the definition of $g^{-1}$, see \eqref{defg}, and the monotonicity of $g^{-1}$. So we only have to prove the second equality. We first prove a partial result as follows: for any $\epsilon>0,$
\begin{equation}\label{eqn:xepsilon0}\underset{t\rightarrow \infty}{\lim}\, \underset{v\geq x+c_0, x\geq \epsilon>0}{\inf}\frac{1/\varphi^{-1}(1/(x+c_0)t)
}{1/\varphi^{-1}(1/xt)
}=\infty.\end{equation}
Recall that $\varphi$ is strictly increasing, we have for all $x>0$
\[\varphi^{-1}(1/x)=\exp\Big(-xf(-\ln\varphi^{-1}(1/x))\Big),\]
where $f:x\mapsto x\varphi(e^{-x})$. Then, uniformly in $x\geq \epsilon$,
\begin{align*}\frac{1/\varphi^{-1}(1/(x+c_0)t)
}{1/\varphi^{-1}(1/xt)
}&=\exp\Big((x+c_0)tf(-\ln\varphi^{-1}(1/(x+c_0)t))-xtf(-\ln\varphi^{-1}(1/xt))\Big)\\
&\geq \exp\Big(c_0tf(-\ln\varphi^{-1}(1/(x+c_0)t))\Big) \xrightarrow[t\to\infty]{} \infty.\end{align*} Here the inequality uses that  $x\mapsto -\ln\varphi^{-1}(1/x)$ is strictly increasing to infinity and $x\mapsto f(x)=x\varphi(e^{-x})$ is non-decreasing, see Lemma \ref{eqn:equifun}. Then \eqref{eqn:xepsilon0} is proved.

In fact, the assumption $x\geq \epsilon>0$ in \eqref{eqn:xepsilon0} can be replaced by $x\geq 0$. Setting $z=x+c_0/2$ and using the fact that $\varphi^{-1}$ is increasing, we have :
$$\frac{g^{-1}((x+c_0)t)}{g^{-1}(xt)}= \frac{\varphi^{-1}(\frac{1}{xt})}{\varphi^{-1}(\frac{1}{(x+c_0)t})}\geq \frac{\varphi^{-1}(\frac{1}{(x+c_0/2)t})}{\varphi^{-1}(\frac{1}{(x+c_0)t})}=\frac{\varphi^{-1}(\frac{1}{zt})}{\varphi^{-1}(\frac{1}{(z+c_0/2)t})}=\frac{g^{-1}((z+c_0/2)t)}{g^{-1}(zt)}, \quad z\geq c_0/2.$$
Using \eqref{eqn:xepsilon0}, the last term goes to $\infty$ uniformly in $z\geq \epsilon:=c_0/2$. This allows us to conclude that the first term in the above display also goes to $\infty$ uniformly for $x\geq 0$. Therefore,  \eqref{eqn:A} is proved. 

Finally, we are ready to prove \eqref{tailconvergence}. Since $\bar\nu$ is slowly varying, the uniform convergence theorem for slowly varying functions, see \cite[Theorem 1.2.1]{regularvariation}, together with \eqref{eqn:g} and \eqref{eqn:A}, entail
\begin{equation}\label{eqn:g/g}
t\overline \nu\big(g^{-1}(v)-{g^{-1}(x)}\big)=t\overline \nu\left(g^{-1}(v)\left(1-\frac{g^{-1}(x)}{g^{-1}(v)}\right)\right)\xrightarrow[t\to\infty]{}1/v.
\end{equation}
The convergence holds uniformly in $v\geq x+c_0$ and $x\geq 0$. Then the proof is finished. \qed

\end{proof}

\bigskip
% by monotonicity as previously. To prove \eqref{eqn:A}, we distinguish whether $\delta=0, 0<\delta<1$ or $\delta=1$ and use the expression of $\varphi$ via the smoothly slowly varying functions defined in \eqref{eqn:smooth=0}, \eqref{eqn:smooth>0} and \eqref{eqn:smoothdelta=1}.          
%Since $\Phi$ is slowly varying, its inverse is thus rapidly varying at $0$, that is to say for any $\lambda>1$, $\Phi^{-1}(\lambda x)/\Phi^{-1}(x)\longrightarrow \infty$ as $x$ goes to $0$. hence% for any $v>x$, 
%\[\frac{\Phi^{-1}(1/xt)}{\Phi^{-1}(1/vt)} \underset{t\to\infty}{\longrightarrow} \infty, \forall x\geq 0 \] 

%that \eqref{eqn:g/g} is always true.  As explained before, the latter implies \eqref{tailconvergence} and the proof is finished. \qed 

%$$\mathcal A^{(t)}f(x)\to\mathcal Af(x), \quad \text{ uniformly in }x\in [0,\infty).$$
Recall $g(y)=\frac{1}{t\varphi(1/y)}$, see \eqref{defg}, and for any fixed $x$, set $y=g^{-1}(x)$. Let $t>0$ be fixed. The generator of the prelimiting process $(g(Y_{st}),s\geq 0)$ takes the following form  
\begin{align*}\mathcal A^{(t)}f(x)&:=t\mathcal L(f\circ g) (y)\\
&=ty\mathrm{L}^{\Psi}(f\circ g)(y)+t\mathrm{L}^{\Phi}(f\circ g)(y)\\
&=I_1+I_2+I_3+I_4,
\end{align*}
with respectively $I_1$ the drift term, $I_2$ the immigration jump term, $I_3$ the diffusion term and $I_4$ the branching jump term:
\begin{align}
  I_1&:=t(\beta-by)(f\circ g)'(y)    \label{I1}\\
  I_2&:=t\int_0^\infty \Big(f\circ g(y+u)-f\circ g (y)\Big)\nu(\ddr u)\label{I2}\\
  I_3&:=t\frac{\sigma^2}{2}y(f\circ g)''(y) \label{I3}\\
  I_4&:=ty\int_0^\infty \Big(f\circ g(y+u)-f\circ g (y)-u(f\circ g)'(y)\Big)\pi(\ddr u).\label{I4}
\end{align}
%Using the formula that 
%\begin{equation}\label{eqn:jumpCSBP}f(z+h)-f(z)-hf'(z)=h^2\int_0^1f''(z+hv)(1-v)\ddr v,\end{equation}
%we obtain 
%\begin{align}\label{eqn:jumpCSBPeasy}&\int_0^\infty \Big(f\circ g(y+u)-f\circ g (y)-u(f\circ g)'(y)\Big)\pi(\ddr u)\nonumber \\
%=&\int_0^\infty u^2\pi(\ddr u)\int_0^1\left((f\circ %g)''(y+uv)\right)(1-v)\ddr v.\end{align}
%In this case, $g(y)=\frac{1}{t\varphi(1/y)}, g^{-1}(x)=\frac{1}{\varphi^{-1}(\frac{1}{xt})}$ and the target generator is $\mathcal Af(x)=\int_x^\infty (f(y)-f(x))x^{-2}\ddr y$. 
%From now on, $f$ denotes a function in $C_0^2[0,\infty)$ constant close to zero.
Recall $\mathcal{A}$ the generator of the ESN$(b/c,\mu)$ process $M$, see \eqref{eqn:logconv} with $\mu(\ddr y):=\ddr y/y^2$. Note that $\bar{\mu}(y)=1/y$ so that $\int_0 \bar{\mu}(y)\ddr y=\infty$ and by Lemma \ref{lem:coreESN}, $\mathcal{D}_{\mathrm{ESN}}$ defined in \eqref{eqn:dd} is a core. For any $f\in \mathcal{D}_{\mathrm{ESN}}$
and $x\geq  0$, an application of Fubini-Lebesgue's theorem yields
\begin{align*}\mathcal{A}f(x)&:=\int_x^\infty \Big(f(y)-f(x)\Big)\frac{\ddr y}{y^2}-\frac{b}{c}f'(x)\\
&=\int_x^\infty f'(z)\frac{\ddr z}{z}-\frac{b}{c}f'(x). 
\end{align*}
We shall next establish the following lemma.  
\begin{lemma}\label{lem:ata}For any $f\in C_c^2[0,\infty)$ \textit{constant near zero}, 
\begin{equation}\label{convergenceofgenerators}\mathcal{A}^{(t)}f\rightarrow \mathcal{A}f \text{ uniformly as } t \text{ goes to }\infty. \end{equation}
\end{lemma}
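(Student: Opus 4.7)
The plan is to decompose $\mathcal{A}^{(t)}f = I_1 + I_2 + I_3 + I_4$ following \eqref{I1}--\eqref{I4} and to show that each term converges uniformly to the corresponding part of $\mathcal{A}f(x) = \int_x^{\infty} f'(v)/v\,\ddr v - (b/c)f'(x)$. Since $f$ has compact support in $[0,R]$ and is constant on $[0,\epsilon_0]$, both sides of \eqref{convergenceofgenerators} vanish outside $[0,R]$, so it suffices to prove uniform convergence on $[0,R]$. Everything rests on the asymptotics for $g(y) = 1/(t\varphi(1/y))$ with $y=g^{-1}(x)\to\infty$ uniformly. Direct differentiation combined with Lemma \ref{lem:twoderivezero} yields
\begin{equation*}
tyg'(y) = \frac{\varphi'(1/y)}{y\varphi^{2}(1/y)} \underset{t\to\infty}{\longrightarrow} \frac{1}{c}, \quad ty(g'(y))^{2} \underset{t\to\infty}{\longrightarrow} 0, \quad tyg''(y) \underset{t\to\infty}{\longrightarrow} 0,
\end{equation*}
all uniformly in $x$ on compact subsets of $(0,\infty)$. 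The last limit follows from $tyg''(y) = (r''(\ln y) - r'(\ln y))/y$ with $r = 1/\varphi(e^{-\cdot})$ and the smooth regular variation \eqref{eqn:h}, using $r(\ln y) = tx$.

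The drift and diffusion terms are immediate: $I_1 = f'(x)\bigl[\beta tg'(y) - b\,tyg'(y)\bigr] \to -(b/c)f'(x)$ and $I_3 = \tfrac{\sigma^2}{2}\bigl[f''(x)\,ty(g'(y))^{2} + f'(x)\,tyg''(y)\bigr] \to 0$. For the immigration jump term, a Fubini identity applied to $f(g(y+u))-f(g(y)) = \int_0^u (f\circ g)'(y+s)\,\ddr s$ followed by the substitution $v = g(y+s)$ gives
\begin{equation*}
I_2 = t\int_x^{\infty} f'(v)\,\bar{\nu}\bigl(g^{-1}(v) - g^{-1}(x)\bigr)\,\ddr v.
\end{equation*}
By Lemma \ref{lem: fastjumpdiff}, $t\bar{\nu}(g^{-1}(v)-y)\to 1/v$ uniformly on $\{v\ge x+c_0\}$, so the integral over $[x+c_0,R]$ tends to $\int_{x+c_0}^{R} f'(v)/v\,\ddr v$. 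The residual contribution over $[x,x+c_0]$ is controlled, via the reverse substitution and an estimate based on Lemma \ref{lem: fastjumpdiff} together with the L\'evy integrability of $\bar{\nu}$, by a quantity of order $\|f'\|\ln(1+c_0/x)$ that vanishes as $c_0\downarrow 0$, uniformly in $t$. Hence $I_2\to\int_x^{\infty} f'(v)/v\,\ddr v$ uniformly.

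The main obstacle is the branching jump term $I_4$, where $\pi$ need not satisfy $\int u^{2}\,\pi(\ddr u)<\infty$. Let $L(x) := g^{-1}(R) - y$; by \eqref{eqn:A} of Lemma \ref{lem: fastjumpdiff}, $L(x)\to\infty$ and $ty/L(x)\to 0$ uniformly. Split the $u$-range at $1$ and $L(x)$. For $u\in[0,1]$, the second-order Taylor bound $|\cdot|\le (u^2/2)\|(f\circ g)''\|_\infty$, combined with $ty\bigl[(g'(y))^{2} + |g''(y)|\bigr]\to 0$ and $\int_0^1 u^2\,\pi(\ddr u)<\infty$, yields uniform decay. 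For $u\ge L(x)$, the compact support of $f$ forces $f(g(y+u)) = 0$, so the integrand is $-f(x) - u f'(x) g'(y)$ and the contribution is bounded by $|f(x)|\,ty\bar{\pi}(L(x)) + |f'(x)|\,tyg'(y)\int_{L(x)}^{\infty} u\,\pi(\ddr u)$, vanishing since $L(x)\bar{\pi}(L(x))\to 0$ (from $\int_1^{\infty} u\,\pi(\ddr u)<\infty$), $ty/L(x)\to 0$, $tyg'(y)\to 1/c$, and $\int_{L(x)}^{\infty} u\,\pi(\ddr u)\to 0$. The delicate intermediate range $u\in(1,L(x))$ is treated by a Fubini identity, rewriting the expression as $ty\int_0^{L(x)}(f\circ g)''(y+r)\hat\pi(r)\,\ddr r$ with $\hat\pi(r) := \int_r^{\infty}\bar{\pi}(s)\,\ddr s$, followed by the change of variables $v = g(y+r)$ as in the analysis of $I_2$; uniform vanishing for $v\ge x+c_0$ follows from $ty\,g'(g^{-1}(v))\to 0$ uniformly (by \eqref{eqn:A}) together with the uniform bound $\hat\pi(r)\le\hat\pi(1)<\infty$ for $r\ge 1$, while the remainder near $v=x$ is controlled by the same residual estimate used for $I_2$. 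Summing the four limits yields $\|\mathcal{A}^{(t)}f - \mathcal{A}f\|\to 0$.
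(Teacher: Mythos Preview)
Your overall strategy and the treatment of $I_1$ and $I_3$ match the paper's. The Fubini rewrite $I_2=t\int_x^\infty f'(v)\,\bar\nu(g^{-1}(v)-g^{-1}(x))\,\ddr v$ and the use of Lemma~\ref{lem: fastjumpdiff} on $[x+c_0,\infty)$ are also as in the paper. However there are genuine gaps in the residual for $I_2$, in the treatment of $I_4$, and in the uniformity near $x=0$.

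For $I_2$, the claimed bound of order $\|f'\|\ln(1+c_0/x)$ for the piece over $[x,x+c_0]$ is unjustified: Lemma~\ref{lem: fastjumpdiff} gives no control of $t\bar\nu(g^{-1}(v)-y)$ when $v$ is close to $x$, and this quantity can blow up since $\bar\nu(0+)$ may be infinite. The paper's argument here is not a one-liner: it reverts to the $u$-variable, splits at $u=g^{-1}(x)$, and on the inner piece applies the mean-value theorem to $f\circ g$, using both $\int_0^C u\,\nu(\ddr u)<\infty$ and the regular variation of $tg'$ (so that $\sup_{y\le\theta\le 2y}tg'(\theta)\sim tg'(y)\to 0$). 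For $I_4$, your split at $L(x)=g^{-1}(R)-y$ creates problems. The Fubini identity you state for the middle range $u\in(1,L(x))$ is wrong: transforming that range alone yields the kernel $\int_{s\vee 1}^{L(x)}(u-s)\pi(\ddr u)$, not $\hat\pi(s)=\int_s^\infty\bar\pi$. And $ty/L(x)\to 0$ does not follow from \eqref{eqn:A}, which only gives $y/L(x)\to 0$; one needs the exponential lower bound hidden in its proof. The paper's route is simpler: split at a \emph{fixed} $C$, use dominated convergence with $ty(f\circ g)''(y+uv)\to 0$ on $[0,C]$, and on $[C,\infty)$ separate the compensation $-u(f\circ g)'(y)$ from the increment; each piece is then bounded by $a_C:=\int_C^\infty u\,\pi(\ddr u)$ times a quantity bounded uniformly in $t$ (using only $\sup_{v\ge 0}tyg'(y+v)\le\sup_{y}tyg'(y)<\infty$), and $a_C\to 0$ as $C\to\infty$. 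Finally, your displayed asymptotics are stated only on compact subsets of $(0,\infty)$; since $g^{-1}(0)=0$, the limits $tyg'(y)\to 1/c$ etc.\ fail at $x=0$. The paper handles $x\in[0,\epsilon]$ in a separate step, where the constant-near-zero hypothesis kills all derivative terms and the jump integrals reduce to $v\ge\epsilon_0$, at which point Lemma~\ref{lem: fastjumpdiff} applies with a fixed $c_0$. You should make this reduction explicit rather than leave it implicit.
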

 %We first work with a function . 
 Notice that any $f\in C_c^2[0,\infty)$, both $f$ and $f\circ g$ belong to the domain of the CBI $(Y_s,s\geq 0)$ and $f$ to that of the ESN $(M_s,s\geq 0)$. As we can see by the form of the generator $\mathcal A$, $I_3$ and $I_4$ will have no contribution at the limit. The proof of the above lemma is given by the two subsequent sections. 

\subsubsection{Proof of Lemma \ref{lem:ata}: uniform convergence on $[\epsilon,\infty)$}\label{sec:convawayfrom0}
We first show the uniform convergence of generators \eqref{convergenceofgenerators} for $x\geq \epsilon$. The assumption of $f$ being constant near $0$ will not play a role in this part.  We will prove it in four steps, each dealing with one term above. \\

\noindent \underline{Step 1}:  drift term $I_1$. 
Note that 
\begin{equation}\label{eqn:drift}(\beta-by)(f\circ g)'(y)=(\beta-bg^{-1}(x))(f'\circ g)(g^{-1}(x))\times g'(g^{-1}(x))\end{equation}

Plugging in $y=g^{-1}(x)$ to \eqref{eqn:drift} and using that
$g'(y)=\frac{1}{y^2}\frac{\varphi'(1/y)}{\varphi^2(1/y)}t^{-1}$, we have 
\begin{align}\label{eqn:fastdrift}(\beta-by)(f\circ g)'(y)&=(\beta-bg^{-1}(x))(f'\circ g)(g^{-1}(x))\times g'(g^{-1}(x))\nonumber\\
&=(\beta-by)\frac{1}{y^2}\frac{\varphi'(1/y)}{\varphi^2(1/y)}t^{-1}f'(x).
\end{align}
Using \eqref{eqn:fastdrift} and \eqref{eqn:firstconv},  \eqref{eqn:secondconv}
from \Cref{lem:twoderivezero}, we get the pointwise convergence for $x\geq \epsilon$ fixed \begin{equation}\label{pointwisedrift} I_1=t(\beta-by)(f\circ g)'(y)\xrightarrow[t\to\infty]{} \frac{b}{c}f'(x).
\end{equation}
For the uniform convergence when $x\geq \epsilon$, we shall use the following fact: since $y=g^{-1}(x)$ is increasing in $x$, 
\begin{equation}\label{uniform bymonotonicity}
\text{when } x\geq \epsilon,  y=g^{-1}(x)\geq g^{-1}(\epsilon)\underset{t\rightarrow \infty}{\longrightarrow} \infty.
\end{equation}
Therefore
\begin{align}\label{uniform bymonotonicity2}
\sup_{x\geq \epsilon}\left\lvert t(\beta-by)(f\circ g)'(y)-\frac{b}{c}f'(x)\right\lvert& 
=\sup_{x\geq \epsilon}\left\lvert
(\beta-by)\frac{1}{y^2}\frac{\varphi'(1/y)}{\varphi^2(1/y)}-\frac{b}{c}\right\lvert\times |f'(x)| \nonumber \\
&\leq  \sup_{u\geq g^{-1}(\epsilon)}\left\lvert
(\beta-bu)\frac{1}{u^2}\frac{\varphi'(1/u)}{\varphi^2(1/u)}-\frac{b}{c}\right\lvert\times ||f'||, 
\end{align}
the last upper bound goes to $0$ as $t$ goes to $\infty$ by Lemma \ref{lem:twoderivezero}. The convergence is thus uniform for all $x\geq \epsilon$. \qed 
%This is the main argument for the uniform convergence here and also in the next to come. \qed
\\
\noindent  \underline{Step 2}: immigration jump term $I_2$. By applying Fubini-Lebesgue's theorem, we can rewrite it as follows:

\begin{align*}
I_2=t\int_0^\infty \Big(f\circ g(y+u)-f\circ g (y)\Big)\nu(\ddr u)&\,=t\int_y^\infty f'(g(u))g'(u)\bar\nu(u-y)\ddr u\nonumber\\
%  &\,=t\int_y^\infty f'(g(u))\bar\nu(u-y)\ddr g(u)\nonumber\\
  &\,=t\int_{x}^\infty f'(z)\bar\nu(g^{-1}(z)-g^{-1}(x))\ddr z.\label{eqn:barnuintegral}
\end{align*} 
%with $h(u)=g(g^{-1}(x)+u)$.  
Next, we are going to show that the last integral term converges uniformly in $x\geq \epsilon$ towards  
$\int_x^\infty f'(v)v^{-1}\ddr v$. We first show the convergence on an interval of the form $[x+c_0,\infty)$ with $c_0>0, x\geq \epsilon>0$. Since $f'$ is integrable near $\infty$, 
%and $t\bar\nu(g^{-1}(z)-g^{-1}(x))$ is bounded uniformly in $z$, by Lebesgue theorem, for any $x\geq 0$
%\[\int_{x+c_0}^\infty f'(z)(t\bar\nu(g^{-1}(z)-g^{-1}(x))\ddr z \rightarrow \int_{x+c_0}^\infty f'(z)\frac{\ddr z}{z}.\] 
by using Lemma \ref{lem: fastjumpdiff} and Lebesgue theorem, we get
\begin{align*} 
    \sup_{x\geq \epsilon}\Big\lvert\int_{x+c_0}^\infty f'(z)&\left(t\bar\nu(g^{-1}(z)-g^{-1}(x))\ddr z- 1/z\right)\ddr z \Big\lvert\\
    &\leq  \int_{0}^\infty |f'(z)|\sup_{x\geq \epsilon}\left \lvert t\bar\nu(g^{-1}(z)-g^{-1}(x))- 1/z  \right \lvert \mathbbm{1}_{\{z\geq x+c_0\}}\ddr z \underset{t\to\infty}{\to} 0.
\end{align*}
%and the above convergence holds uniformly in $x\geq \epsilon$ as $t\to\infty$.  
Next we deal with the integral from $x$ to $x+c_0$ and show that this integral is arbitrarily small uniformly with respect to $x\in [\epsilon,\infty)$, by choosing $c_0$ small. We note that 
\begin{align*}
&t\int_x^{x+c_0} f'(z)\bar\nu(g^{-1}(z)-g^{-1}(x))\ddr z\\
&=t\int_0^{g^{-1}(x+c_0)-g^{-1}(x)}(f\circ g(g^{-1}(x)+u)-f(x))\nu(\ddr u)\\
%&\leq t\int_0^{g^{-1}(y)}(f\circ g(g^{-1}(x)+u)-f(x))\nu(\ddr u)\\
&= t\int_0^{g^{-1}(x)}(f\circ g(g^{-1}(x)+u)-f(x))\nu(\ddr u)+t\int_{g^{-1}(x)}^{g^{-1}(x+c_0)-g^{-1}(x)}(f\circ g(g^{-1}(x)+u)-f(x))\nu(\ddr u)\\
&=:J_{1}+J_2.\end{align*}
By Lemma \ref{lem: fastjumpdiff}, note that $J_2\leq 2\|f\|t(\bar\nu(g^{-1}(x))-\bar\nu(g^{-1}(x+c_0)))\underset{t\rightarrow \infty}{\rightarrow} 2\|f\|\left(\frac{1}{x}-\frac{1}{x+c_0}\right)$ which is, for small $c_0$,  arbitrarily small uniformly in $x\geq \epsilon$. For $J_1,$ by applying the mean value theorem to the function $f\circ g$ in the integrand, one has
\[f\circ g(g^{-1}(x)+u)-f\circ g(g^{-1}(x))=f'(g(\theta))g'(\theta)u,\]
for some $\theta\in [g^{-1}(x),g^{-1}(x)+u]\subset [g^{-1}(x),2g^{-1}(x)]$ and
\begin{align*}J_{1}&=t\int_0^{g^{-1}(x)}f'(g(\theta))g'(\theta)u\nu(\ddr u)\\
&=t\int_0^{C}f'(g(\theta))g'(\theta)u\nu(\ddr u)+t\int_C^{g^{-1}(x)}f'(g(\theta))g'(\theta)u\nu(\ddr u)\\
&\leq \|f'\|t \sup_{g^{-1}(x)\leq \theta \leq 2g^{-1}(x)}\!\! g'(\theta)\int_0^C u\nu(\ddr u)+\|f'\|t\sup_{g^{-1}(x)\leq \theta\leq 2g^{-1}(x)}g'(\theta)g^{-1}(x)\bar\nu(C)
\end{align*}
for any $C>0$. We now explain why $J_1$ is arbitrarily small uniformly in $x\geq \epsilon$. Recall $tg(x)=\frac{1}{\varphi(1/x)}$. For the first term on the right-hand side in the last line of the above display, with fixed $C$, one has by Lemma \ref{lem:twoderivezero}, see \eqref{eqn:secondconv}.
\begin{equation}\label{convergencetozeroforu}
tg'(u)=-\frac{\varphi'(1/u)}{\varphi(1/u)^2 u^2}\underset{u\rightarrow \infty}{\longrightarrow} 0,\end{equation} 
Since $tg'$ does not depend on $t$ and  $y=g^{-1}(x)$ goes to $\infty$ as $t$ goes to $\infty$, $$tg'(g^{-1}(x))\underset{t\rightarrow \infty}{\longrightarrow} 0.$$
Moreover, using the representation $tg(u)=\frac{1}{\varphi(1/u)}=e^{h(\ln \ln u)}$ with the fact that $\underset{u\rightarrow \infty}{\lim} h'(u)=\delta$ and that $g$ is slowly varying, we see that $tg'(u)=h'(\ln \ln u) g(u) (\ln\ln)'(u)$ is regularly varying at $\infty$. 
%\comment{It is not slowly but regularly varying. Use Theorem 1.5.3 of Bingham. }
Therefore, by the uniform convergence theorem, see \cite[Theorem 1.2.1]{regularvariation}, we have
\begin{equation}\label{supequiv}
\sup_{u\leq \theta\leq 2u}{tg'(\theta)}\underset{u\rightarrow \infty}{\sim} tg'(u).
\end{equation}
%hence by choosing $u=g^{-1}(x)$,
%$$\sup_{g^{-1}(x)\leq \theta_2\leq 2g^{-1}(x)}{tg'(\theta_{2})}\underset{t\rightarrow \infty}{\sim} tg'(g^{-1}(x)).$$
Recall \eqref{uniform bymonotonicity}, in the same fashion as in \eqref{uniform bymonotonicity2}, one has
\[\sup_{x\geq \epsilon} \sup_{g^{-1}(x)\leq \theta\leq 2g^{-1}(x)}{tg'(\theta)}\leq \sup_{u\geq g^{-1}(\epsilon)} \sup_{u\leq \theta\leq 2u}{tg'(\theta)}\]
which goes to $0$ as $t$ goes to $\infty$ by \eqref{supequiv} and \eqref{convergencetozeroforu}.

For the second term,  note that by \eqref{supequiv}
$$\sup_{g^{-1}(x)\leq \theta\leq 2g^{-1}(x)}tg'(\theta)g^{-1}(x) \underset{t\rightarrow \infty}{\sim} tg'(g^{-1}(x))g^{-1}(x),$$
and by Lemma \ref{lem:twoderivezero}, see \eqref{eqn:firstconv}, $tg'(g^{-1}(x))g^{-1}(x)$ is bounded above uniformly in $x\geq \epsilon$. Since $\bar{\nu}(C)$ goes to $0$ as $C$ goes to $\infty$, the second part is arbitrarily small uniformly for $x\geq \epsilon$. 

In conclusion, we have shown that $I_2$ converges uniformly in $x\geq \epsilon$ to $\int_{x}^\infty f'(z)\frac{\ddr z}{z}$ as $t\to\infty$. 
%Then by monotonicity, we conclude that the uniform convergence holds for $x\geq \epsilon$. 

%Now we consider $x$ small. Recall that $f$ is constant
%near zero, say over $[0,x_0]$ for some $x_0>0.$ Then for any $x\in[0,x_0/2],$ we apply Lemma \ref{lem: fastjumpdiff} to obtain that 
%\begin{align*}t\int_x^\infty f'(z)\bar\nu(g^{-1}(z)-g^{-1}(x))\ddr z&=t\int_{x_0}^\infty f'(z)\bar\nu(g^{-1}(z)-g^{-1}(x))\ddr z\\
%&\underset{t\to\infty}{\to}\int_{x_0}^\infty f'(v)v^{-1}\ddr v=\int_{x}^\infty f'(v)v^{-1}\ddr v.\end{align*}
%The above convergence holds uniformly for $x\in[0,x_0/2].$ Combining the two parts, we conclude that the step 2 is finished. 
\qed

\noindent \underline{Step 3}:  diffusion term $I_3$. For any $u$,
\begin{align}\label{eqn:doubleprime}(f\circ g)''(u)&=(f''\circ g)(u)\times (g'(u))^2+(f'\circ g)(u)\times g''(u).\end{align}
Plugging in \eqref{eqn:doubleprime} $u=y$ and $g(y)=\frac{1}{t\varphi(1/y)}$ leads to $$(f\circ g)''(y)=f''\left(\frac{1}{t\varphi(1/y)}\right)\frac{\left(\varphi'(1/y)\right)^2}{\varphi^4(1/y)}y^{-4}t^{-2}-f'\left(\frac{1}{t\varphi(1/y)}\right)\cdot \left(\frac{2}{y^3}\frac{\varphi'(1/y)}{\varphi^2(1/y)}+\frac{1}{y^4}\frac{\varphi''(1/y)}{\varphi^2(1/y)}+\frac{2}{y^4}\frac{(\varphi'(1/y))^2}{\varphi^3(1/y)}\right)t^{-1}.$$
Plugging in $y=g^{-1}(x)$, we obtain \begin{equation}\label{eqn:fastdiffusion}(f\circ g)''(y)=f''\left(x\right)\frac{\left(\varphi'(1/y)\right)^2}{\varphi^4(1/y)}y^{-4}t^{-2}-f'\left(x\right)\cdot \left(\frac{2}{y^3}\frac{\varphi'(1/y)}{\varphi^2(1/y)}+\frac{1}{y^4}\frac{\varphi''(1/y)}{\varphi^2(1/y)}+\frac{2}{y^4}\frac{(\varphi'(1/y))^2}{\varphi^3(1/y)}\right)t^{-1}.\end{equation}
From \eqref{eqn:fastdiffusion}, by applying \eqref{eqn:firstconv} and \eqref{eqn:secondconv}, we can see that \begin{equation}\label{step3convergence}
    ty(f\circ g)''(y)\xrightarrow[t\to\infty]{}0.
\end{equation}
Indeed all terms in $ty(f\circ g)''(y)$  will vanish when $t$ goes to $\infty$, as they are of order at most $\frac{1}{y^2}$. The above convergence is uniform for all $x\geq \epsilon$ since $\|f''\|<\infty$ and $\|f'\|<\infty$. Since \\ 

\noindent \underline{Step 4}: branching jump term $I_3$.

Note that 
\begin{align}
ty\int_0^\infty &\Big(f\circ g(y+u)-f\circ g (y)-u(f\circ g)'(y)\Big)\pi(\ddr u) \label{eqn:step2aim} \\
&=ty\int_0^C \Big(f\circ g(y+u)-f\circ g (y)-u(f\circ g)'(y)\Big)\pi(\ddr u) \label{smalljumps} \\
 & +ty\int_C^\infty \Big(f\circ g(y+u)-f\circ g (y)\Big)\pi(\ddr u) - tya_C(f\circ g)'(y), \label{bigjumps}
\end{align} 
with $a_C:=\int_C^{\infty}u\pi(\ddr u)$. We show that \eqref{eqn:step2aim} converges to zero uniformly in $x\geq \epsilon$. To this purpose, it suffices to prove that 
\eqref{smalljumps} converges uniformly to zero for any given $C$, and \eqref{bigjumps} can be arbitrarily small in absolute value by choosing $C$ large enough, uniformly in $x\geq \epsilon$ for large $t$. 

For the first term \eqref{smalljumps}, notice that by \eqref{step3convergence},
$$t(y+uv)\left((f\circ g)''(y+uv)\right) \underset{t\rightarrow \infty}{\rightarrow} 0,$$
and if $0<uv\leq C$, we get
$$ty\left((f\circ g)''(y+uv)\right) \underset{t\rightarrow \infty}{\rightarrow} 0,$$
also uniformly in $x\geq \epsilon$. Recall that $\int_0^C u^2\pi(\ddr u)<\infty$, by the dominated convergence theorem we have for any $C>0$ fixed
\begin{align}
&ty\int_0^C \Big(f\circ g(y+u)-f\circ g (y)-u(f\circ g)'(y)\Big)\pi(\ddr u)\nonumber \\
=&\int_0^C u^2\pi(\ddr u)\int_0^1 ty\left((f\circ g)''(y+uv)\right)(1-v)\ddr v  
%\\
%=&ty\int_0^1 u^2\pi(\ddr u)\int_0^1\left(f''\left(\frac{\ln(1+y+uv)}{t}\right)(1+y+uv)^{-2}t^{-2}-f'\left(\frac{\ln(1+y+uv)}{t}\right)(1+y+uv)^{-2}t^{-1}\right)(1-v)\ddr v \nonumber\\ 
%=&\int_0^1 u^2\pi(\ddr u)\int_0^1\left(f''\left(\frac{\ln(e^{tx}+uv)}{t}\right)\frac{e^{tx}-1}{(e^{tx}+uv)^{2}}t^{-1}-f'\left(\frac{\ln(e^{tx}+uv)}{t}\right)\frac{e^{tx}-1}{(e^{tx}+uv)^{2}}\right)(1-v)\ddr v 
\underset{t\rightarrow \infty}{\rightarrow} 0 \text{ uniformly in } x\geq \epsilon. \label{expressionsmalljumpsPsi0}
\end{align}
%For the second term \eqref{bigjumps},

Now we deal with \eqref{bigjumps}.  Recall that we have shown in Step 1 that $ty(f\circ g)(y)\underset{t\rightarrow \infty}{\rightarrow} f'(x)$ uniformly in $x\geq \epsilon$. Hence the second term in \eqref{bigjumps} converges towards $-f'(x)\int_C^\infty u\pi(\ddr u)$ which can be arbitrarily small in absolute value  if we take $C$ large enough, uniformly in $x\geq \epsilon$.
That implies that for any $\eta>0$, there exists $t_\eta, C_\eta$ such that
\begin{equation}\label{eqn:step2part1}\sup_{t\geq t_\eta, C\geq C_\eta}\left|- tya_C(f\circ g)'(y)\right|\leq \eta.\end{equation}
As for the integral term in %e prove something similar for the first term in 
\eqref{bigjumps},  note that $$(f\circ g)'(v+y)=f'(g(v+y))g'(v+y),$$ and one can re-express the integral by %\comment{we show that the latter is controlled uniformly for large $C$  using that $\sup_{y\geq 0,v\geq 0}tyg'(y+v)<\infty$}
\begin{align*}\label{controlC}
    &\left|ty\int_C^\infty \Big(f\circ g(y+u)-f\circ g (y)\Big)\pi(\ddr u)\nonumber\right| \\
    &=\left|ty\int_C^\infty \pi(\ddr u) \int_0^u f'(g(v+y))g'(v+y)\ddr v \right|\\
    &\leq \|f'\|C_1a_C,
    %\\
    %&=\int_1^\infty \pi(\ddr u)\int_0^uf'\left(\frac{\ln(e^{tx}+v)}{t}\right)\frac{e^{tx}-1}{e^{tx}+v}\ddr v \underset{t\rightarrow \infty}{\longrightarrow} af'(x),
\end{align*}
where $C_1=\sup_{ y\geq 0, v\geq 0}|tyg'(v+y)|\leq \sup_{y\geq 0}|tyg'(y)|<\infty$ by 
\eqref{eqn:firstconv}. Therefore by choosing $C$ large enough, the first term in \eqref{bigjumps} is arbitrarily small in absolute value uniformly for all $x\geq 0$. Together with \eqref{eqn:step2part1} and \eqref{expressionsmalljumpsPsi0}, we conclude that \eqref{eqn:step2aim} converges to zero uniformly in $x\geq \epsilon.$ Then the proof for Step 4 is finished. \qed

\subsubsection{Proof of Lemma \ref{lem:ata}: uniform convergence on $[0,\epsilon]$}\label{secconv0}   
We have shown in the previous section that 
\begin{equation}\label{supepsilontoinfinity}
\sup_{x\in [\epsilon,\infty)}|\mathcal{A}^{(t)}f(x)-\mathcal{A}f(x)|\underset{t\rightarrow \infty}{\longrightarrow} 0.\end{equation}
We study now the supremum over $x\in [0,\epsilon]$ and establish that for any function $f$ in $C^{2}_c([0,\infty))$ which is \textit{constant near} $0$, one has:
\begin{equation}\label{supepsilonto0}
\sup_{x\in [0,\epsilon]}|\mathcal{A}^{(t)}f(x)-\mathcal{A}f(x)|\underset{t\rightarrow \infty}{\longrightarrow} 0
\end{equation}
It is equivalent to showing that for any sequence $x_t\in [0,\epsilon]$ such that $x_t\underset{t\rightarrow\infty}{\longrightarrow} 0$, we have 
\begin{equation}
    \label{subsequence}
|\mathcal{A}^{(t)}f(x_t)-\mathcal{A}f(x_t)|\underset{t\rightarrow \infty}{\longrightarrow} 0.
\end{equation}
Indeed \eqref{supepsilonto0} plainly entails \eqref{subsequence}. If \eqref{subsequence} is true, then \eqref{supepsilonto0} must be true. In fact, if \eqref{supepsilonto0} does not hold, then \[\underset{t\rightarrow \infty}{\liminf}\sup_{x\in [0,\epsilon]}|\mathcal{A}^{(t)}f(x)-\mathcal{A}f(x)|>0.\]
Hence there exists a sequence $(y_t)$ in $[0,\epsilon]$ such that 
\[\underset{t\rightarrow \infty}{\lim} |\mathcal{A}^{(t)}f(y_t)-\mathcal{A}f(y_t)|>0.\]
If $\underset{t\rightarrow \infty}{\lim} y_t=0$ this contradicts \eqref{subsequence} and if $\underset{t\rightarrow \infty}{\limsup} y_t>0$, then by working along any convergent subsequence of $(y_t)$, we would get that the convergence \eqref{supepsilontoinfinity} previously obtained does not hold.

To prove \eqref{subsequence}, we first observe that
\[|\mathcal{A}^{(t)}f(x_t)-\mathcal{A}f(x_t)|\leq |\mathcal{A}^{(t)}f(x_t)-\mathcal{A}f(0)|+|\mathcal{A}f(0)-\mathcal{A}f(x_t)|.\]
Clearly the second term on the right-hand side converges towards $0$ as $t$ goes to $\infty$ since $x_t\underset{t\rightarrow \infty}{\longrightarrow} 0$. 
%\begin{enumerate}
%    \item For the second term,
%    \item 
For the first term, recall that $f$ is constant over $[0,\epsilon]$ for some $\epsilon>0$.
%, and denote by $\epsilon_t:=g^{-1}(\epsilon)$.  
One has $f'(x_t)=0$ for large enough $t$ hence the drift part in the immigration and the branching components vanish. 
%$-bt(e^{x_t}-1)e^{-tx_t}f'(x_t)$
Let $\Psi^0$ be the branching mechanism $\Psi^0(u)=\Psi(u)-bu$. One has 
    \begin{align*}
        &|\mathcal{A}^{(t)}f(x_t)-\mathcal{A}f(0)|=\\
        &\left\lvert t\mathrm{L}^{\Psi^0}(f\circ g)(g^{-1}(x_t))+ \int_{x_t}^{\infty}f'(v)t\bar{\nu}(g^{-1}(v)-g^{-1}(x_t))\ddr v-\int_{0}^{\infty}f'(v)\frac{\ddr v}{v} \right \lvert \\
        &\leq \left\lvert t\mathrm{L}^{\Psi^0}(f\circ g)(g^{-1}(x_t))\right \lvert+         \left\lvert \int_{\epsilon}^{\infty}f'(v)t\bar{\nu}(g^{-1}(v)-g^{-1}(x_t))\ddr v-\int_{\epsilon}^{\infty}f'(v)\frac{\ddr v}{v} \right \lvert\\
        &\leq \left\lvert t\mathrm{L}^{\Psi^0}(f\circ g)(g^{-1}(x_t))\right \lvert +\left\lvert \int_{\epsilon}^{\infty}f'(v)\big(t\bar{\nu}(g^{-1}(v)-g^{-1}(x_t))-1/v\big)\ddr v\right \lvert.   \end{align*}
        By Lemma \ref{lem: fastjumpdiff}, the second summand above goes towards $0$ and we are left to study $t\mathrm{L}^{\Psi^0}(f\circ g)(g^{-1}(x_t))$. 
 One has $(f\circ g)'(g^{-1}(x_t))=f'(x_t)g'(g^{-1}(x_t))=0$ for large enough $t$, thus
\begin{align}
&t\mathrm{L}^{\Psi^0}(f\circ g)(g^{-1}(x_t))\nonumber\\ &=tg^{-1}(x_t)\int_0^\infty \Big(f\circ g(g^{-1}(x_t)+u)-f\circ g (g^{-1}(x_t))-u(f\circ g)'(g^{-1}(x_t))\Big)\pi(\ddr u)\nonumber \\
&=tg^{-1}(x_t)\int_0^\infty \Big(f\circ g(g^{-1}(x_t)+u)-f\circ g (g^{-1}(x_t))\Big)\pi(\ddr u)\nonumber \\
&=tg^{-1}(x_t)\int_0^C \Big(f\circ g(g^{-1}(x_t)+u)-f\circ g (g^{-1}(x_t))\Big)\pi(\ddr u) \label{uleq1}\\
\qquad \qquad &\qquad \qquad+tg^{-1}(x_t)\int_C^\infty \Big(f\circ g(g^{-1}(x_t)+u)-f\circ g (g^{-1}(x_t))\Big)\pi(\ddr u) \label{ugeq1}. 
\end{align}
Note that $f\circ g(g^{-1}(x_t))=f(x_t)$ which is constant for $t$ large enough since $x_t$ goes to $0$.  For \eqref{uleq1}, observe first that since $g^{-1}$ is increasing $g^{-1}(x_t)\leq g^{-1}(\epsilon/2)$ for $t$ large enough. The function $g^{-1}$ being rapidly varying in the variable $t$, see \eqref{defg}, one has $g^{-1}(\epsilon/2)/g^{-1}(\epsilon)\underset{t\rightarrow \infty}{\longrightarrow} 0$. Moreover since $g^{-1}(\epsilon)$ goes to $\infty$ as $t$ tends to $\infty$ and $u$ is bounded by $C$,  $g^{-1}(x_t)+u\leq g^{-1}(\epsilon)$ for $t$ large enough. Therefore \eqref{uleq1} is identically zero for $t$ large enough. For \eqref{ugeq1} we conclude that it converges towards $0$ as in Step 4, see the end of Section \ref{sec:convawayfrom0},
%when dealing 
%with \eqref{eqn:step2part1} 
using that $\sup_{y\geq 0,v\geq 0}|tyg'(y+v)|< \sup_{y\geq 0}|tyg'(y)|<\infty$ by \eqref{eqn:firstconv}. 
%It remains to handle the second term $\mathcal{A}^{(t)}f(0)-\mathcal{A}f(0)$. Note that $\mathcal{A}^{(t)}f(0)=\mathcal{A}_1^{(t)}f(0)$, see Step 3, since the branching component in the generator vanishes at $x=0$.

\subsection{Proof of Theorem \ref{thmdivergencecase}}\label{sec:proofmaintheorem}

By combining the convergences obtained in \eqref{supepsilontoinfinity} and  \eqref{supepsilonto0} above on $[\epsilon,\infty)$ and $[0,\epsilon]$, we get
\begin{equation}\label{convAt0} \|\mathcal{A}^{(t)}f(x)-\mathcal{A}f(x)\|\underset{t\rightarrow \infty}{\longrightarrow} 0
\end{equation}
for any $f\in C^{2}_c([0,\infty))$ such that $f$ is constant near $0$. 

We now show that for any $f\in \mathcal{D}_{\mathrm{ESN}}$, there is a family of functions $(f_t)$ in $C^{2}_c([0,\infty))$, constant near $0$ and such that 
\[f_t\underset{t\rightarrow \infty}{\longrightarrow} f \text{ and } \mathcal{A}^{(t)}f_t \underset{t\rightarrow \infty}{\longrightarrow} \mathcal{A}f \text{ uniformly}.\]
We appeal to Lemma \ref{lemmaapprox}. 
The latter provides a sequence $(f_n)$ converging uniformly towards $f$ and
%\eqref{density}. One has 
%\comment{$\epsilon_n$ has been used above}
satisfying $\epsilon_n:=||\mathcal{A}f_n-\mathcal{A}f||\underset{n\rightarrow\infty}{\longrightarrow} 0$. Thus by \eqref{convAt0} for any large $n$, there exists $t(n)>0$ such that for $t\geq t(n)$, $\| \mathcal{A}^{(t)}f_n-\mathcal{A}f_n\|\leq \epsilon_n$, hence 
\begin{align*}
\|\mathcal{A}^{(t)}f_n-\mathcal{A}f\|\leq \| \mathcal{A}^{(t)}f_n-\mathcal{A}f_n\|+\epsilon_n \leq 2\epsilon_n.  
\end{align*}
Choosing $(t(n),n\geq 1)$ strictly increasing and going to $\infty$, we finally set $f_t=f_n$ for any $t\in [t(n),t(n+1))$ and claim that
\[f_t\underset{t\rightarrow \infty}{\longrightarrow} f \text{ and } \mathcal{A}^{(t)}f_t \underset{t\rightarrow \infty}{\longrightarrow} \mathcal{A}f \text{ uniformly}.\]
We can now apply \cite[Theorem 19.25-(i-iv)]{Kallenberg} and state 
\begin{equation}\label{convergenceprocessvarphi} 
\left(\frac{1}{t\varphi(1/Y_{st})},s\geq 0\right)\underset{t\rightarrow \infty}{\Longrightarrow} (M_s,s\geq 0),
\end{equation}
with $M$ an ESN$(b/c,\mu)$ and $\mu(\ddr x):=\ddr x/x^2$. It only remains to verify that we can replace $\varphi$ by $\Phi$. This follows from the following lemma which also shows that one can exchange $\Phi$ or $\varphi$ by any equivalent function at $0$ that is increasing, see the last statement of Theorem \ref{thmdivergencecase}.
\begin{lemma}
%\comment{Seems only increasing is needed, not strictly; no need to go to infinity at infinity (because the factor $t$ will help) }
[Convergence up to equivalence] \label{lem:equivconvergence} Let $F$ be increasing and equivalent to $\varphi$ at $0$. Then
\begin{equation}\label{unifFphi}
\underset{0\leq s\leq T}{\sup} \left| \frac{1}{tF(1/Y_{st})} - \frac{1}{t\varphi(1/Y_{st})}\right| \underset{t\rightarrow \infty}{\Longrightarrow} 0.
\end{equation}
\end{lemma}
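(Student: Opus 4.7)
The strategy is a Slutsky-type argument: since \eqref{convergenceprocessvarphi} has already been established, the conclusion \eqref{unifFphi} will transfer the weak convergence from $\varphi$ to $F$. My plan is to split the range of $Y_{st}$ at a threshold $1/\delta$, exploiting the equivalence $F\sim\varphi$ at $0$ when $1/Y_{st}$ is small (i.e.\ $Y_{st}$ is large) and using the strict positivity of $F(\delta)$ and $\varphi(\delta)$ when $Y_{st}$ is bounded.

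Fix $\epsilon>0$. From $F(x)/\varphi(x)\to 1$ as $x\to 0^+$, pick $\delta=\delta(\epsilon)>0$ such that for every $x\in(0,\delta)$,
\[\left|\frac{1}{F(x)}-\frac{1}{\varphi(x)}\right|=\frac{|\varphi(x)-F(x)|}{F(x)\varphi(x)}\leq \frac{\epsilon}{\varphi(x)}.\]
Applied with $x=1/Y_{st}$, on the event $\{Y_{st}>1/\delta\}$ this gives
\[\left|\frac{1}{tF(1/Y_{st})}-\frac{1}{t\varphi(1/Y_{st})}\right|\leq \frac{\epsilon}{t\varphi(1/Y_{st})}.\]
On the complementary event $\{Y_{st}\leq 1/\delta\}$, the monotonicity of $F$ and $\varphi$ gives $F(1/Y_{st})\geq F(\delta)>0$ and $\varphi(1/Y_{st})\geq \varphi(\delta)>0$, so the difference is deterministically bounded by $\frac{1}{tF(\delta)}+\frac{1}{t\varphi(\delta)}$, which vanishes as $t\to\infty$. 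Combining both regimes and taking the supremum over $s\in[0,T]$ yields
\[\sup_{0\leq s\leq T}\left|\frac{1}{tF(1/Y_{st})}-\frac{1}{t\varphi(1/Y_{st})}\right|\leq \epsilon\sup_{0\leq s\leq T}\frac{1}{t\varphi(1/Y_{st})}+\frac{1}{tF(\delta)}+\frac{1}{t\varphi(\delta)}.\]

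To conclude, by \eqref{convergenceprocessvarphi} together with the fact that $M$ is \cadlag{} and a.s.\ finite on $[0,T]$, the family $\bigl(\sup_{0\leq s\leq T}\frac{1}{t\varphi(1/Y_{st})}\bigr)_{t>0}$ is tight: for any $\eta>0$ there exists $K_\eta$ such that this supremum is at most $K_\eta$ with probability at least $1-\eta$ for all $t$ large enough. Letting $t\to\infty$ first makes the deterministic terms negligible, and then letting $\epsilon\to 0$ forces the right-hand side below any prescribed threshold with probability at least $1-\eta$, proving \eqref{unifFphi}. The only mildly delicate point is the tightness of the sup functional, which is not continuous on the Skorokhod space at trajectories with a jump at $T$; this is handled by replacing $T$ with a slightly larger continuity point of $M$, such points being dense almost surely since $M$ has at most countably many jumps.
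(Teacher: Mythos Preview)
Your proof is correct and follows essentially the same strategy as the paper's: a two-case split (equivalence region versus bounded region) followed by tightness of the running supremum coming from the already-established convergence \eqref{convergenceprocessvarphi}. The paper splits according to whether $\frac{1}{t\varphi(1/y)}$ is below or above a threshold $\eta$ (a $t$-dependent cutoff on $y$) and obtains the pointwise bound $\frac{\eta}{t\varphi(1/y)}\vee 2\eta$; you split at a fixed threshold $Y_{st}\gtrless 1/\delta$ and get the additive bound $\epsilon\cdot\frac{1}{t\varphi(1/Y_{st})}+\frac{1}{tF(\delta)}+\frac{1}{t\varphi(\delta)}$. These are interchangeable, and your version is arguably cleaner since the deterministic remainder visibly vanishes in $t$. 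Your closing remark on the discontinuity of the sup functional at paths jumping at $T$ is also more explicit than the paper, which simply cites Whitt--Sleeman; note that for the limit ESN process $M$, fixed times are a.s.\ continuity points (the underlying PPP has diffuse time-intensity), so the citation covers the issue.
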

\begin{proof}
We first show that for any $\eta\in(0,1)$, we have 
%there exists $t_0>0$ such that 
for large enough $t$,
%$t\geq t_0$
\begin{equation}\label{eqn:phidiff} \left| \frac{1}{tF(1/y)} - \frac{1}{t\varphi(1/y)}\right|\leq \frac{\eta}{t\varphi(1/y)}\vee 2\eta,\quad \forall y\geq 0.\end{equation}
Let $\eta>0$ be fixed. Let $y\geq 0$. If $\frac{1}{t\varphi(1/y)}\leq \eta,$ then $1/y\geq \varphi^{-1}(1/t\eta)$, and since $F$ is increasing, $F(1/y)\geq F(\varphi^{-1}(1/t\eta))$. On the other hand, since $F$ and $\varphi$ are equivalent at $0$, for $t$ large enough,  $F(\varphi^{-1}(1/t\eta))\geq \frac{1}{2} \varphi(\varphi^{-1}(1/t\eta))=1/2t\eta$ and then
$$\frac{1}{tF(1/y)}\leq 2\eta.$$ 
We deduce plainly that in the case $\frac{1}{t\varphi(1/y)}\leq \eta$, the left hand side in \eqref{eqn:phidiff} is smaller than $2\eta$.

If now $\frac{1}{t\varphi(1/y)}\geq \eta$ (i.e. $\varphi(1/y)\leq \frac{1}{t\eta}$), then by taking $t$ large enough, we have $1/y$ small enough. Using the equivalence of $\varphi$ and $F$ at $0$,
%then 
large enough $t$ leads to
$$\frac{\varphi(1/y)}{F(1/y)}\in[1-\eta,1+\eta].$$
Factorising the left hand side of \eqref{eqn:phidiff} by $\frac{1}{t\varphi(1/y)}$, we easily deduce that it is smaller than $\frac{\eta}{t\varphi(1/y)}$.
Then we conclude that \eqref{eqn:phidiff} is proved. 
%If $\frac{1}{t\varphi(1/y)}\leq \eta,$ then using that $\varphi$ and $F$ are strictly increasing,the above display entails that for $t$ large enough, 
%$$\frac{1}{tF(1/y)}\leq 2\eta,$$ Then we conclude that \eqref{eqn:phidiff} is proved. 
Applying it to $y=Y_{st}$ provides
that for any $T>0$, $\eta\in (0,1)$,  for $t$ large enough,  
\begin{equation}\label{upperbounduniform}
\underset{0\leq s\leq T}{\sup} \left| \frac{1}{tF(1/Y_{st})} - \frac{1}{t\varphi(1/Y_{st})}\right| \leq \underset{0\leq s\leq T}{\sup} \frac{\eta}{t\varphi(1/Y_{st})}\vee 2\eta. \end{equation}
On the other hand, the weak convergence \eqref{convergenceprocessvarphi} on the time interval $[0,T]$ entails the convergence in law \[\underset{0\leq s\leq T}{\sup} \frac{1}{t\varphi(1/Y_{st})} \underset{t\rightarrow \infty}{\longrightarrow} \underset{0\leq s\leq T}{\sup} M_s<\infty,\] 
see e.g. Whitt and Sleeman \cite[Theorem 6.1]{zbMATH03667676}. This ensures that the upper bound in \eqref{upperbounduniform} is arbitrarily small and we have established \eqref{unifFphi}.
\qed 
\end{proof}
\noindent \underline{Denouement}: 
By choosing $F=\Phi$ which meets all requirements of Lemma \ref{lem:equivconvergence} and combining the weak convergence \eqref{convergenceprocessvarphi} with $\varphi$ and the uniform control \eqref{eqn:phidiff}, we get by applying \cite[Theorem 3.1]{zbMATH01354815},
that $\left(\frac{1}{t\Phi(1/Y_{st})},s\geq 0\right)$ converges weakly to the same limit process in the Skorokhod sense. The proof of Theorem \ref{thmdivergencecase} is finished. \qed

We explain now Remark \ref{remarknoconv}.
%\begin{remark}
    In the so-called Sub-log case, namely when $x\Phi(e^{-x})\underset{x\rightarrow \infty}{\longrightarrow} 0$, the convergence of generators in Section \ref{sec:convawayfrom0} fails, indeed the limiting drift term \eqref{pointwisedrift} (with $c=0$) blows up. There is therefore no weak convergence in this setting as convergence of generators is necessary, see \cite[Theorem 19.25]{Kallenberg}. 
%\end{remark}

\subsection{Proof of Theorem \ref{thmextremasubordinator}}

We start by establishing the second statement (ii) namely the convergence of the renormalised process in Skorokhod weak sense. The latter is obtained along similar arguments as in the CBI case. 
\subsubsection{Statement (ii): convergence in Skorokhod sense}
Recall that we work under the assumption $\mathbb{H}$.  By applying Lemma \ref{eqn:equifun} there is a strictly increasing function $\varphi$ equivalent to $\Phi$ at $0$ satisfying the properties listed in Lemma \ref{lem:twoderivezero}  and in Lemma \ref{lem: fastjumpdiff}, that is to say: by setting $g(y):=\frac{1}{t\varphi(1/y)}$, we have for any $c_0>0$,   
\begin{equation}\label{lemcombinedsub}
\bar\nu(g^{-1}(v)-g^{-1}(x))\underset{t\rightarrow \infty}{\longrightarrow} \bar{\mu}(v)=1/v \text{ uniformly in }v\geq x+c_0 \text{ with } x\geq 0, \end{equation} 
and
\begin{equation}\label{lemdriftsub} tg'(y)=\frac{\varphi'(1/y)}{y^2\varphi(1/y)^2}\underset{y\rightarrow \infty}{\longrightarrow} 0.
\end{equation}
 %We have seen in the proof of Lemma \ref{lem: fastjumpdiff}, see \eqref{equivPhinu}, that when $\Phi$ is slowly varying at $0$, $x\mapsto \bar{\nu}(x)$ is slowly varying at $\infty$.
 %Recall also from \eqref{fubinitonelli} that $\frac{\Phi(x)}{x}=\beta+\int_0^{\infty}e^{-xu}\bar{\nu}(u)\ddr u$. Hence one has by taking the derivative, 
%\[\left(\frac{\Phi(x)}{x}\right)'=\frac{\Phi'(x)}{x}-\frac{\Phi(x)}{x^2}=\int_0^{\infty}xe^{-xu}\bar{\nu}(x)\ddr x=:R(x).\]
%Since $x\mapsto x\bar{\nu}(x)$ is regularly varying at $\infty$ with index $1$, $z\mapsto\int_0^z x\bar{\nu}(x)\ddr x$ is regularly varying at $\infty$ with index $2$ and by Karamata Tauberian theorem, $R$ is regularly varying at $0$ with index $-2$. Moreover $\Phi'(x)=xR(x)+\Phi(x)/x$, and since $x^3R(x)/\Phi(x)^2$ is regularly varying at $0$ with index $1$, we have that
%\begin{align*}
    %\frac{\Phi'(x)}{\Phi^2(x)/x^2}&=\frac{x^3R(x)}{\Phi(x)^2}+\frac{x}{\Phi(x)} %\underset{x\rightarrow 0}\longrightarrow 0.
%\end{align*}
%\qed
%\end{proof}
We now study the convergence of the generators.  Recall $\mathrm L^{\Phi}$ the generator of the subordinator with Laplace exponent $\Phi$, see \eqref{immigrationpartgen} and $\mathcal{A}$ that of the extremal process $Z$ with $\bar{\mu}(x)=1/x$. Let $\mathcal{A}^{(t)}$ be the generator of the process $\left(g(Y_{st}),s\geq 0\right)$. Then for any $f\in C^{1,0}([0,\infty))\cap \mathcal{D}_{\mathrm{ESN}}$, $f\circ g$ belongs to the domain of the subordinator and $f$ to that of the extremal process and 
\begin{align*}\mathcal{A}^{(t)}f(x)&=t\mathrm L^{\Phi}(f\circ g)(g^{-1}(x))\\
&=t\int_0^\infty \left[(f\circ g)(g^{-1}(x)+u)-f(x)\right]\nu(\ddr u)+\beta t(f\circ g)'(g^{-1}(x))\\
&=t\int_x^\infty f'(v)\bar\nu(g^{-1}(v)-g^{-1}(x))\ddr v+\beta tf'(x)g'(g^{-1}(x)).
\end{align*}
For any $\epsilon>0$, the first term on the right side in the last equality converges to $\mathcal{A}f(x):=\int_x^\infty f'(v)\frac{\ddr v}{v}$ uniformly in $x\geq \epsilon$ using \eqref{lemcombinedsub} and the same method as in Step 2 in Section \ref{sec:convawayfrom0}. For the second term, we have by \eqref{lemdriftsub}
$$tg'(g^{-1}(x))=tg'(y)=\frac{\varphi'(1/y)}{y^2\varphi(1/y)^2}\underset{y\to\infty}{\longrightarrow}0.$$
Let $x\geq\epsilon$ and set $y:=g^{-1}(x)$,  since $y\geq g^{-1}(\epsilon)\underset{t\to\infty}{\longrightarrow}\infty$. The convergence above holds uniformly in $x\geq \epsilon>0$. For $0\leq x\leq \epsilon$, we can proceed similarly as in Section \ref{secconv0} for the CBI case and establish the uniform convergence for a function constant near $0$. Finally any function in $\mathcal{D}_{\mathrm{ESN}}$ satisfies $f'(0)=0$ and can thus be approximated by a $C^1$ function constant near zero, see e.g. \cite[Lemma 2]{FoucartYuanESN23}. We conclude that $\mathcal{A}^{(t)}f(x)$ converges uniformly in $x\geq 0$ to $\mathcal{A}f(x)$ as $t$ goes to $\infty$ for any $f\in \mathcal{D}_{\mathrm{ESN}}$ (which is a core for the extremal process, see Lemma \ref{lem:coreESN}).  As previously, \cite[Theorem 19.25-(i-iv)]{Kallenberg} ensures the weak convergence 
\begin{equation*}
\left(\frac{1}{t\varphi(1/Y_{st})},s\geq 0\right)\underset{t\rightarrow \infty}{\Longrightarrow} (M_s,s\geq 0).
\end{equation*}
Finally, the fact that we can replace $\varphi$ by any increasing function $F$ equivalent to $\varphi$ at $0$ (in particular, one can take $F=\Phi$) follows from the same argument as in Lemma \ref{lem:equivconvergence}.
\qed
\subsection{Statement (i): convergence in finite dimensional sense.}
Recall, see e.g. \cite{Res87}, that if $M$ is an extremal process based on a PPP whose intensity has for tail $\bar{\mu}(x)=1/x$ then, for any $0\leq s_1\leq ...\leq s_n$ and $(x_1,...,x_n) \in \mathbb{R}^n$,
\begin{equation}\label{extremalprocess}
\mathbb{P}(M_{s_1}\leq y_1,M_{s_2}\leq y_2,...,M_{s_n}\leq y_n)=F^{s_1}(y'_1)F^{s_2-s_1}(z'_2)...F^{s_n-s_{n-1}}(y'_n)
\end{equation}
where $y'_i=\wedge_{k=i}^{n} y_{k}$ for all $i\geq 1$ and $F$ is the probability distribution function $F(y)=e^{-1/y}$ for $y\in (0,\infty)$.\\
We start by the two-dimensional convergence. The multi-dimensional convergence is explained in Step 2 and follows essentially from the same argument. 

\noindent \underline{Step 1}: two-dimensional convergence. Let $\theta_1,\theta_2>0$, $s_2>s_1>0$ and $x_1,x_2>0$.
\begin{align*}
\mathbb{E}[e^{-\theta_1\Phi^{-1}\big(x_1/t\big)Y_{s_1t}}e^{-\theta_2\Phi^{-1}\big(x_2/t\big)Y_{s_2t}}]&=\mathbb{E}[e^{-\big(\theta_1\Phi^{-1}\big(x_1/t\big)+ \theta_2\Phi^{-1}\big(\frac{x_2}{t}\big)\big)Y_{s_1t}}e^{-\theta_2\Phi^{-1}\big(\frac{x_2}{t}\big)(Y_{s_2t}-Y_{s_1t})}]\\
&=\mathbb{E}[e^{-\big(\theta_1\Phi^{-1}\big(x_1/t\big)+ \theta_2\Phi^{-1}\big(\frac{x_2}{t}\big)\big)Y_{s_1t}}]\mathbb{E}[e^{-\theta_2\Phi^{-1}\big(\frac{x_2}{t}\big)(Y_{(s_2-s-1)t})}]\\
&=e^{-s_1t\Phi\big(\theta_1\Phi^{-1}(x_1/t)+\theta_2\Phi^{-1}(x_2/t)\big)}e^{-(s_2-s_1)t\Phi\big(\theta_2\Phi^{-1}(x_2/t)\big)}.
\end{align*}
We now study the limit as $t$ goes to $\infty$. Since $\Phi$ is slowly varying at $0$, and for any $\lambda>0$, $\Phi^{-1}(\lambda/t)\longrightarrow 0$ as $t\to \infty$, we have 
\begin{equation}\label{eqn:phithetaphi}\frac{t}{x_2}\Phi(\theta \Phi^{-1}(x_2/t))=\frac{\Phi(\theta \Phi^{-1}(x_2/t))}{\Phi( \Phi^{-1}(x_2/t))}\underset{t\rightarrow \infty}{\longrightarrow} 1.\end{equation}
Thus $(s_2-s_1)t\Phi\big(\theta_2\Phi^{-1}(x_2/t)\big)\underset{t\rightarrow \infty}{\longrightarrow} (s_2-s_1)x_2$. Moreover $\Phi$ is an increasing slowly varying function, therefore the inverse function $\Phi^{-1}$ is increasing and rapidly varying at $0$, namely, 
\begin{equation}\label{eqn:phiorder}\text{if } 
 x_2>x_1, \text{ then } \Phi^{-1}(x_2/t)/\Phi^{-1}(x_1/t)\underset{t\rightarrow \infty}{\longrightarrow} 0.\end{equation}
Hence, we see that $$t\Phi\big(\theta_1\Phi^{-1}(x_1/t)+\theta_2\Phi^{-1}(x_2/t)\big)\underset{t\rightarrow \infty}{\sim} t\Phi\big(\Phi^{-1}(x_1\vee x_2/t)\big)= x_1\vee x_2.$$
Therefore
\begin{align*}
\underset{t\rightarrow \infty}{\lim} \mathbb{E}[e^{-\theta_1\Phi^{-1}\big(x_1/t\big)Y_{s_1t}}e^{-\theta_2\Phi^{-1}\big(x_2/t\big)Y_{s_2t}}]&=\exp\big(-s_1x_1\vee x_2-(s_{2}-s_{1})x_2\big).
\end{align*}
 The convergence of the Laplace transform with $\theta_1,\theta_2 >0$ arbitrary provides that  $$\left(\Phi^{-1}\big(x_1/t\big)Y_{s_1t},\Phi^{-1}\big(x_2/t\big)Y_{s_2t}\right)\underset{t\rightarrow \infty}{\longrightarrow} (W^{x_1}(s_1),W^{x_2}(s_2)) \text{ in law}.$$
Moreover since the limit does not depend on $\theta_1$ and $\theta_2$, the limiting distribution is degenerated (i.e. is supported by $\{0,\infty\}^{2}$) and $$\mathbb{P}(W^{x_1}(s_1)=0,W^{x_2}(s_2)=0)=\exp\big(-s_1x_1\vee x_2-(s_{2}-s_{1})x_2\big).$$ 
To conclude on the convergence of the two-dimensional laws, observe that
\begin{align*}
\mathbb{P}(t\Phi(1/Y_{s_1t})\geq x_1,t\Phi(1/Y_{s_2t})\geq x_2)&=\mathbb{P}(Y_{s_1t}\leq 1/\Phi^{-1}(x_1/t),Y_{s_2t}\leq 1/\Phi^{-1}(x_2/t))\\
&=\mathbb{P}(\Phi^{-1}(x_1/t)Y_{s_1t}\leq 1,\Phi^{-1}(x_2/t)Y_{s_2t}\leq 1)\\
 &\quad \underset{t\rightarrow \infty}{\longrightarrow}\exp\big(-s_1x_1\vee x_2-(s_{2}-s_{1})x_2\big).
\end{align*}
Finally, by setting $y_i=1/x_i$ for $i=1,2$ we get
\begin{align*}
\mathbb{P}\left(\frac{1}{t\Phi(1/Y_{s_1t})}\leq y_1,\frac{1}{t\Phi(1/Y_{s_2t})}\leq y_2\right)\underset{t\rightarrow \infty}{\longrightarrow} \mathbb{P}(M(s_1)\leq y_1,M(s_2)\leq y_2)=e^{-s_1\frac{1}{y_1}\wedge \frac{1}{y_2}}e^{-(s_2-s_1)\frac{1}{y_2}}
\end{align*}
which matches with \eqref{extremalprocess} for $n=2$.\\

\noindent \underline{Step 2}: multi-dimensional convergence. Let $n\in\N.$ Let $0=s_0<s_1<s_2<\cdots<s_n$, $x_1>0,\ldots,x_n>0$ and $\theta_1,\ldots, \theta_n>0$. Then we have 

\begin{align}
\mathbb{E}\left[\prod_{j=1}^n e^{-\theta_j\Phi^{-1}\big(x_j/t\big)Y_{s_jt}}\right]&=\E\left[\exp\left(-\sum_{i=1}^n\left(\sum_{j=i}^n \theta_j\Phi^{-1}\big(x_j/t\big)\right)(Y_{s_it}-Y_{s_{i-1}t})\right)\right] \nonumber\\
=\exp&\left( -\sum_{i=1}^n(s_i-s_{i-1})t\Phi\left(\sum_{j=i}^n \theta_j\Phi^{-1}\big(x_j/t\big)\right)\right)\nonumber\\
&\underset{t\rightarrow \infty}{\longrightarrow} \exp\left( -(s_n-s_{n-1})x_n-\sum_{i=1}^{n-1}(s_i-s_{i-1})(x_i\vee x_{i+1}\vee\cdots\vee x_n) \right)\nonumber\\
 &\qquad= \mathbb{P}(W^{x_1}(s_1)=0,W^{x_2}(s_2)=0,\cdots, W^{x_n}(s_n)=0), \label{multidimW}
\end{align}
where we used repeatedly \eqref{eqn:phithetaphi} and \eqref{eqn:phiorder} to get the limit and $W^{x_j}(s_j)$ stands for a random variable valued in $\{0,\infty\}$ limit in law of $\Phi^{-1}\big(x_j/t\big)Y_{s_jt}$ as $t$ goes to $\infty$. Now let $y_j=1/x_j$ and write
\[\bigcap_{j=1}^{n}\{1/t\Phi(1/Y_{s_jt})\leq y_j\}=\bigcap_{j=1}^{n}\{1\geq \Phi^{-1}(x_j/t)Y_{s_jt}\}.\]
We see that
\begin{align*}
\mathbb{P}(M_{s_1}\leq y_1,\ldots, M_{s_n}\leq y_n)&=\underset{t\rightarrow \infty}{\lim} \mathbb{P}\left(\bigcap_{j=1}^{n}\{1\geq \Phi^{-1}(x_j/t)Y_{s_jt}\}\right)=\mathbb{P}(W^{x_1}(s_1)=0,\cdots, W^{x_n}(s_n)=0),
\end{align*}
and the identity \eqref{multidimW} matches with \eqref{extremalprocess}. This allows us to conclude. \qed

\textbf{Acknowledgements} The authors thank Chunhua Ma for discussions. C.F and L.Y are supported  by the French National Research Agency (ANR): LABEX MME-DII (ANR11-LBX-0023-01). C.F is supported by the European Union (ERC, SINGER, 101054787). Views and opinions expressed are however those of the authors only and do not necessarily reflect those of the European Union or the European Research Council. Neither the European Union nor the granting authority can be held responsible for them.
\bibliographystyle{amsalpha}

%\bibliography{doku}

\begin{thebibliography}{KRM12}

\bibitem[BGT89]{regularvariation}
N.~H. Bingham, C.~M. Goldie, and J.~L. Teugels, \emph{Regular variation},
  Encyclopedia of Mathematics and its Applications, vol.~27, Cambridge
  University Press, Cambridge, 1989. \MR{1015093 (90i:26003)}

\bibitem[Bil99]{zbMATH01354815}
Patrick Billingsley, \emph{Convergence of probability measures.}, 2nd ed. ed.,
  Wiley Ser. Probab. Stat., Chichester: Wiley, 1999 (English).

\bibitem[DFM14]{Duhalde}
Xan Duhalde, Cl{\'e}ment Foucart, and Chunhua Ma, \emph{On the hitting times of
  continuous-state branching processes with immigration}, Stochastic Process.
  Appl. \textbf{124} (2014), no.~12, 4182--4201. \MR{3264444}

\bibitem[Dom12]{Dombry}
Cl\'ement Dombry, \emph{Extremal shot noises, heavy tails and max-stable random
  fields}, Extremes \textbf{15} (2012), 129–158.

\bibitem[FM19]{foucart2019}
Cl\'{e}ment Foucart and Chunhua Ma, \emph{Continuous-state branching processes,
  extremal processes and super-individuals}, Ann. Inst. H. Poincar\'{e} Probab.
  Statist. \textbf{55} (2019), no.~2, 1061--1086.

\bibitem[FMY22]{zbMATH07549543}
Cl{\'e}ment Foucart, Chunhua Ma, and Linglong Yuan, \emph{Limit theorems for
  continuous-state branching processes with immigration}, Adv. Appl. Probab.
  \textbf{54} (2022), no.~2, 599--624 (English).

\bibitem[FY23]{FoucartYuanESN23}
Clément Foucart and Linglong Yuan, \emph{Extremal shot noise processes and
  random cutout sets}, ArXiv 2302.03082 (2023).

\bibitem[IK18]{IKSANOV2018291}
Alexander Iksanov and Zakhar Kabluchko, \emph{Functional limit theorems for
  {G}alton–{W}atson processes with very active immigration}, Stochastic
  Processes and their Applications \textbf{128} (2018), no.~1, 291 -- 305.

\bibitem[Kal02]{Kallenberg}
Olav Kallenberg, \emph{Foundations of modern probability}, second ed.,
  Probability and its Applications (New York), Springer-Verlag, New York, 2002.
  \MR{1876169 (2002m:60002)}

\bibitem[KL13]{KELLA20133122}
Offer Kella and Andreas Löpker, \emph{Weak convergence of subordinators to
  extremal processes}, Stochastic Processes and their Applications \textbf{123}
  (2013), no.~8, 3122 -- 3131.

\bibitem[KRM12]{KELLERRESSEL20122329}
Martin Keller-Ressel and Aleksandar Mijatovi\'{c}, \emph{On the limit
  distributions of continuous-state branching processes with immigration},
  Stochastic Processes and their Applications \textbf{122} (2012), no.~6, 2329
  -- 2345.

\bibitem[KW71]{KAW}
Kiyoshi Kawazu and Shinzo Watanabe, \emph{Branching processes with immigration
  and related limit theorems}, Teor. Verojatnost. i Primenen. \textbf{16}
  (1971), 34--51. \MR{0290475 (44 \#7656)}

\bibitem[Li20]{zbMATH07273637}
Zenghu Li, \emph{Continuous-state branching processes with immigration}, From
  probability to finance. Lecture notes of BICMR summer school on financial
  mathematics, Beijing International Center for Mathematical Research, Beijing,
  China, May 29 -- June 9, 2017, Singapore: Springer, 2020, pp.~1--69
  (English).

\bibitem[MS19]{MALLER20194144}
Ross Maller and Tanja Schindler, \emph{Small time convergence of subordinators
  with regularly or slowly varying canonical measure}, Stochastic Processes and
  their Applications \textbf{129} (2019), no.~10, 4144 -- 4162.

\bibitem[Pak79]{Pakes}
Anthony~G. Pakes, \emph{Limit theorems for the simple branching process
  allowing immigration. {I}. {T}he case of finite offspring mean}, Adv. in
  Appl. Probab. \textbf{11} (1979), no.~1, 31--62. \MR{517550 (80d:60114a)}

\bibitem[Pat09]{Patie}
Pierre Patie, \emph{Exponential functional of a new family of {L}\'evy
  processes and self-similar continuous state branching processes with
  immigration}, Bull. Sci. Math. \textbf{133} (2009), no.~4, 355--382.
  \MR{2532690 (2010j:60118)}

\bibitem[Pin72]{Pinsky}
Mark~A. Pinsky, \emph{Limit theorems for continuous state branching processes
  with immigration}, Bull. Amer. Math. Soc. \textbf{78} (1972), 242--244.
  \MR{0295450 (45 \#4516)}

\bibitem[PS98]{zbMATH01257769}
George P{\'o}lya and Gabor Szeg{\H{o}}, \emph{Problems and theorems in
  analysis. {I}. {Series}, integral calculus, theory of functions. {Transl}.
  from the {German} by {Dorothee} {Aeppli}}, reprint of the 1978 {English}
  translation ed., Class. Math., Berlin: Springer, 1998 (English).

\bibitem[Res08]{Res87}
Sidney~I. Resnick, \emph{Extreme values, regular variation and point
  processes}, Springer series in operations research and financial engineering,
  Springer, New York, 2008, Applied probability, vol. 4.

\bibitem[RY99]{RevuzYor}
Daniel Revuz and Marc Yor, \emph{Continuous martingales and {B}rownian motion},
  third ed., Grundlehren der Mathematischen Wissenschaften [Fundamental
  Principles of Mathematical Sciences], vol. 293, Springer-Verlag, Berlin,
  1999. \MR{1725357 (2000h:60050)}

\bibitem[WS80]{zbMATH03667676}
Ward Whitt and B.~D. Sleeman, \emph{Some useful functions for functional limit
  theorems}, Math. Oper. Res. \textbf{5} (1980), 67--85 (English).

\end{thebibliography}
%
%
%\bibitem[Gre74]{Grey}
%Grey, D.R. {Asymptotic behavior of continuous time, continuous
%  state-space branching processes}, J. Appl. Probab. \textbf{11} (1974),
%  669--677.
%%
%%
%%
%
%
%
%
%
%
%
%
%
%
%
%
%
%

\end{document}